\newtheorem{thrm}{Theorem}
\newtheorem{defi}{Definition}[subsection]
\newtheorem{theorem}[defi]{Theorem}
\newtheorem{definition}[defi]{Definition}
\newtheorem{lemma}[defi]{Lemma}
\newtheorem{proposition}[defi]{Proposition}
\newtheorem{conjecture}[defi]{Conjecture}
\newtheorem{corollary}[defi]{Corollary}
\newtheorem{problem}[defi]{Problem}
\newtheorem{remark}[defi]{Remark}
\newtheorem{example}[defi]{Example}
\def\Aut{\mathop{\rm Aut}\nolimits}
\def\Min{\mathop{\rm Min}\nolimits}
\def\Max{\mathop{\rm Max}\nolimits}
\def\Sym{\mathop{\rm Sym}\nolimits}
\begin{document}

\author{Amir Assari,\\ Department of Basic Science,\\ Jundi-Shapur University of Technology,\\ Dezful, Iran
\and Narges Hosseinzadeh,\\ Department of Mathematics, Dezful Branch,\\ Islamic Azad University,\\ Dezful, Iran
\and Dugald Macpherson,\\School of Mathematics,\\University of Leeds,\\Leeds LS2 9JT, UK}

\title{ Set-homogeneous hypergraphs}

\maketitle

%%%%%%%%%%%%%%%%%%%%%%%%%%%%%%%%%%5

\begin{abstract} A $k$-uniform hypergraph $M$ is {\em set-homogeneous} if it is countable (possibly finite) and whenever two finite induced subhypergraphs $U,V$ are isomorphic there is $g\in \Aut(M)$ with $U^g=V$; the hypergraph $M$ is said to be {\em homogeneous} if in addition {\em every} isomorphism between finite induced subhypergraphs extends to an automorphism. 
We give four examples of countably infinite set-homogeneous $k$-uniform hypergraphs which are not homogeneous (two with $k=3$, one with $k=4$, and one with $k=6$). Evidence is also given that these may be the only ones, up to complementation. For example, for $k=3$ there is just one countably infinite $k$-uniform hypergraph whose automorphism group is not 2-transitive, and there is none for $k=4$. We also give an example of a {\em finite} set-homogeneous 3-uniform hypergraph which is not homogeneous.
\end{abstract}

\section{Introduction}
\subsection{Background}
A very rich theory has developed around the notions of {\em  Fra\"iss\'e amalgamation} and {\em homogeneous structure} in the sense of Fra\"iss\'e. At the most basic level the notions concern countable structures over finite relational languages, but the concepts make good sense for languages with function and constant symbols, and there are category-theoretic versions, versions with inverse rather than direct limits, and formulations of amalgamation with respect to a specified class of embeddings, leading to `Hrushovski constructions'. The subject has important connections to permutation group theory, model theory, combinatorial enumeration, Ramsey theory, topological dynamics, and constraint satisfaction. In certain specified binary contexts, there are classification theorems of the homogeneous structures: for example for partial orders \cite{schmerl}, coloured partial orders \cite{truss}, graphs \cite{lachwoodrow}, digraphs \cite{cherlin}, `finite-dimensional permutation structures' (structures in a language with finitely many total order symbols) \cite{braunfeld}, and metrically homogeneous graphs of diameter 3 \cite{amato}. However, there is currently very little (beyond \cite{akhtar}) in the way of classification theorems for homogeneous structures where relation symbols have  arity greater than two, and evidence from binary classifications suggests such results will be very difficult. 

In this paper we focus on $k$-uniform hypergraphs (from now on, called just {\em $k$-hypergraphs}) with $k\geq 3$, and on a slight weakening of homogeneity. Let $L$ be a relational language, and 
let  $t\in \mathbb{N}$ with $t\geq 1$. A relational structure $M$ over $L$  is {\em $t$-set-homogeneous} if $|M|\leq \aleph_0$ (we assume this throughout) and, whenever $U,V\subset M$ are of size $t$ and carry isomorphic substructures, there is $g\in \Aut(M)$ with $U^g=V$; it is {\em ${\leq}t$-set-homogeneous} if it is $s$-set-homogeneous for all $s\leq t$. The structure $M$ is {\em set-homogeneous} if it is $t$-set-homogeneous for all $t\in \mathbb{N}$ with $t>0$. We say a countable structure $M$ is {\em $t$-homogeneous} if {\em any} isomorphism between substructures of $M$ of size  $t$ extends to an automorphism of $M$, and {\em ${\leq}t$-homogeneous} if it is $s$-homogeneous for all $s\leq t$; the structure $M$ is {\em homogeneous} if it is $t$-homogeneous for all $t>0$. A $t$-set-homogeneous structure $M$ is $t$-homogeneous if and only if, for each $A\subset M$ of size $t$, the group induced on $A$ be the setwise stabiliser of $A$ in $\Aut(M)$ is precisely the full automorphism group of the structure induced  on $A$.

We are not sure of the history of set-homogeneity, but there is some discussion of the notion in Section 8 of \cite[Chapter 11]{fraisse}. We remark that Hall's universal locally finite group is often described as the unique countably infinite locally finite group which embeds all finite groups and has the property that any two finite isomorphic subgroups are conjugate. This is a set-homogeneity condition, but in this specific group-theoretic context it  implies the stronger homogeneity condition: any isomorphism between finite subgroups of $H$ is induced by some inner automorphism; see \cite[Lemma 3]{hall}.  

{\em Finite} homogeneous graphs were classified independently by Gardiner \cite{gardiner} and by Golfand and Klin \cite{golfand}. It was shown by Ronse \cite{ronse} that any finite {\em set-homogeneous} graph is homogeneous, and Enomoto \cite{enomoto} gave a very short direct proof of this. Enomoto's argument was shown in \cite[Lemma 3.1]{gray+} to work for finite tournaments, but not for finite digraphs (a directed 5-cycle is set-homogeneous but not homogeneous). Finite set-homogeneous directed graphs (allowing pairs with an arc in each direction) were classified in \cite{gray+}, and \cite{zhou} initiates an analysis of {\em finite} 3-set-homogeneous graphs.  

There is little literature so far on {\em infinite} set-homogeneous structures. Set-homogeneous graphs are considered in \cite{droste+}, where a specific countable infinite graph $R(3)$ related to a circular order is shown to be set-homogeneous but not homogeneous, and it is shown that any countably infinite graph which is ${\leq}8$-set-homogeneous but not ${\leq}3$-homogeneous  is isomorphic to $R(3)$ or its complement. Likewise, in \cite{gray+}, a classification is given of countably infinite set-homogeneous digraphs (not allowing pairs with arcs in both directions) which are not 2-homogeneous. Some  papers of Cameron on multiply-homogeneous permutation groups (\cite{cameron76}, \cite{cameronII}, \cite{cameron4}) have similar flavour -- see Section 2.2 for more on this. The paper \cite{droste-trans} contains several classification results for countable partially ordered sets which are $k$-set-homogeneous (called {\em $k$-transitive} in \cite{droste-trans}) but not $k$-homogeneous and do not contain the pentagon. In a similar spirit, \cite{gray} includes a classification of locally finite graphs with more than one end which are {\em 3-CS-transitive} (whenever $U$, $V$ are {\em connected} induced subgraphs of size at most 3  which are isomorphic, there is an automorphism $g$ with $U^g=V$).

\subsection{The main results}
In this paper, we construct four specific infinite hypergraphs and prove they are set-homogeneous. We then characterise some of these examples up to complementation by low degree transitivity and primitivity  properties of their automorphism group. Before stating our main results, we recall that a permutation group $G$ on a set $X$ is {\em $k$-homogeneous} if it is transitive on the collection of unordered $k$-subsets of $X$, and is {\em $k$-transitive} if it is transitive on the ordered $k$-subsets. (Note a slight inconsistency which seems to be established in the literature; for structures, homogeneity is the stronger of the two conditions  $k$-homogeneity/$k$-set-homogeneity, but for permutation groups, it is the weaker of the conditions $k$-homogeneity/$k$-transitivity.) We say $G$ is {\em $k$-primitive} on $X$ if it is $k$-transitive and for any distinct $x_1,\ldots,x_{k-1}\in X$ the stabiliser $G_{x_1\ldots x_{k-1}}$ acts {\em primitively} on $X\setminus \{x_1,\ldots,x_{k-1}\}$, that is, preserves no proper non-trivial equivalence relation. 

Our main theorems are the following. We stress the assumption throughout that set-homogeneity conditions for us imply by definition that a structure is countable, so our structures are throughout assumed to be countable.
\begin{thrm}\label{3hyper}
\begin{enumerate}
\item[(i)] There is an infinite set-homogeneous but not homogeneous 3-hypergraph $M_3$ whose automorphism group is not 2-transitive, and any infinite ${\leq}4$-set-homogeneous 3-hypergraph whose automorphism group is not 2-transitive is isomorphic to $M_3$.
\item[(ii)] There is an infinite set-homogeneous 3-hypergraph $N_3$ which is 2-homogeneous (so has 2-transitive auomorphism group) but is not 3-homogeneous. 
\end{enumerate}
\end{thrm}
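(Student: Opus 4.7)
My plan for both parts is to realise $M_3$ and $N_3$ as Fra\"iss\'e limits of amalgamation classes of finite $3$-hypergraphs carrying \emph{definable} auxiliary structure: for $M_3$, a $2$-colouring of vertices; for $N_3$, a cyclic orientation on triples. In each case the auxiliary data must be recoverable from the pure $3$-hypergraph, so that the resulting object is genuinely a $3$-hypergraph and no extra signature is used. To build $M_3$, I will introduce a class $\mathcal{C}_3$ of finite $2$-coloured $3$-hypergraphs in which the hyperedge relation on a triple is specified as a function of the colour pattern of its vertices, with further constraints on $4$-subsets chosen so that the colouring is recoverable from the pure hypergraph. After verifying joint embedding and amalgamation for $\mathcal{C}_3$ (amalgamating freely, filling in hyperedges on new triples by the colour rule), the Fra\"iss\'e limit gives $M_3$. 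The two colour classes are then orbits of $\Aut(M_3)$, giving failure of $2$-transitivity; set-homogeneity holds because any abstract isomorphism of finite induced sub-hypergraphs can be corrected by a global colour swap to a colour-preserving isomorphism, which then extends by Fra\"iss\'e homogeneity; failure of full homogeneity is exhibited by a small sub-hypergraph admitting an abstract colour-swapping automorphism that does not extend to $M_3$.

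For $N_3$, I will use an analogous class $\mathcal{D}_3$ of finite $3$-hypergraphs whose hyperedges come equipped with a definable cyclic orientation which is invisible at the level of pairs. In the Fra\"iss\'e limit, $\Aut(N_3)$ acts transitively on pairs (hence is $2$-transitive and $2$-homogeneous), but each hyperedge has a canonical cyclic orientation preserved by automorphisms, so an abstract isomorphism between two $3$-subsets that reverses orientation does not extend, violating $3$-homogeneity. Set-homogeneity follows as before: isomorphisms between finite induced sub-hypergraphs preserve the orientation pattern up to a single global reflection, and a reflection-adjusted isomorphism extends by Fra\"iss\'e homogeneity.

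For the uniqueness statement in (i), suppose $M'$ is an infinite ${\leq}4$-set-homogeneous $3$-hypergraph with non-$2$-transitive $\Aut(M')$. The multiple orbits of $\Aut(M')$ on pairs define a non-trivial invariant partition of pairs; combined with $3$-set- and $4$-set-homogeneity, this severely constrains the local isomorphism types of $4$-subsets. The plan is to enumerate all $4$-subset types consistent with such a partition, check that they coincide with the $4$-subset types realised in $M_3$, and conclude $M' \cong M_3$ by a back-and-forth argument. The main obstacle will be this enumeration step: with only ${\leq}4$-set-homogeneity available (rather than full set-homogeneity), the entire global structure must be pinned down from purely $4$-local data, and extracting a consistent vertex $2$-colouring on $M'$ from this local data is the delicate heart of the argument.
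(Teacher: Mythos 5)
Your plan for $M_3$ founders on the choice of auxiliary structure. A vertex $2$-colouring whose colour classes are orbits of $\Aut(M_3)$ is incompatible with set-homogeneity: $1$-set-homogeneity already forces $\Aut(M_3)$ to be transitive on vertices, so the classes cannot be distinct orbits; and if the colouring is merely setwise invariant (classes swapped by some automorphisms), then monochromatic versus bichromatic pairs give two invariant classes of unordered pairs, contradicting $2$-set-homogeneity. The only invariant that can witness failure of $2$-transitivity here is a tournament on ordered pairs, and Lemma~\ref{linear} shows this tournament must in fact be a linear order. The actual $M_3$ is the reduct of the homogeneous strongly dense $2$-regular ordered $C$-set $(M,C,\leq)$ of Theorem~\ref{2-reg}, with $xyz$ an edge exactly when $C(x;y,z)$ holds for the $\leq$-least element $x$ of the triple; the auxiliary data you need is this treelike order-plus-$C$-relation, whose finite substructures do not amalgamate freely. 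There is also a structural problem with ``verify amalgamation and take the Fra\"iss\'e limit'': if your class had genuine AP in the pure hypergraph language, its limit would be \emph{homogeneous}, whereas set-homogeneity without homogeneity corresponds to the weaker twisted amalgamation property. The viable version of your idea is exactly the paper's: take the Fra\"iss\'e limit in the \emph{expanded} language, prove the expansion is $\emptyset$-definable from $E$ alone (so $\Aut(M,E)=\Aut(M,C,\leq)$), and then show directly that isomorphic finite subhypergraphs carry isomorphic $(C,\leq)$-structures.

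For $N_3$ the invariant on an edge is not a cyclic orientation but a distinguished \emph{middle} vertex: the paper defines $N_3$ from the dense local order $T=(Z,\to)$ on the circle, the edges being triples contained in an open half-circle, and the setwise stabiliser of an edge induces only the transposition fixing the middle point (the relation $R(x;y,z)$ is symmetric in $y,z$). Your sketch does not establish that a set-homogeneous $3$-hypergraph whose edges carry a canonical cyclic orientation exists at all, and it omits the delicate point that $\Aut(N_3)$ is the index-$2$ overgroup $H=\langle\Aut(T),\gamma\rangle$ rather than $\Aut(T)$ itself --- this is precisely what makes $N_3$ $2$-transitive. For the uniqueness clause of (i), enumerating the edge-distributions on $4$-sets is indeed the engine (the paper's $P^i_J$ and $S_{ij}$ analysis in Lemma~\ref{twocases}), but it has to be carried out relative to the invariant linear order and then used to \emph{define} a ternary relation $C$ on $M'$ and verify axioms (C1)--(C8); the isomorphism with $M_3$ then comes from the back-and-forth uniqueness of the strongly dense $2$-regular $(C,\leq)$-set, not from $4$-local data alone. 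Aiming the reconstruction at a vertex $2$-colouring cannot succeed, for the reasons above.
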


\begin{thrm}\label{4hyper}
\begin{enumerate}
\item[(i)] Any infinite ${\leq}5$-set-homogeneous 4-hypergraph  has 2-transitive automorphism group.
\item[(ii)] There is an infinite set-homogeneous but not homogeneous 4-hypergraph $M_4$ whose automorphism group is not 2-primitive.
\item[(iii)] If $M$ is an infinite ${\leq}5$-set-homogeneous 4-hypergraph such that $\Aut(M)$ is not 2-primitive then either $M$ is isomorphic to $M_4$ or its complement, or $\Aut(M)$ preserves a linear betweenness relation on $M$.

\end{enumerate}
\end{thrm}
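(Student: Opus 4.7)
For part (i), I first note that every 2-subset of a 4-hypergraph induces the empty sub-hypergraph, so 2-set-homogeneity is equivalent to ordinary 2-homogeneity of $\Aut(M)$ as a permutation group. Assume for contradiction that $\Aut(M)$ is not 2-transitive. By Cameron's dichotomy for infinite 2-homogeneous permutation groups, $\Aut(M)$ then preserves a linear order $<$ on $M$; combined with 2-homogeneity this forces $(M,<)\cong(\mathbb{Q},<)$ and $\Aut(M)\leq\Aut(\mathbb{Q},<)$. I would then exploit 4- and 5-set-homogeneity: since $\Aut(M)$ preserves $<$, any $\Aut(M)$-orbit on a $k$-subset is determined by the underlying ordered structure, and all 4-subsets have a unique order type. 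By extending an edge $\{a<b<c<d\}$ by a fifth point placed in each of the five possible positions with respect to the order, one produces five 5-sets of the same isomorphism type (as unordered subhypergraphs) but with the edge in different positions relative to the order; ${\leq}5$-set-homogeneity together with order-invariance forces these patterns to coincide, which is contradictory unless the hypergraph is empty or complete. In either of those cases $\Aut(M)=\Sym(M)$ is 2-transitive, contradicting the assumption.

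For part (ii), I would construct $M_4$ explicitly. Let $M=A\sqcup B$ with $|A|=|B|=\aleph_0$, and let the edges be the 4-subsets $U\subseteq M$ with $|U\cap A|$ even (equivalently $|U\cap A|\in\{0,2,4\}$). Then $\Aut(M)$ contains $\Sym(A)\times\Sym(B)$ together with the swap $A\leftrightarrow B$, so it is 2-transitive on $M$ but preserves the bipartition $\{A,B\}$, giving a non-trivial invariant equivalence relation on $M\setminus\{x\}$ with two classes, hence is not 2-primitive. A finite induced sub-hypergraph is determined up to isomorphism by the multiset $\{|U\cap A|,|U\cap B|\}$, and a standard back-and-forth using $\Sym(A)\times\Sym(B)$ and the swap yields set-homogeneity. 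Non-homogeneity is witnessed, for example, by the isomorphism of the 4-subset $\{a_1,a_2,b_1,b_2\}$ that swaps $a_1\leftrightarrow b_1$ while fixing $a_2,b_2$: this preserves the induced hypergraph but has no global extension, because any automorphism of $M$ must act uniformly on the pair $\{A,B\}$.

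For part (iii), from part (i) I know $\Aut(M)$ is 2-transitive; I assume it is not 2-primitive. Then for any $x\in M$, the stabilizer $G_x$ preserves a proper non-trivial equivalence relation $\sim_x$ on $M\setminus\{x\}$, and by 2-transitivity the family $\{\sim_x : x\in M\}$ is $\Aut(M)$-equivariant. I would analyze the number of $\sim_x$-classes and the compatibility of the relations $\sim_x$ and $\sim_y$ for $x\neq y$, using ${\leq}5$-set-homogeneity and the 4-arity of the hypergraph relation to pin down rigidity. The analysis is expected to split into two cases: either (A) the $\sim_x$-classes are induced by a single global bipartition $M=A\sqcup B$, in which case identifying edges via 4-set-homogeneity forces $M\cong M_4$ or its complement; or (B) the two classes at $x$ correspond to the two "sides" of $x$ in an invariant linear-like structure, giving an $\Aut(M)$-invariant linear betweenness relation. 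Intermediate options with more classes, or with inconsistent class labellings at different basepoints, should be ruled out by carefully extending local data to 5-subsets and invoking ${\leq}5$-set-homogeneity.

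The main obstacle is the casework in part (iii): exhaustively excluding configurations other than (A) and (B), while extracting enough rigidity from the 4-arity of the edge relation to pin down the hypergraph once the invariant equivalence structure is fixed. The difficulty is that each $\sim_x$ is a unary datum from $x$'s perspective, so forcing consistency across varying basepoints requires genuinely using 5-set-homogeneity rather than just 3- or 4-set-homogeneity. By contrast, the back-and-forth verification in part (ii) is routine, and part (i) reduces to a finite combinatorial check once the invariant linear order is established.
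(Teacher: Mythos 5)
Your proposal breaks down most seriously at part (ii). The bipartite construction you give ($M=A\sqcup B$, edges the $4$-sets meeting $A$ in an even number of points) has automorphism group exactly $(\Sym(A)\times\Sym(B))\rtimes C_2$: any automorphism must either preserve or swap $A$ and $B$, since the indicator function of $A$ is determined modulo a global constant by the edge relation. This group preserves the partition $\{A,B\}$, so it is imprimitive and in particular \emph{not} $2$-transitive, contrary to your claim; and the hypergraph is not even $2$-set-homogeneous, since a pair inside $A$ and a pair meeting both $A$ and $B$ carry isomorphic (empty) induced substructures but lie in different orbits. (Any genuinely set-homogeneous example with non-$2$-transitive group would in any case contradict your own part (i).) The paper's $M_4$ is entirely different: it lives on the countable dense proper $2$-regular $C$-set $(M,C)$, with edges the $4$-sets $\{x_1,x_2,y_1,y_2\}$ satisfying $C(x_i;y_1,y_2)$ and $C(y_j;x_1,x_2)$; then $\Aut(M_4)=\Aut(M,C)$ is $2$-transitive but the stabiliser of $a$ preserves the cone relation $x\sim y\Leftrightarrow C(a;x,y)$, and set-homogeneity is proved by an induction showing that hypergraph-isomorphic finite subsets carry isomorphic $C$-structures.

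For part (i), your reduction to an invariant linear order matches the paper (via MacDermott's theorem), but the ensuing step fails: adjoining a fifth point to an edge $\{a<b<c<d\}$ in the five possible order-positions does \emph{not} produce five isomorphic $5$-sets, because the new point creates four further $4$-subsets whose edge/non-edge status is uncontrolled, so no contradiction is extracted and the conclusion ``empty or complete'' does not follow. The paper's actual argument is a substantial combinatorial analysis: it shows (Lemma~\ref{5verts}) that every pattern $S_{ij}$ with $|i-j|\ge 2$ and every $T_i$ must be realised, observes that for each edge-count $i$ the common intersection of the edges of a $5$-set sits in a fixed position set $J_i$ relative to the order (by ${\leq}5$-set-homogeneity), and then verifies via a table that no choice of the $J_i$ realises all required patterns. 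For part (iii) the paper does not analyse the relations $\sim_x$ by hand; it invokes Cameron's theorem (Theorem~\ref{4hom}(ii)) that a $3$-homogeneous, $2$-transitive, not $2$-primitive group preserves a linear betweenness relation or a $2$-regular $C$-relation, and in the latter case the two orbits on $4$-sets yield $M_4$ or its complement. Your proposed casework would amount to reproving Cameron's result, and your case (A) (a single global bipartition) does not describe $M_4$, whose invariant equivalence relations $\sim_x$ genuinely vary with the base point.
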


\begin{thrm}\label{6hyper}
\begin{enumerate}
\item[(i)] There is an infinite set-homogeneous but not homogeneous 6-hypergraph $M_6$ whose automorphism group is 3-transitive but not 3-primitive.
\item[(ii)] If $M$ is an infinite ${\leq}5$-set-homogeneous 6-hypergraph whose automorphism group is not 3-primitive, then $M$  is isomorphic to $M_6$ or its complement, or $\Aut(M)$ preserves a separation relation on $M$.
\end{enumerate}
\end{thrm}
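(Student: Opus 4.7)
My plan for part (i) is to construct $M_6$ explicitly from a countable dense cyclic order $(\Omega, K)$: take the underlying set to be $\Omega$ and declare the 6-hyperedges to form one of the two orbits of a suitable proper subgroup of $\Aut(\Omega, K)$ on the 6-subsets of $\Omega$. Set-homogeneity is then verified by a Fra\"iss\'e-style back-and-forth on the finite induced subhypergraphs, while failure of homogeneity is witnessed by exhibiting a partial isomorphism between finite subhypergraphs of $M_6$ that does not extend to a global automorphism. Three-transitivity of $\Aut(M_6)$ is inherited from the 3-transitivity of the full separation-preserving group on $\Omega$, and failure of 3-primitivity is witnessed by the partition of $M_6\setminus\{a,b\}$ into the two arcs determined by a fixed pair $\{a,b\}$.

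For part (ii), let $M$ be ${\leq}5$-set-homogeneous with $G = \Aut(M)$ not 3-primitive. Since induced substructures of size at most five carry no relations, the hypothesis is equivalent to $G$ being $k$-homogeneous on $M$ for every $1 \le k \le 5$. I would appeal to Cameron's classification of infinite highly homogeneous permutation groups, adapted (as in the related classifications cited in the introduction) to the weaker hypothesis of $k$-homogeneity only up to $k = 5$. The highly transitive case is excluded by the non-3-primitivity hypothesis. In the remaining cases, $G$ preserves one of a linear order, a betweenness relation, a circular order, or a separation relation; since the first three each refine a separation relation, $G$ preserves a separation relation in every case, yielding the second alternative of the conclusion.

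The remaining subtlety is when $G$ preserves a separation relation on $M$ but the 6-hyperedge relation is a proper expansion, so $G$ is strictly smaller than the full separation-preserving group yet still 5-homogeneous and 3-transitive. Here I would examine the induced 4-hypergraph on $M\setminus\{a,b\}$, whose 4-hyperedges are precisely the 6-hyperedges of $M$ passing through $\{a,b\}$; under the stabilizer $G_{\{a,b\}}$, this is itself a ${\leq}3$-set-homogeneous 4-hypergraph. Applying Theorem~\ref{4hyper} to it severely restricts the possibilities, and a combinatorial bookkeeping then forces $M$, up to complementation, to be the specific hypergraph $M_6$ constructed in part~(i). The main obstacle will be this final uniqueness step: ruling out every exotic expansion of the separation-preserving structure that is consistent with ${\leq}5$-set-homogeneity, leaving only $M_6$ and its complement.
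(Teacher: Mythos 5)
There is a genuine gap, and it is the same omission in both parts: your proposal never mentions the structure from which $M_6$ is actually built, namely the countable universal homogeneous $3$-branching $D$-set $(M,D)$. In part (i), a countable dense circular order cannot serve as the base: the separation-preserving group on such a set is $k$-homogeneous for \emph{every} $k$ (it is one of Cameron's highly homogeneous types), and its index-2 subgroup preserving the circular orientation is likewise transitive on unordered $k$-subsets for all $k$. Hence neither group has two orbits on $6$-subsets, and there is no nontrivial orbit of $6$-sets to declare as the edge set; the paper even remarks in Section 5.2 that circular-order-compatible variants yield no examples. What does work is the $3$-branching $D$-set: its automorphism group $J$ is $5$-homogeneous with exactly two orbits on $6$-subsets, one orbit is taken as $E$, the equality $\Aut(M_6)=J$ follows from the Bodirsky--Jonsson--Pham reduct classification, and set-homogeneity is proved by showing that hypergraph-isomorphic finite sets carry isomorphic $D$-structures (via maximal complete subhypergraphs and a reduction to the $4$-hypergraph $M_4$ of Proposition~\ref{ex4hyp}). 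Your witness for non-$3$-primitivity (the two arcs determined by a pair) is the witness for the separation relation, not for $M_6$; for $M_6$ the invariant equivalence relation on $M\setminus\{a,b\}$ comes from the $D$-relation.

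The same omission undermines part (ii). The relevant refinement of Cameron's theorem (Theorem~\ref{4hom}(iv)) says that a $5$-homogeneous, $3$-transitive but not $3$-primitive group preserves \emph{either} a separation relation \emph{or} a structure elementarily equivalent to the universal homogeneous $3$-branching $D$-set; your case list stops at linear/betweenness/circular/separation and drops the $D$-set alternative, which is precisely the case that produces $M_6$ and its complement (since $\Aut(M,D)$ has two orbits on $6$-sets, $\leq 5$-set-homogeneity forces $E$ to be one of them). Consequently your final step --- recovering $M_6$ as an ``exotic expansion'' inside the separation case --- cannot succeed, because $\Aut(M_6)$ does not preserve a separation relation at all. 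A secondary problem there: the induced $4$-hypergraph on $M\setminus\{a,b\}$ is only ${\leq}3$-set-homogeneous under $G_{\{a,b\}}$, so Theorem~\ref{4hyper}, which needs ${\leq}5$-set-homogeneity, does not apply to it. The non-$3$-transitive case of part (ii) is handled as you suggest, via Theorem~\ref{4hom}(iii) and the fact that circular orders and linear betweenness relations define separation relations; that portion of your argument matches the paper.
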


As noted in Section 5.2, the hypergraphs $M_3, M_4$ and $M_6$ belong in a family -- they live on the same vertex set with  $\Aut(M_3)<\Aut(M_4)<\Aut(M_6)$. We also note in Remark~\ref{two-graph} that $N_3$ is a {\em two-graph} -- a 3-hypergraph such that any 4 vertices carry an even number of edges. The hypergraph $N_3$ is closely related to the set-homogeneous graph $R(3)$ from \cite{droste+} mentioned above. 

The strategy for the construction of $M_3, N_3, M_4, M_6$ is the same in each case, and analogous to the corresponding constructions in \cite{droste+} (for graphs) and \cite{gray+} (for digraphs). Let $M$ be one of these $k$-hypergraphs, viewed as a structure in a language $L$ with a single $k$-ary relation symbol. We first consider a suitable (in each case already known) homogeneous structure $N$ with the same domain but different language $L'$, define $M$ from $N$, and prove that $\Aut(M)=\Aut(N)$ by showing they have the same $\emptyset$-definable relations (in the cases considered, the fact that $\Aut(N)\leq \Aut(M)$ ensures that $M$ is not homogeneous.) We then consider an isomorphism $\sigma:U \to V$ between finite substructures of $M$, and show that the $L'$-structures induced on $U$ and $V$ from $N$ are also isomorphic (though not necessarily via $\sigma$). It follows by homogeneity of $N$ that there is $g\in \Aut(N)$ with $U^g=V$, and since $\Aut(M)=\Aut(N)$ we find $g\in \Aut(M)$, as required. The structures $N$ which yield hypergraphs in this way are chosen from a small family of constructions already known to have interesting properties (see Section 2.1). It appears that other similar constructions do not yield set-homogeneous hypergraphs.  

The results characterising these examples mostly use known results characterising permutation groups with a higher degree of homogeneity than transitivity -- these are discussed in Section 2.2. The characterisation of $M_3$ uses a more direct bare-hands argument.

Lachlan and Tripp \cite{lachlan} classified {\em finite} homogeneous 3-hypergraphs, using the observation  that their automorphism groups are 2-transitive, together with the classification of finite 2-transitive groups (so resting on the classification of finite simple groups). We also briefly consider finite set-homogeneous hypergraphs. We do not carry out a classification (though this looks fully feasible), but obtain the following result. Note that part (ii) shows the Enomoto argument mentioned above does not work for 3-hypergraphs. 

\begin{thrm}
\begin{enumerate}
\item[(i)] For $k\geq 3$, every finite set-homogeneous $k$-hypergraph has 2-transitive automorphism group.
\item[(ii)] There is a  set-homogeneous 3-hypergraph on 7 vertices which is not homogeneous.
\end{enumerate}
\end{thrm}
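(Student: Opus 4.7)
The plan is as follows. Set $G := \Aut(M)$. Since $k \geq 3$, every pair of distinct vertices of $M$ induces the empty substructure, so all unordered $2$-subsets are isomorphic and set-homogeneity yields that $G$ is transitive on unordered $2$-subsets (i.e., $G$ is $2$-homogeneous as a permutation group). To upgrade this to $2$-transitivity, I suppose for contradiction that $G$ is $2$-homogeneous but not $2$-transitive. Then $G$ has exactly two orbits on ordered pairs of distinct vertices, interchanged by reversal, and these determine a $G$-invariant tournament $T$ on $M$. Note also that $|G|$ must be odd: any involution of $G$ contains some transposition $(a\ b)$ and so swaps the pair $\{a,b\}$ setwise, and via $2$-homogeneity this would supply a swap for every unordered pair and force $2$-transitivity.

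The case $k \geq 4$ is quick. If $|M| \leq k$ then $M$ has at most one edge and $\Aut(M) = S_{|M|}$ is already $2$-transitive, so assume $|M| \geq k+1 \geq 5$. Then all unordered $3$-subsets of $M$ are empty and hence mutually isomorphic, so set-homogeneity forces $G$ to be transitive on unordered $3$-subsets. But $T$ is not the transitive tournament (else $\Aut(T)$ is trivial, contradicting transitivity of $G \leq \Aut(T)$ on $M$), so $T$ contains a $3$-cycle; and every $4$-vertex sub-tournament of $T$ contains at least one transitive triple, so $T$ also contains a transitive triple. These two sub-tournament types are non-isomorphic, so $G$ has at least two orbits on $3$-subsets, a contradiction.

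The case $k = 3$ is the main obstacle, because the $3$-subsets of $M$ already split into two isomorphism classes (edges and non-edges). The plan here is to invoke Kantor's classification of finite $2$-homogeneous but not $2$-transitive groups: any such $G$ has degree $q$ a prime power with $q \equiv 3 \pmod 4$ and embeds in $A\Gamma L(1,q)$ with image lying in the index-$2$ subgroup $\mathbb{F}_q \rtimes \mathrm{QR}$ of $AGL(1,q)$, where $\mathrm{QR}$ is the group of non-zero quadratic residues in $\mathbb{F}_q$. An explicit orbit calculation then shows that this subgroup has (at least) two orbits on the $3$-cycles of the associated Paley tournament---the two cyclic-orientation classes, which are fused only by multiplication by a non-residue---and hence at least three orbits overall on the unordered $3$-subsets of $M$. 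Set-homogeneity permits only two such orbits (one per isomorphism class), the desired contradiction.

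For (ii), I construct the \emph{Paley $3$-hypergraph} on $\mathbb{F}_7$: the edges are the $3$-cycles of the Paley tournament on $\mathbb{F}_7$ (with arc $a \to b$ iff $b - a$ is a non-zero quadratic residue). The affine group $AGL(1,7) = \{x \mapsto \alpha x + \beta : \alpha \in \mathbb{F}_7^*,\ \beta \in \mathbb{F}_7\}$ of order $42$ preserves $M$, since multiplication by a non-residue reverses the Paley tournament but fixes the set of unoriented $3$-cycles; and because $AGL(1,7)$ is maximal in $S_7$ while $M$ has a proper non-empty edge set, $\Aut(M) = AGL(1,7)$, which is $2$-transitive. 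Set-homogeneity will be verified by computing the $AGL(1,7)$-orbits on $t$-subsets for $t \in \{3,4,5\}$ (the cases $t \in \{1,2,6,7\}$ follow by $2$-transitivity and complementation) and checking that these orbits coincide with the isomorphism classes; the key combinatorial fact is that every $4$-subset of $M$ contains either one or two edges, giving two isomorphism types which form $AGL(1,7)$-orbits of sizes $14$ and $21$ respectively. Non-homogeneity is immediate: with $14$ edges and $|\Aut(M)| = 42$, the setwise stabilizer of any edge has order $3$ and induces only $A_3$ on that edge, so swapping two vertices of an edge gives an isomorphism of the induced substructure that does not extend to an automorphism of $M$.
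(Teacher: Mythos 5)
Your reduction for $k\geq 4$ is correct and pleasantly self-contained: the paper instead quotes MacDermott's theorem (Theorem~\ref{4hom}(i)) that a $3$-homogeneous, non-$2$-transitive group preserves a linear order and then uses rigidity of finite orders, whereas you get the contradiction directly by exhibiting both a $3$-cycle and a transitive triple in the invariant tournament. Your example for (ii) is the same hypergraph as the paper's (your edge set, the $3$-cycles of the Paley tournament on $\mathbb{F}_7$, is exactly the orbit $\Omega_1$ of Example~\ref{finiteex}), and your non-homogeneity argument via the order-$3$ edge stabiliser inducing only $A_3$ is a legitimate alternative to the paper's appeal to the Lachlan--Tripp classification, granted the (true but nontrivial) maximality of $\mathrm{AGL}(1,7)$ in $S_7$.

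The genuine gap is in the $k=3$ case of (i). The assertion that $\mathbb{F}_q\rtimes\mathrm{QR}$ always has at least two orbits on the unoriented $3$-cycles of the Paley tournament is false. Take $q=11$: the $3$-cycles are precisely the $55$ three-term arithmetic progressions, $\mathrm{AGL}(1,11)$ is transitive on them, and the stabiliser of $\{0,1,2\}$ has order $2$, generated by the reflection $x\mapsto 2-x$, which is multiplication by the non-residue $-1$ and hence does \emph{not} lie in the index-$2$ subgroup; so the orbit does not split, and $\mathbb{F}_{11}\rtimes\mathrm{QR}$ (order $55$) is sharply transitive on all $55$ three-cycles. (Contrast $q=7$, where the stabiliser of a $3$-cycle is a rotation of order $3$ lying in the residue part, so the orbit of size $14$ does split as $7+7$ --- your picture is correct there but does not generalise.) The conclusion you need, namely at least three orbits on $3$-subsets, is still true, but to get it you must also count transitive triples: there are $q(q-1)(q-3)/8$ of them, which exceeds the order $q(q-1)d/2$ of the odd-order overgroup in $A\Gamma L(1,q)$ whenever $q>4d+3$ (so for all admissible $q>7$), forcing intransitivity there, while for $q=7$ one uses the $3$-cycles since $14\nmid 21$. (A second, minor, slip: for $q$ a proper prime power the group may involve field automorphisms, so it need not lie in $\mathrm{AGL}(1,q)$; this is harmless since field automorphisms also preserve the Paley tournament.) Alternatively, you could avoid Kantor altogether as the paper does, via the bare-hands tournament argument of Lemmas~\ref{countedges}, \ref{7vertices} and \ref{linear}, which shows the automorphism group would have to preserve a linear order --- impossible for a transitive group on a finite set of size greater than one.
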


We also show that aspects of Lachlan's theory of finite homogeneous structures over an {\em arbitrary} finite relational language hold  when homogeneity is weakened to set-homogeneity; see Theorem~\ref{lachlanfinite}.

The paper is organised as follows. In the remainder of this introduction we briefly discuss the model theory of set-homogeneity. 
In Section 2 we give some preliminaries on certain specific treelike homogeneous structures and on some results on permutation groups, mainly of Cameron, that we use. We prove Theorem A in Section 3, and Theorems B and  C in Section 4. We discuss finite set-homogeneous structures (in particular Theorem D) in Section 5.1, and consider some further directions and open questions in Section 5.2.

\subsection{Model theory of set-homogeneity}
We briefly  discuss model-theoretic properties of  set-homogeneity. Most background can be found in \cite{cameron2}, and \cite{tent} can also be taken as a model-theoretic source. 

A countably infinite first-order structure $M$ is {\em $\omega$-categorical} if it is determined up to isomorphism among countable structures by its first-order theory. By the Ryll-Nardzewski Theorem, a countably infinite structure $M$ is $\omega$-categorical if and only if its automorphism group is {\em oligomorphic}, that is, has finitely many orbits on $M^n$ for all $n$. It is immediate that any countably infinite set-homogeneous structure over a finite relational language has oligomorphic automorphism group, so is $\omega$-categorical. 

Recall that the {\em age} {\rm Age}$(M)$ of a countably infinite relational structure $M$ over a finite relational language $L$ is the collection of finite structures which embed in $M$. Fra\"iss\'e's amalgamation theorem states that any two infinite homogeneous structures with the same age are isomorphic, and that a collection of finite $L$-strucures is the age of a homogeneous structure if and only if it is closed under isomorphism, substructure, joint embedding property, and has the `amalgamation property'. It was noted in \cite[Theorem 1.2]{droste+} that we may in Fra\"iss\'e's Theorem  replace `homogeneous' by `set-homogeneous' and `amalgamation property' by `twisted amalgamation property' (TAP), where a class $\mathcal{C}$ of finite structures has (TAP) if and only if for any $A,B_1,B_2\in \mathcal{C}$ and $f_i:A\to B_i$ (for $i=1,2$) there are $D\in \mathcal{C}$, embeddings $g_1:B_1 \to D$, $g_2:B_2\to D$ and $h \in \Aut(A)$ such that $g_1\circ f_1=g_2 \circ f_2\circ h$.

It is well-known that an $\omega$-categorical structure over a finite relational language is homogeneous if and only if its theory has quantifier-elimination. A natural weakening of quantifier-elimination is model-completeness: a theory $T$ is {\em model-complete} if every formula is equivalent modulo $T$ to an existential formula. It is shown in \cite[Chapter 11, Section 8]{fraisse} with an attribution to Pouzet (see also \cite[pp. 90-91]{droste+})  that any infinite set-homogeneous structure $M$ over a finite relational language is {\em uniformly prehomogeneous}; that is, for any finite $A\leq M$ there is finite $B$ with $A\leq B\leq M$ and with $|B|$ bounded as a function of $|A|$, such that for  any partial isomorphism $f$ on $M$ with domain $A$, if $f$ extends to $B$ then $f$ extends to an automorphism of $M$. (Formally, the setting in \cite{fraisse} is for languages with a single relation symbol, but the extension to a finite relational language is routine). It is easily checked that any uniformly prehomogeneous structure is model-complete, and hence that any set-homogeneous $L$-structure is model-complete.

\subsection{Notation} 
If $G$ is a permutation group on a set $X$ (sometimes written as $(G,X)$), we write $x^g$ for the image of $x\in X$ under $g\in G$. For $U\subset X$ and $g\in G$, let $U^g=\{u^g:u\in U\}$. For arbitrary functions we write the function to the left of the argument. 

We view a $k$-hypergraph as a first order structure $(M,E)$, where $E$ is a $k$-ary relation which is assumed only to hold if all arguments are distinct, and to be invariant under permutations of the arguments (i.e. to be {\em irreflexive} and {\em symmetric}).
If $(M,E)$ is a $k$-hypergraph, we shall write $x_1\ldots x_k$ rather than $\{x_1,\ldots,x_k\}$ for an edge. A {\em complete} $k$-hypergraph is a $k$-hypergraph  all of whose $k$-subsets are edges. The {\em complement} of the $k$-hypergraph $(M,E)$, denoted $(M,E)^c$ (or just $M^c$), has the same vertex set $M$, but a $k$-subset of $M$ is an edge of $M^c$ if and only if it is a non-edge of $M$. When we use the word `subhypergraph' we always mean {\em induced subhypergraph}, i.e. the model-theoretic notion of substructure. 

 If $<$ is a total order on $X$, and $A,B\subset X$, we write $A<B$ to denote that
$\forall a\in A\forall b \in B(a<b)$. 

If $(X,\to)$ is a tournament (that is, a digraph such that for any distinct $x,y\in X$ exactly one of $x\to y$ or $y \to x$ holds) and $x\in X$, then $x^+:=\{y\in X: x \to y\}$ and $x^-:=\{y\in X: y \to x\}$. 
We denote by $C_3$ the tournament on $\{a,b,c\}$ such that $a \to b\to c\to a$.

\section{Preliminaries}
We review here some constructions of homogeneous structures and results on permutation groups which we use heavily. 

\subsection{Some homogeneous structures}
As indicated in the Introduction, our strategy for finding set-homogeneous hypergraphs $(M,E)$ is to find certain other very specific homogeneous structures on the same domain and with the same automorphism group. We here give a brief review of the structures used.

First, we recall the linear betweenness relation, circular order, and separation relation which are derivable from a linear order. If $(X,\leq)$ is a linear order, then a {\em linear betweenness relation} $B(x;y,z)$ can be defined on $X$, putting 
$$B(x;y,z) \Leftrightarrow \big((y\leq x\leq z) \vee (z\leq x\leq y)\big).$$
A {\em circular ordering} $K(x,y,z)$ is definable on $X$ with the rule
$$K(x,y,z)\Leftrightarrow \big( (x\leq y\leq z)\vee (y\leq z\leq x)\vee (z\leq x \leq y)\big).$$ Given a circular order $K$ on $X$, a {\em separation relation} on $X$ is defined by 
$$S(x;y;z,w)\Leftrightarrow \big[ \big(K(x,y,z)\wedge K(x,w,y)\big) \vee \big(K(x,z,y)\wedge K(x,y,w)\big)\big].$$
Here, if $x,y,z,w$ are distinct, then $S(x,y;z,w)$ says that $z,w$ lie in distinct segments with respect to $x,y$ of the circular order (and vice versa). Axioms for these can be  found in \cite[Part I]{AN1}, and it can be shown that any structure satisfying these axioms arises from a linear order in this way.

Next, we introduce $C$-relations. Following Section 10 of \cite{AN1}, a   {\em $C$-relation} is a ternary relation $C$ on a set $M$ satisfying (C1)-(C4) of the following axioms (with free variables all universally quantified -- we omit these quantifiers); it is {\em proper} if it also satisfies (C5) and (C6).
\begin{enumerate}
\item[(C1)] $C(x;y,z)\to C(x;z,y)$;
\item[(C2)] $C(x;y,z) \to \neg C(y;x,z)$;
\item[(C3)] $C(x;y,z) \to\big(C(x;w,z)\vee C(w;y,z)\big)$;
\item[(C4)] $x \neq y \to C(x;y,y)$;
\item[(C5)] $\exists x C(x;y,z)$;
\item[(C6)] $x\neq y \to \exists z\big(y \neq z \wedge C(x;y,z)\big)$;
%\item[(C7)] $C(x;y,z)\to \exists w[C(w;y,z) \wedge C(x,y,w)]$.
\end{enumerate}
The relation $C$ on $M$ is {\em dense} if also

\noindent
(C7) $C(x;y,z) \to \exists w(C(w;y,z)\wedge C(x;y,w))$. \\
The structure $(M,C)$, where $C$ is a $C$-relation on $M$, is called a {\em $C$-set.}

It is shown in \cite[Theorem 11.2]{AN1} that if $(X,\preceq)$ is a lower semilinearly ordered set (a partial order such that for each $a$ the set $\{x:x\preceq a\}$ is totally ordered, and such that any two elements have a common lower bound), then there is a natural  $C$-relation on the set $S$ of maximal chains (totally ordered subsets) of $X$; here $C(x;y,z)$ holds if and only if $x\cap y \subseteq y \cap z$ (where the chains $x,y,z$ are viewed as subsets of $X$). Furthermore \cite[Theorem 12.4]{AN1} any $C$-set $(M,C)$ arises in this way, with $M$ a `dense' set of maximal chains of some lower semilinear order $(X,\preceq)$ -- the density here means that any node $a\in X$ will lie in some chain  of $M$. The semilinear order $(X,\preceq)$ is canonically constructed from $(M,C)$ -- indeed it is first-order interpretable without parameters in $(M,C)$, as a quotient of an equivalence relation on $M^2$; we shall view $(M,C)$ as coming from such $(X,\preceq)$. Configurations for a $C$-relation (and $D$-relation below) are shown in Figure 1.

\captionsetup[subfigure]{labelformat=empty}
\begin{figure}[H]
	\centering
	\begin{subfigure}[b]{0.25\textwidth}
		\centering

		\begin{tikzpicture}

		\draw (-2,0)--(-3,2)node[above]{$x$};
		\draw (-2,0)--(-1,2)node[above]{$z$};
		\draw (-1.5,1)--(-2,2)node[above]{$y$};
		\end{tikzpicture}

		\captionof{figure}{$C(x;y,z)$}

	\end{subfigure}
	\begin{subfigure}[b]{0.25\textwidth}
		\centering  
		\begin{tikzpicture}

		\draw (4,0)--(6,0);
		\draw (4,0)--(3,1)node[above]{$x$};
		\draw (4,0)--(3,-1)node[below]{$y$};
		\draw (6,0)--(7,1)node[above]{$z$};
		\draw (6,0)--(7,-1)node[below]{$w$};
		\end{tikzpicture}

		\captionof{figure}{$D(x,y;z,w)$}

	\end{subfigure}
	\caption{}
	
\end{figure}

With $(X,\preceq)$ and $(M,C)$ as above, for $a\in X$, let $S_a$ be the set of chains in $M$ which contain $a$. There is a natural equivalence relation $E_a$ on the set $S_a$: we put $E_a xy$ if and only if there is $b\in X$ with $a\prec b$ such that $x,y$ both contain  $b$. We call the $E_a$-classes {\em cones} of $(M,C)$, and say that $(M,C)$ is {\em regular}  if the number of cones at $a$   does not depend on $a$, and is {\em $k$-regular} if this  number is $k$; in this case, following  \cite[Section 10]{AN1}, the {\em branching number} of $(M,C)$ is $k+1$. It is well-known (see for example \cite[pp. 159, 161]{cameron-tree}, or \cite[Theorem 12.6]{AN1})  that for each $k\in \mathbb{N}^{\geq 2} \cup \{\infty\}$ there is up to isomorphism a unique countably infinite dense $k$-regular proper $C$-set, and this structure is homogeneous. 

Next, we briefly introduce $D$-relations, as axiomatised in Part V of \cite{AN1}. An arity 4 relation $D$ on a set $M$ is a proper dense $D$-relation (and $(M,D)$ is a {\em $D$-set}) if axioms (D1)-(D4) below hold, again with universal quantifiers omitted; it is {\em proper} if also (D5) holds, and {\em dense} if (D6) holds.
\begin{enumerate}
\item[(D1)] $D(x,y;z,w) \to \big(D(y,x;z,w)\wedge D(x,y;w,z) \wedge D(z,w;x,y)\big)$;
\item[(D2)] $D(x,y;z,w) \to \neg D(x,z;y,w)$;
\item[(D3)] $D(x,y;z,w) \to \big(D(u,y;z,w)\vee D(x,y;z,u)\big)$;
\item[(D4)] $(x\neq  z \wedge y\neq z) \to D(x,y;z,z)$;
\item[(D5)] (properness) $\big(x,y,z \mbox{~distinct~} \to \exists w(w \neq z \wedge D(x,y;z,w))\big)$; 
\item[(D6)] (density) $D(x,y;z,w) \to \exists u\big(D(u,y;z,w) \wedge D(x,u;z,w) \wedge D(x,y;u,w) \wedge D(x,y;z,u)\big)$.
\end{enumerate}

If $(M,C)$ is a $C$-set, we may define a $D$-relation on $M$, where, for $x,y,z,w$ distinct we put $D(x,y;z,w)$ if
$$\big(C(x;z,w)\wedge C(y;z,w) \big)\vee \big(C(z;x,y)\wedge C(w;x,y)\big).$$
We tend to think of a $D$-relation as holding on the set of `directions' of a general betweenness relation (as defined in Part V of \cite{AN1}); in fact, by \cite[Theorem 26.4]{AN1} any $D$-relation arises in essentially this way. If $(M,E)$ is a graph-theoretic unrooted tree whose vertices have degree at least three, then there is a $D$-relation (not satisfying (D6)) on the set of {\em ends}; for distinct ends $\hat{x},\hat{y},\hat{z},\hat{w}$ we put $D(\hat{x},\hat{y};\hat{z},\hat{w})$ if and only if there are $x\in \hat{x},y\in \hat{y}, z \in \hat{z}$ and $w\in \hat{w}$ such that $x\cup y$ and $z\cup w$ are vertex-disjoint two-way infinite paths. Observe that if $(M,D)$ is a $D$-set  and $a\in M$, then there is an induced $C$-relation $C_a$ on $M\setminus \{a\}$ with `downwards direction $a$' -- define $C_a(x;y,z)$ to hold if and only if $D(a,x;y,z)$ holds. We shall say that the $D$-set $(M,D)$ is {\em $k$-branching} if the corresponding $C$-set $(M\setminus\{a\}, C_a)$ is $k$-branching. Again, for each $k\in \mathbb{N}^{\geq 3} \cup \{\infty\}$ there is a unique countably infinite dense proper $k$-branching $D$-set, and this structure is homogeneous (existence and uniqueness follow from \cite[Theorems 12.6 and 22.1]{AN1}, and homogeneity from the `First Variation' in \cite[p.159]{cameron-tree}). 

There is a natural notion of a $C$-relation $C$ on $M$ being {\em compatible} with a total order on $M$ -- this is introduced and exploited in Section 2.1. There is likewise a notion of a $D$-relation being compatible with a circular order, but this appears not to lead to further set-homogeneous hypergraphs.

We mention one further construction -- the {\em dense local order} of \cite{cameronII}, which is one of the three countably infinite homogeneous tournaments classified by Lachlan in \cite{lachlantourn}. Following \cite{cameronII}, a {\em local order} is a tournament such that all out-neighbourhoods $x^+$ and in-neighbourhoods $x^-$ are totally ordered by $\to$. The dense local order is easiest to describe as the unique countably infinite tournament $T=(Z, \to)$ obtained by distributing a countably infinite set $Z$ of  points densely on the unit circle, no two antipodal, and putting $x \to y$  if the clockwise distance from $x$ to $y$ on the circle is less than the anticlockwise distance. It is the countable homogeneous tournament obtained as the Fra\"iss\'e limit of the collection of all finite local orders; by \cite[Theorem 6.2]{cameronII} it is the unique (up to isomorphism) countable tournament on at least 4 vertices containing $C_3$ and such that each  set $(x^+,\to)$ and $(x^-,\to)$ is a dense linear order without endpoints. 
As noted by Cameron in \cite[pp. 57--58]{cameronII} this tournament is isomorphic to its complement, and if $\gamma$ is such an isomorphism, then the group $H:=\langle \Aut(T), \gamma\rangle$ has $\Aut(T)$ as a subgroup of index 2. In particular, $H$ is 2-transitive, not 2-primitive, and preserves a separation relation on $Z$. 

These examples have been heavily explored in the literature, from various points of view. For example, they
are among the few known examples of countably infinite structures $M$ whose automorphism groups $\Aut(M)$ are oligomorphic,  primitive, and have the property that if $f(k)$ denotes the number of orbits of $\Aut(M)$ on $k$-element subsets of $M$, then $f(k)$ is bounded above exponentially. This viewpoint is developed in \cite{cameron-tree}.  Among homogeneous structures, these structures each have the rare property that their age is well-ordered under embeddability. For recent work in this direction see for example Conjecture 1 of \cite{braunfeld2} that for a homogeneous $\omega$-categorical relational structure these conditions are co-incident, and coincide also with `monadic NIP'. 
The automorphism groups of homogeneous $C$ and $D$-sets are {\em Jordan} groups -- this was a motivating theme of \cite{AN1}.

Many of our arguments with $C$ and $D$-relations and local orders are very pictorial. We include some diagrams to indicate the intended configurations, but encourage the reader to draw others.

\subsection{Homogeneity in permutation groups}
Our results on set-homogeneity use and mimic several earlier results on homogeneity of permutation groups. First, recall the following result of Cameron which lies in the background. 
\begin{theorem} \cite{cameron76}
Let $(G,X)$ be an infinite  permutation group which is $k$-homogeneous for all $k$ but not $k$-transitive for some $k$. Then $G$ preserves on $X$ a linear order, circular order, linear betweenness relation, or separation relation.
\end{theorem}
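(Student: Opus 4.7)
The plan is to let $k_0 \geq 2$ be the smallest $k$ for which $G$ is not $k$-transitive, and to extract from the failure of $k_0$-transitivity a canonical $G$-invariant relation of arity $k_0$. For each $k$-subset $A \subseteq X$ write $H_A \leq \Sym(A)$ for the image of the setwise stabiliser $G_{\{A\}}$ in its action on $A$; by $k$-homogeneity all such $H_A$ (as $A$ varies over $k$-subsets) are conjugate in $\Sym(A)$, and the standard orbit-counting identity gives the number of $G$-orbits on ordered $k$-tuples of distinct elements as $k!/|H_A|$. The minimality of $k_0$ forces $H_A = \Sym(A)$ for all $A$ of size less than $k_0$, while $H_A$ is a proper subgroup of $\Sym(A)$ when $|A| = k_0$. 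Choosing any $H_A$-orbit on the orderings of a $k_0$-subset and translating by $G$ yields a canonical $G$-invariant relation $R$ of arity $k_0$ on $X$, and the task is to show $R$ (or a small-arity derivative of it) is of one of the four listed types.

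I would then run a case analysis on $k_0$. For $k_0 = 2$, $H_A$ is trivial and $R$ is a $G$-invariant tournament $(X,\to)$; 3-homogeneity forces every induced 3-sub-tournament to be isomorphic, so either every triangle is transitive or every triangle is a 3-cycle, and a short degree-count inside any 4-subset (in the all-3-cycle case each vertex would need in-degree $\tfrac32$ in the enclosing 4-tournament) combined with 4-homogeneity rules out the 3-cycle alternative, so $\to$ is a linear order. For $k_0 = 3$, $G$ is 2-transitive, and after using 4-homogeneity to rule out $H_A = 1$, the admissible proper subgroups of $\Sym_3$ are $A_3$ (yielding a cyclic ternary relation which, via 4-homogeneity, is verified to be a circular order) or a transposition stabiliser $C_2$ (yielding a ``distinguished middle'' ternary relation, verified to be a linear betweenness). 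For $k_0 = 4$, $G$ is 3-transitive, and a case check on proper subgroups of $\Sym_4$ together with 5-homogeneity forces $H_A \cong D_8$, the stabiliser of a pairing of the 4-set; this gives a $G$-invariant 4-ary relation which one checks satisfies the separation axioms of \cite[Part I]{AN1}. Finally I would rule out $k_0 \geq 5$: any proper subgroup of $\Sym_{k_0}$ compatible with the $(k_0{-}1)$-transitivity of $G$ must be $A_{k_0}$, whose alternating-sign invariant on $k_0$-subsets cannot be realised consistently on an infinite set under $(k_0{+}1)$-homogeneity.

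The main obstacle is the axiom verification in cases $k_0 = 3$ and $k_0 = 4$: from the datum $H_A$ one only reads off a local orientation or labelling of each $k_0$-subset, whereas the characteristic axioms of a circular order, linear betweenness, or separation relation (for instance the transitivity axiom $B(x;y,z) \wedge B(x;z,w) \to B(x;y,w)$ of betweenness, or the exclusion axioms of separation) concern $(k_0{+}1)$- or $(k_0{+}2)$-point configurations. Propagating the local datum across overlapping $k_0$-subsets to establish these global axioms is where the hypothesis that $G$ is $k$-homogeneous for \emph{every} $k$, not merely up to $k_0 + 1$, is used in full. This inductive propagation, together with the parity argument needed to eliminate $k_0 \geq 5$, is the technical heart of the argument.
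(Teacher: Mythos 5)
This theorem is not proved in the paper --- it is quoted verbatim from \cite{cameron76} --- so your proposal can only be measured against the argument in the literature. Your overall frame is the right one and matches how the result is actually approached: stratify by the least $k_0$ for which $G$ fails to be $k_0$-transitive, study the group $H_A$ induced on a $k_0$-set $A$ by its setwise stabiliser (noting $H_A=\Sym(A)$ below $k_0$ by minimality, and that the orbit count on ordered $k_0$-tuples is $k_0!/|H_A|$), and match the possibilities for $H_A$ to the four relations. Your treatment of $k_0=2$ is complete: the invariant tournament has all triples isomorphic by $3$-homogeneity, and the degree count $d_v(3-d_v)=3$ having no integer solution kills the all-$3$-cycle alternative inside any $4$-set, so the tournament is transitive.

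Beyond that, however, this is a plan rather than a proof, and the steps you defer are precisely the ones carrying the mathematical content. Three concrete gaps. First, the narrowing of $H_A$: for $k_0=3$ you must exclude $H_A=1$, and for $k_0=4$ you must exclude every proper subgroup of $\Sym_4$ other than $D_8$ --- in particular $A_4$ (which would give an ``orientation of $4$-sets'') and $\Sym_3$ (a ``distinguished point of each $4$-set''); you name no mechanism for these exclusions and they are not routine. Second, the conversion of an invariant orbit of orderings into a relation genuinely satisfying the circular-order, betweenness or separation axioms of \cite[Part I]{AN1} is exactly the substance of Cameron's Theorems 5.1 and their sequels; you correctly flag this as ``the main obstacle'' but do not carry it out. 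Third, and most seriously, the case $k_0\geq 5$: your claim that the only proper subgroup of $\Sym_{k_0}$ ``compatible with the $(k_0{-}1)$-transitivity of $G$'' is $A_{k_0}$ does not follow from your setup, because $(k_0{-}1)$-transitivity of $G$ on $X$ does not transfer to $(k_0{-}1)$-transitivity of $H_A$ on $A$: the element of $G$ carrying one ordered $(k_0{-}1)$-tuple from $A$ to another need not stabilise $A$ setwise, so a genuine counting or Ramsey-type input is required before the subgroup lattice of $\Sym_{k_0}$ can be invoked, and the ensuing ``parity argument'' against an $A_{k_0}$-invariant is only named. This last case is genuinely hard; in the refinements the paper actually uses it corresponds to Theorem~\ref{4hom}(v), a separate theorem proved in \cite{mac0}. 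So the skeleton is sound, but the theorem is not yet proved by what you have written.
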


We use below a number of refinements of this result, listed together below.

\begin{theorem} \label{4hom}
Let $G$ be a  permutation group on an infinite set $X$. Then the following hold. 
\begin{enumerate}
\item[(i)] (J. MacDermott -- see  \cite[p. 63 (3.11)]{cameron2}) Suppose that  $(G,X)$ is 3-homogeneous but not 2-transitive. Then $G$ preserves a linear order on $X$.
\item[(ii)] (Cameron \cite[Theorem 3.3]{cameron4}) Let $(G,X)$ be  3-homogeneous, 2-transitive not 2-primitive. Then   $G$ preserves on $X$ a dense linear betweenness relation  or a structure elementarily equivalent to the universal homogeneous 2-regular $C$-set.
\item[(iii)] (Cameron \cite[Theorem 5.1]{cameron76})   Suppose that $G$ is 4-homogeneous but not 3-transitive. Then $G$ preserves a circular order or linear betweenness relation on $X$. 
\item[(iv)] (Cameron \cite[Remark p. 245]{cameron4} -- see \cite[Proposition 1.3]{mac0} for proof) If $G$ is 5-homogeneous, 3-transitive but not 3-primitive, then either $G$ preserves a separation relation on $X$ or $G$ preserves on $X$ a structure elementarily equivalent to the universal homogeneous 3-branching $D$-set.
\item[(v)] (\cite[Theorem 1.1]{mac0}) Suppose that $k\geq 5$ and $G$ is $(k-1)$-transitive but not $k$-transitive. Then $G$ is not $(k+3)$-homogeneous.
\end{enumerate}

\end{theorem}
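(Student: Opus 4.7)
Theorem~\ref{4hom} is a compilation of five results attributed to the literature, so my plan is to sketch each known argument rather than re-derive them. The unifying theme across (i)--(iv) is that $k$-homogeneity together with failure of $k$-transitivity forces the orbits of $G$ on ordered short tuples to have a very restricted combinatorial structure, which can be packaged into one of the geometric invariant relations axiomatised in \cite{AN1}. For (i), MacDermott's observation is that 3-homogeneity with failure of 2-transitivity forces $G$ to have exactly two orbits on ordered pairs of distinct elements, interchanged by the formal reverse map; verifying antisymmetry and transitivity of either orbit, using 3-homogeneity on triples, yields a $G$-invariant linear order. Part (iii) runs in parallel one level higher: 4-homogeneity restricts the orbits on ordered triples, and from the resulting orbit structure one reads off either the circular-order axioms or the linear-betweenness axioms of \cite[Part I]{AN1}.

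Parts (ii) and (iv) are Cameron's refinements inside a 2-transitive (respectively 3-transitive) action. For (ii), failure of 2-primitivity gives a nontrivial $G_x$-invariant equivalence relation on $M\setminus\{x\}$; 3-homogeneity together with 2-transitivity forces this relation either to be the convexity relation of a dense linear betweenness or to exhibit the nesting behaviour of cones at a point in a $C$-set, whereupon the axioms (C1)--(C4) can be verified directly. Part (iv) is analogous one level up: failure of 3-primitivity produces a $G_{xy}$-invariant equivalence relation on $M\setminus\{x,y\}$, and 5-homogeneity together with 3-transitivity forces either a separation relation or the axioms of a 3-branching $D$-relation.

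Part (v) is the most substantial, and where I expect the main obstacle lies. The argument of \cite{mac0} proceeds by contradiction: suppose $G$ is $(k-1)$-transitive, not $k$-transitive, yet $(k+3)$-homogeneous, and count the orbits of the pointwise stabilizer of $k-1$ chosen points on $4$-subsets of the remainder. The combinatorial constraint imposed by $(k+3)$-homogeneity, together with $(k-1)$-transitivity of the ambient action, forces an orbit coincidence that amounts to a pair-swap lifting $(k-1)$-transitivity to $k$-transitivity, contradicting the hypothesis. The delicate orbit-counting bookkeeping is what I would not attempt to reproduce by hand; I would appeal directly to the cited source.
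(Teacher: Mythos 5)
The paper offers no proof of Theorem~\ref{4hom} at all: each part is stated as a citation to MacDermott, Cameron, or \cite{mac0}, exactly as you ultimately do. Your heuristic sketches of (i)--(iv) are consistent with the cited arguments (in (i), for instance, the key step is indeed that 2-homogeneity without 2-transitivity yields a $G$-invariant tournament, and 3-homogeneity then rules out an embedded $C_3$ since not every triple of a 4-set can be cyclic), and your deferral of (v) to \cite{mac0} matches the paper's treatment, so this is essentially the same approach.
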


\section{Countably infinite set-homogeneous 3-hypergraphs}
In this section we first describe an example of a countably infinite set-homogeneous 3-hypergraph which is not 2-transitive (Subsection 3.1), thereby proving one direction of Theorem~\ref{3hyper}(i). We prove the other direction in  Section 3.2, and part (ii) in Section 3.3.  

\subsection{Construction of an example}
In this subsection we construct from a totally ordered $C$-set a countably infinite set-homogeneous 3-hypergraph $(M,E)$ whose automorphism group $G$ is not 2-transitive.  The ordered $C$-set has appeared previously, in various guises -- see for example the structure $\partial PT_3$ on p.162 of \cite{cameron-tree}, or the structure $\mathcal{M}_2$ in \cite[Section 3]{wom}, or $(\mathbb{L}, C,\prec)$ of \cite[Proposition 3.14]{bodirsky}.

Suppose that on the set $M$ there is defined a total order $\leq$ and a $C$-relation $C$. We say that the relations $C$ and $\leq$ are {\em compatible}, and that $(M,C,\leq)$ is a {\em  $(C,\leq)$-set} if all cones  and all sets $S_a$ (see Section 2.1) are convex with respect to $\leq$. However, we only consider the notion when $(M,C)$ is a 2-regular $C$-set, and under this condition compatibility is equivalent to the condition that whenever $x<y<z$ we have $C(x;y,z)\vee C(z;x,y)$ (cf. \cite[Section 3.5]{bodirsky}); we use the definition of compatibility in this latter form. 
Informally, 
if we draw a lower semilinear order  $(X,\preceq)$ in the plane in the natural way, with maximal chains never `crossing', then the natural left-to-right total order on the set of maximal chains is compatible with the $C$-relation defined above. We shall say that the $(C,\leq)$-set $(M,C,\leq)$ is {\em $2$-regular} if $(M,C)$ is $k$-regular, and is {\em strongly dense} if $(M,C)$ is proper as a $C$-set,  $(M,\leq)$ is a dense linear order without endpoints and in addition

\noindent
(C8) $C(x;y,z) \to \exists w_1,w_2[(C(w_1;y,z)\wedge C(x;y,w_1)\wedge w_1<\Min_{\leq}\{y,z\})\wedge (C(w_2;y,z)\wedge C(x;y,w_2)\wedge \Max_{\leq}\{y,z\}<w_2)]$.\\
One configuration witnessing (C8) is shown in Figure 2. Note that (C8) implies the density condition (C7).

\begin{figure}[H]
	\centering
	\begin{tikzpicture}
	
	\draw (0,0)--(4,3)node[above]{$z$};
	\draw (0,0)--(-1,2)node[above]{$x$};
	\draw (2.7,2)--(2,4)node[above]{$y$};
	\draw[dashed] (1.5,1.2)--(1,3)node[above]{$w_{1}$};
	\draw[dashed] (1.9, 1.4)--(4,2)node[above]{$w_{2}$};
	
	\end{tikzpicture}
	\caption{}
	
\end{figure}

The following result has been long known and attribution is difficult.

\begin{theorem} \label{2-reg}
There is up to isomorphism a unique countably infinite totally ordered strongly dense 2-regular $(C,\leq)$-set $(M,C,\leq)$.
\end{theorem}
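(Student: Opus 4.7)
The strategy will be a standard Fra\"iss\'e argument applied to a suitable class of finite structures. Let $\mathcal{K}$ be the class of finite $(C,\leq)$-structures in which $C$ satisfies (C1)--(C4), $\leq$ is a linear order, and the two are compatible (so $x<y<z$ implies $C(x;y,z)\vee C(z;x,y)$). It is convenient to recast $\mathcal{K}$ via the tree representation of finite $C$-sets: each $(A,C)\in\mathcal{K}$ corresponds to a finite rooted tree whose leaves are the points of $A$, where one may assume every internal node has at least two children beneath which there is a labelled leaf, and $C(x;y,z)$ reads off as ``the meet of $y$ with $z$ lies strictly above the meet of $x$ with either.'' The 2-regularity constraint forces each internal node of this reduced tree to have exactly two children; compatibility with $\leq$ corresponds to giving each internal node a left/right ordering of its children, so $\leq$ is the induced left-to-right order on the leaves. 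Thus $\mathcal{K}$ is, up to isomorphism, the class of finite labelled rooted planar binary trees.

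First I would verify the three Fra\"iss\'e conditions for $\mathcal{K}$. Hereditariness: restricting to a subset of leaves and then suppressing internal nodes that retain at most one branch with labelled descendants yields another finite planar binary tree, and the induced relations match. Joint embedding: attach $T_{A_1}$ and $T_{A_2}$ as the left and right children of a fresh root. For amalgamation, given $A,B_1,B_2\in\mathcal{K}$ with embeddings $f_i:A\hookrightarrow B_i$, I would view each $T_{B_i}$ as obtained from $T_A$ by attaching extra leaves at certain ``slots'' of $T_A$ (either refining an existing internal node, branching off in the middle of an edge, or extending past a leaf edge). Form $T_D$ by starting from $T_A$ and, at each slot, inserting the $B_1$-additions to the left of the $B_2$-additions, introducing fresh internal binary branchings as needed to keep the tree binary and planar. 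The resulting planar binary tree gives an amalgam $D\in\mathcal{K}$ in which the embeddings $B_i\hookrightarrow D$ commute with the $f_i$.

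Having shown that $\mathcal{K}$ is a Fra\"iss\'e class, Fra\"iss\'e's theorem gives a unique countable homogeneous structure $M$ with $\mathrm{Age}(M)=\mathcal{K}$. To conclude, I would verify that $M$ is 2-regular, satisfies (C5), (C6), (C8), and that $(M,\leq)$ is a dense linear order without endpoints, making $M$ a strongly dense 2-regular totally ordered $(C,\leq)$-set. Each of these properties reduces to exhibiting, for every finite $A\in\mathcal{K}$ and every desired 1-point extension type (a new point witnessing properness, density of cones to the left and right as in (C8), or density of $\leq$), an explicit extension in $\mathcal{K}$ realised by the appropriate planar binary tree; homogeneity of $M$ then realises the type inside $M$. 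Uniqueness of $M$ among countable structures satisfying the hypotheses of the theorem is then immediate: the hypotheses force the age to equal $\mathcal{K}$, and Fra\"iss\'e's uniqueness statement applies.

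The main technical obstacle will be amalgamation, specifically showing that one can freely interleave the new leaves from $B_1\setminus A$ and $B_2\setminus A$ in the planar tree without creating incompatibilities with either the $C$-axioms or the order. The tree/planar picture makes this concrete, and the uniform convention of placing $B_1$-additions to the left of $B_2$-additions at each slot provides a clean construction that automatically preserves both compatibility and binarity.
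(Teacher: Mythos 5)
Your route is genuinely different from the paper's (the paper constructs the structure as a dense set of maximal chains of a known semilinear order and cites a back-and-forth argument for uniqueness, whereas you build a Fra\"iss\'e class of finite planar binary trees), and the existence half of your argument is essentially sound, modulo two repairable slips: (a) as literally defined your class $\mathcal{K}$ admits ternary branch nodes --- three points with $\neg C(x;y,z)\wedge\neg C(y;x,z)\wedge\neg C(z;x,y)$ satisfy (C1)--(C4) and compatibility but do \emph{not} embed in a 2-regular $C$-set, so you must add the binarity condition to the definition of $\mathcal{K}$ explicitly rather than derive it; (b) your amalgamation recipe decides left/right interleaving but not the relative heights of new branch points that $B_1$ and $B_2$ both attach along the same edge of $T_A$ (any choice works, but the construction must make one).

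The genuine gap is in the uniqueness step. You write that uniqueness ``is immediate: the hypotheses force the age to equal $\mathcal{K}$, and Fra\"iss\'e's uniqueness statement applies.'' Fra\"iss\'e's uniqueness theorem asserts that two countable \emph{homogeneous} structures with the same age are isomorphic; equality of ages alone implies nothing (compare $(\mathbb{Z},<)$ and $(\mathbb{Q},<)$). The theorem you are proving does not assume homogeneity of $(M,C,\leq)$ --- only the axioms (C1)--(C6), (C8), 2-regularity, compatibility, and density of the order. So you must show that an \emph{arbitrary} countable structure satisfying these axioms realises every one-point extension in $\mathcal{K}$ over every finite substructure (i.e.\ is weakly homogeneous), after which back-and-forth identifies it with your Fra\"iss\'e limit. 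This is exactly the ``routine back-and-forth argument'' the paper delegates to the literature, and it is where all the axioms earn their keep: (C8) supplies new branch points below a given meet on either side of the order, properness and order-density supply the remaining extension types, and 2-regularity rules out extensions creating a third cone. Your proposal only verifies extension realisability \emph{in the limit $M$ using its homogeneity}, which is the wrong direction for uniqueness; also note that even the claim that the hypotheses force the age to be all of $\mathcal{K}$ is itself a consequence of this extension argument, not a starting point.
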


\begin{proof} We omit the details. For existence, the existence of the corresponding semilinear order is well-known (it is the countable `2-homogeneous tree' of `positive type' and `ramification order 2' from \cite{droste}). We may take any countable dense set of maximal chains from this semilinear order, with the natural induced $(C,\leq)$-set structure. The structure so obtained is denoted $\partial PT_3$ in \cite[p. 162]{cameron-tree}. Uniqueness can be proved by a routine back-and-forth argument. This is done explicitly in \cite[Theorem 4.6]{wom}, though some translation to the language of that paper is necessary. See also \cite[Proposition 3.14]{bodirsky} (where, as discussed with one of the authors, the condition `strongly dense' was inadvertently omitted).
\end{proof}
For the rest of this section, $(M,C,\leq)$ denotes the structure identified in Theorem~\ref{2-reg}.

We now define the edge relation $E$ to hold of a triple $xyz$ of distinct elements of $M$ if
$$\big(x<\{y,z\} \wedge C(x;y,z)\big)\vee\big(y<\{x,z\}\wedge C(y;x,z)\big)\vee \big(z<\{x,y\} \wedge C(z;x,y)\big).$$
(Here $x<\{y,z\}$ means $x<y \wedge x<z$.) Thus in Figure 2 $xyz$ is an edge but $w_2yz$ is not. Let $M_3$ be the hypergraph $(M,E)$, and put $G=\Aut(M_3)$.

\begin{proposition} \label{exist} The hypergraph $M_3=(M,E)$ is set-homogeneous, but $G$ is not 2-transitive on $M$.
\end{proposition}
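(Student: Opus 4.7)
The plan is to follow the general strategy of the paper, working with the known homogeneous structure $N=(M,C,\leq)$ on the same domain. The inclusion $\Aut(N)\leq \Aut(M_3)$ is immediate since $E$ is defined by a $(C,\leq)$-formula. For set-homogeneity, given an $E$-isomorphism $\sigma:U\to V$ between finite induced subhypergraphs, the aim is to show that the $(C,\leq)$-structures on $U$ and $V$ inherited from $M$ are isomorphic as $(C,\leq)$-structures (not necessarily via $\sigma$); then homogeneity of $(M,C,\leq)$ (Theorem~\ref{2-reg}) supplies some $g\in \Aut(N)\leq \Aut(M_3)$ with $U^g=V$.

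The heart of the argument is the combinatorial claim: the $E$-isomorphism type of a finite $U\subset M$ determines the isomorphism type of its inherited $(C,\leq)$-structure. I would prove this by induction on $|U|$, exploiting the underlying rooted binary tree of the $(C,\leq)$-structure. Compatibility with $\leq$ forces a root split $U=L\sqcup R$ with $L<R$. The key ``cross-triple rule'' is: for a triple with one element on one side of the split and two on the other, the apex is the singleton; and since the singleton is the $\leq$-minimum of the triple exactly when it lies in $L$, such a triple is an $E$-edge iff its singleton lies in $L$. The base case $|U|=3$ is immediate: the two $E$-types (edge, non-edge) correspond to the two compatible $(C,\leq)$-types (apex the minimum or the maximum). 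For $|U|\geq 4$, one locates the root split in the hypergraph via the cross-triple rule and recurses on $L$ and $R$.

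For the non-2-transitivity of $G=\Aut(M_3)$, I would fix $x_0<y_0$ in $M$ and exhibit the ordered pairs $(x_0,y_0)$ and $(y_0,x_0)$ as lying in distinct $G$-orbits. Using strong density (axiom (C8)) of $(M,C,\leq)$, one finds $z_0,w_0$ with $x_0<y_0<z_0<w_0$ whose inherited $(C,\leq)$-tree on $U_0=\{x_0,y_0,z_0,w_0\}$ has shape $\bigl(x_0,((y_0,z_0),w_0)\bigr)$. A direct computation gives exactly three $E$-edges $\{x_0,y_0,z_0\}$, $\{x_0,y_0,w_0\}$, $\{x_0,z_0,w_0\}$, so $x_0$ is the unique vertex contained in all three edges --- the ``star vertex''. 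By the combinatorial lemma, in any finite subhypergraph of $M$ of this same $E$-isomorphism type the star vertex is the $\leq$-minimum of the 4-set. So if some $g\in G$ swapped $x_0$ and $y_0$, then $g(x_0)=y_0$ would be the star vertex of $g(U_0)$, hence its $\leq$-minimum; but $g(U_0)\ni x_0$ and $x_0<y_0$, a contradiction.

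The main obstacle is the combinatorial step: proving that the induced $E$-structure on $U$ determines the inherited $(C,\leq)$-isomorphism type. Uniqueness of the root split from the hypergraph alone fails in the smallest case $|U|=3$ (although the $(C,\leq)$-iso type is still uniquely determined), so one must confirm that the inductive decomposition is unambiguous for all $|U|\geq 4$ --- this requires a careful local analysis of how subtree structure manifests in cross-triple patterns.
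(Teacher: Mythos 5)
Your overall strategy is the one the paper uses: reduce set-homogeneity to the claim that the $E$-isomorphism type of a finite $U\subset M$ determines its inherited $(C,\leq)$-isomorphism type, and then invoke homogeneity of $(M,C,\leq)$ together with $\Aut(M,C,\leq)\leq\Aut(M_3)$. Your non-$2$-transitivity argument is a correct variant of the paper's: the paper simply exhibits an $E$-formula defining $<$ (namely $y<z$ iff there are $u,v$ so that the only edges on $\{u,v,y,z\}$ are $uvz$ and $yvz$), whereas you use the $3$-edge $4$-set whose ``star vertex'' is forced to be the $\leq$-minimum. Both rest on the same inspection of the five $4$-vertex configurations, and yours is sound once the combinatorial lemma is available.

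The gap is in the inductive step of that combinatorial lemma. You assert that the root split $U=L\sqcup R$ can be located unambiguously from $E$ once $|U|\geq 4$; this is false. A split of the form $(\{x\},U\setminus\{x\})$ satisfies your cross-triple rule vacuously on the two-in-$L$ side, so it is a valid candidate split exactly when every triple through $x$ is an edge. For a complete $4$-vertex subhypergraph of $M$ (which exists, with tree shape a left caterpillar), \emph{every} singleton therefore yields a valid split, and dually for null subhypergraphs; more generally, whenever one cone at the root is a singleton, the set of vertices lying on all triples through them (the paper's set $S$, and dually $T$) can have size at least $2$, and then the decomposition is genuinely ambiguous. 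So the statement you propose to verify (``the inductive decomposition is unambiguous for all $|U|\geq 4$'') is not true, and as written the induction does not close. Two repairs are possible: (a) show that every valid split leads to the same $(C,\leq)$-isomorphism type --- true, but this is precisely the work you have deferred; or (b) do what the paper does, namely first peel off $S$ and $T$ by induction (these are exactly the degenerate caterpillar tails, and their positions in the tree are completely determined), and only then prove that the root partition satisfying the cross-triple condition is \emph{unique} under the additional hypothesis that both parts have size at least $2$ --- the hypothesis without which the paper's uniqueness argument would also fail.
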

\begin{proof}
For convenience, observe that the pictures in Figure 3 correspond to 4-sets carrying 0,1,2,3,4 hypergraph edges respectively.

\captionsetup[subfigure]{labelformat=empty}
\begin{figure}[H]
	\centering
	\begin{subfigure}[b]{0.25\textwidth}
		\centering

		\begin{tikzpicture}

		\draw (0,0)--(-1,1); \draw (0,0)--(1,.5);
		\draw (-.25,.25)--(.7,.7);
		\draw (-.5,.5)--(.5,1);
		\end{tikzpicture}

		\captionof{figure}{$0$ edges}
	\end{subfigure}
	\begin{subfigure}[b]{0.35\textwidth}
		\centering  
		\begin{tikzpicture}

		\draw (4,0)--(3,1); \draw (3.5,.5)--(4,1);
		\draw (4,0)--(5,1); \draw (3.75,.75)--(3.5,1);
		\end{tikzpicture}

		\captionof{figure}{$1$ edge}
	\end{subfigure}
	\begin{subfigure}[b]{0.25\textwidth}
		\centering

		\begin{tikzpicture}

		\draw (7,0)--(6,1); \draw (6.5,.5)--(6.8,1.2);
		\draw (7,0)--(8,1); \draw (7.5,.5)--(7.2,1.2);
		\end{tikzpicture}

		\captionof{figure}{$2$ edges}
	\end{subfigure}
	
	\begin{subfigure}[b]{0.35\textwidth}
		\centering  
		\begin{tikzpicture}

		\draw (10,0)--(9,1); \draw (10.5,.5)--(10,1);
		\draw (10,0)--(11,1); \draw (10.2,.75)--(10.5,1);
		\end{tikzpicture}

		\captionof{figure}{$3$ edges}
	\end{subfigure}
	\begin{subfigure}[b]{0.25\textwidth}
		\centering  
		\begin{tikzpicture}

		\draw (13,0)--(14,1); \draw (13,0)--(12,.5);
		\draw (13.25,.25)--(12.3,.7);
		\draw (13.5,.5)--(12.5,1);
		\end{tikzpicture}

		\captionof{figure}{$4$ edges}
	\end{subfigure}
	\caption{}
	
\end{figure}

First, we observe that the ordering $<$ on $M$ is $\emptyset$-definable in $(M,E)$ and hence $G$-invariant, since
for all distinct $y,z\in M$ we have $y<z$ if and only if there are distinct $u,v\in M\setminus \{y,z\}$ such that the only edges on $\{u,v,y,z\}$ are $uvz$ and $yvz$. Also (though we do not need this), the relation $C$ is definable from $E$: we have $C(x;y,z)$ if and only if
$$\big(y=z\neq x\big)\vee \big((x<\Min\{y,z\})\wedge E(x,y,z)\big) \vee \big((\Max\{y,z\}<x) \wedge (y\neq z) \wedge \neg E(x,y,z)\big).$$
Thus, $\Aut(M,E)=\Aut(M,C,\leq)$. 

We next show that $M_3$ is set-homogeneous. We prove by induction on $|U|$ that if $U$, $V$ are isomorphic finite substructures of $(M,E)$ then they are isomorphic as $(C,\leq)$-sets (possibly with a different isomorphism) and hence by homogeneity of $(M,C,\leq)$ there is $g\in G$ with $U^g=V$. This is immediate by inspection if $|U|\leq 3$.

So suppose that $U,V$ are isomorphic finite substructures of $(M,E)$ with $|U|=|V|\geq 4$, and let $\sigma:U \to V$ be an isomorphism. Define 
$$S=\{x\in U: \forall y, z \in U\setminus \{x\}(y\neq z \to E(x,y,z))\}$$
$$S'=\{x\in V: \forall y, z \in V\setminus \{x\}(y\neq z \to E(x,y,z))\}$$
$$T=\{x\in U: \forall y, z \in U\setminus \{x\}(y\neq z \to  \neg E(x,y,z))\}$$
$$T'=\{x\in V: \forall y, z \in V\setminus \{x\}(y\neq z \to  \neg E(x,y,z))\}.$$
Pictorially, $S$ and $T$ are as in Figure 4. Since $(U,E)\cong (V,E)$, we have $|S|=|S'|$ and $|T|=|T'|$, and also $\sigma(S)=S'$ and $\sigma(T)=T'$.

\begin{figure}[H]
	\centering
	\begin{tikzpicture}
	
	\draw (0,0)--(4,3);
	\draw (0,0)--(-.2,1.7);
	\draw (2,1.5)--(1.8,3);
	\draw (.8,.6)--(.6,2.4);
	\draw (1.5,1.15)--(1.3,2.6);
	\draw [decorate,decoration={brace,amplitude=10pt,raise=-4pt},yshift=0pt]
	(-.5,1.8) -- (2,3.2) node [black,midway,xshift=-0.3cm, yshift=0.4cm]  {\footnotesize $S$};
	\draw[dashed] (1.5,1.4)--(2,1.8);
	\draw (2.7,2)--(2.5,3.1);
	%\draw (2.7,2)--(2.5,3.1);
	\draw (2.6,2.5)--(2.8,3);
	\draw (2.2,3.1)--(2.5,3);
	
	\draw (3.5,2.6)--(3.3,3.3);
	\draw (3.4,2.95)--(3.7,3.3);

	\draw (10,0)--(6,3);
	\draw (10,0)--(9.8,1.7);
	\draw (9.2,.6)--(9,2.2);
	\draw (8.5,1.15)--(8.2,2.6);
	\draw (7.9,1.6)--(7.5,3);
	\draw (7.1,2.2)--(6.8,3.3);
	\draw (6.95,2.75)--(6.5,3.3);
	\draw (6.4,2.7)--(6.1,3.5);
	\draw (6.25,3.1)--(5.8,3.5);
	
	\draw [decorate,decoration={brace,amplitude=10pt, mirror, raise=-4pt},yshift=0pt]
	(10.1,1.8) -- (7.5,3.2) node [black,midway,xshift=.3cm, yshift=0.5cm]  {\footnotesize $T$};
	\draw[dashed] (8.5,1.4)--(7.75,2.1);
	\end{tikzpicture}
	\caption{}
\end{figure}

Suppose first that $S \neq \emptyset$. Let $S=\{a_1,\ldots,a_r\}$ with $a_1<\ldots <a_r$. Then $a_i<b$ for any $i\in \{1,\ldots,r\}$ and $b \in U\setminus S$ and $C(a_i;a_j,a_k)$ whenever $i<j<k\leq r$, and $C(a_i;a_j,b)$ whenever
$i<j\leq r$ and $b\in U\setminus S$; also $C(a_i;b,c)$ for any $i\in \{1,\ldots,r\}$ and $b,c\in U\setminus S$. The corresponding assertions hold for $S'$ and $V$. Since $\sigma(S)=S'$ we have $\sigma(U\setminus S)=V\setminus S'$. Thus 
$U\setminus S$ and $V\setminus S'$ carry isomorphic hypergraphs of size smaller than $|U|$, so by induction carry isomorphic $(C,\leq)$-sets. It follows that $U$ and $V$ carry isomorphic $(C,\leq)$-sets, so by homogeneity of $(M,C,\leq)$ there is $h \in \Aut(M,C,\leq)$ with $U^h=V$. Such $h$ also preserves $E$ (since $E$ is definable in terms of $C,\leq$), so $h\in G$. 

Thus, we may suppose $S=\emptyset$, and similarly $T=\emptyset$.
There is a partition of $U$ into two  parts $P,Q$, with the following property:

\noindent
(*) $|P|\geq 2$, $|Q|\geq 2$, and for all $p\in P$ and distinct $q,q'\in Q$ we have $E(p,q,q')$, and for all $q\in Q$ and distinct $p,p'\in P$ we have $\neg E(p,p',q)$.\\
 Indeed, in the underlying tree structure induced from 
$(M,C,\leq)$, $P$ will be the left hand cone at the root, and $Q$ the right hand cone. In particular, $P<Q$.

We claim that this is the unique two-part partition of $U$ satisfying (*). Indeed, suppose that $U=P'\cup Q'$ is another such partition, and that $P'$ contains $p,p'$ where $p$ lies in the left hand cone $P$ and $p'$ in the right hand cone $Q$. Now $Q\subseteq P'$; for if $q\in Q\cap Q'$ then $E(p,q,p')$ contradicting that  $q\in Q'$ and $p,p'\in P'$. Thus, if $p''\in Q\setminus \{p'\}$ and $q \in Q'$ then $p''\in P'$ and $q\in P$, so  $E(q,p',p'')$ by (*) applied to $(P,Q)$. This however  contradicts (*) for $P',Q'$. 
%So $P'\subseteq P$, and similarly $Q'\subseteq Q$, forcing $P'=P$ and $Q'=Q$. 

It follows that $\sigma$ maps $P$ to the left hand cone $P^*$ of $V$ and $Q$ to the right hand cone $Q^*$ of $V$. In particular the hypergraphs induced on $P$ and $P^*$ are isomorphic, as are those on $Q$ and $Q^*$. Since $|P|<|U|$ and $|Q|<|U|$, by induction
the structures induced on $P$ and $P^*$ are isomorphic as $(C,\leq)$-sets, as are those on $Q$ and $Q^*$. It follows that the $(C,\leq)$-structures induced on $U$ and $V$ are isomorphic, so again by homogeneity of $(M,C,\leq)$ there is $h\in \Aut(M,C,\leq)$ with $U^h=V$, and such $h$ lies in $G$. 
\end{proof}

\subsection{Classification in the not-2-transitive case}
The goal of this subsection is to show that $M_3$ is the {\em unique}  countable set-homogeneous 3-hypergraph whose automorphism group is not 2-transitive. The idea is to recover from $E$ the relations $C$ and $\leq$, show that $(M,C,\leq)$ is a strongly dense 2-regular $(C,\leq)$-set, and apply Theorem~\ref{2-reg}. The first three lemmas below hold for finite as well as countably infinite structures. First, we record the following easy lemma, used throughout the paper. 

\begin{lemma}\label{countedges}
Let $(N,E)$ be a $(k+1)$-set-homogeneous $k$-hypergraph. Then if $U,V\subset N$ with $|U|=|V|=k+1$ and $U$ and $V$ carry the same number of edges of $(N,E)$, there is $g\in \Aut(N,E)$ with $U^g=V$.
\end{lemma}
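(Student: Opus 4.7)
The plan is to reduce the lemma immediately to the definition of $(k+1)$-set-homogeneity by observing that, on a $(k+1)$-set, the isomorphism type of an induced $k$-hypergraph is completely determined by its edge count. Once this reduction is made, the conclusion is a one-line appeal.

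Concretely, the key observation is that if $|U| = k+1$ then every $k$-subset of $U$ has the form $U\setminus\{v\}$ for a unique $v \in U$. So the induced $k$-hypergraph structure on $U$ is encoded by the single subset
\[
\Omega(U) := \{v \in U : U\setminus\{v\} \in E\} \subseteq U,
\]
and the number of edges that $U$ carries is exactly $|\Omega(U)|$. An entirely analogous statement holds for $V$, with $\Omega(V) \subseteq V$.

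Now suppose $U$ and $V$ carry the same number of edges, so $|\Omega(U)| = |\Omega(V)|$. Choose any bijection $\tau : U \to V$ with $\tau(\Omega(U)) = \Omega(V)$. I claim $\tau$ is an isomorphism of the induced sub-hypergraphs: for $v \in U$, the $k$-subset $U\setminus\{v\}$ is an edge iff $v \in \Omega(U)$ iff $\tau(v) \in \Omega(V)$ iff $V\setminus\{\tau(v)\} = \tau(U\setminus\{v\})$ is an edge. Thus $U$ and $V$ carry isomorphic induced $k$-hypergraphs, and by $(k+1)$-set-homogeneity of $(N,E)$ there is $g \in \Aut(N,E)$ with $U^g = V$, as required.

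There is no real obstacle: the argument is just the observation that $k$-hypergraphs on $k+1$ vertices form a one-parameter family indexed by number of edges (equivalently, by the complementary vertex-subset $\Omega$), so ``isomorphic'' and ``same edge count'' coincide.
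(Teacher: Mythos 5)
Your proof is correct and is essentially the paper's own argument: the paper works with the intersection $S_U$ of the edges of $U$ (which is exactly the complement $U\setminus\Omega(U)$ of your set $\Omega(U)$) and takes a bijection carrying $S_U$ to $S_V$, which is the same as your bijection carrying $\Omega(U)$ to $\Omega(V)$. The only cosmetic difference is that your formulation also handles the zero-edge case uniformly, which the paper treats as implicit.
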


\begin{proof} Suppose that $U$ and $V$ each have $i>0$ edges. Each edge of $U$ omits one element of $U$,  so the intersection of the edges of $U$ is a subset $S_U$ of size $k+1-i$, and similarly the intersection $S_V$ of the edges of $V$ has size $k+1-i$. It follows that any bijection $\sigma:U \to V$ with $\sigma(S_U)= S_V$ gives an isomorphism $(U,E) \to (V,E)$, and by $(k+1)$-set-homogeneity some such $\sigma$ lifts to $g\in \Aut(N,E)$. 
\end{proof}

The following combinatorial lemma is probably well-known.

\begin{lemma} \label{7vertices}
Let $(T,\to)$ be a ${\leq}2$-homogeneous tournament  such that for each vertex $x$, the 3-cycle $C_3$ embeds in the subtournament $x^+$. Then $C_3$ embeds in each $x^-$.
\end{lemma}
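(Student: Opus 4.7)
My approach is to use that $\leq 2$-homogeneity for tournaments is equivalent to being both vertex- and arc-transitive. By vertex-transitivity it suffices to produce a single vertex $a$ whose in-neighbourhood $a^-$ contains a copy of $C_3$. I choose $a$ using the hypothesis: applying it at an arbitrary vertex $v$ produces a 3-cycle $a\to b\to c\to a$ inside $v^+$, giving a ``$T_4^+$-configuration'' $\{v,a,b,c\}$ in which $v$ dominates the cycle. I then focus on $a$; since $v\to a$ and $c\to a$, we have $\{v,c\}\subseteq a^-$, so $|a^-|\ge 2$.

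Now I argue by contradiction. Assume $a^-$ contains no $C_3$. Since a tournament without 3-cycles is transitive (i.e.\ a strict linear order under $\to$), the induced subtournament on $a^-$ is a linear order, which I denote $(a^-,<_a)$. Next I apply arc-transitivity: for any $u,u'\in a^-$ the arcs $u\to a$ and $u'\to a$ lie in a single $\Aut(T)$-orbit, so there is $\phi\in\Aut(T)$ with $\phi(a)=a$ and $\phi(u)=u'$. Hence the point-stabiliser $\Aut(T)_a$ acts transitively on $a^-$, and since each such $\phi$ preserves the tournament structure on $a^-$, it acts on $(a^-,<_a)$ as an order-automorphism.

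For finite $T$ this closes the argument immediately: the automorphism group of any finite strict linear order is trivial, so the transitive action of $\Aut(T)_a$ on a set of size at least $2$ must be the trivial action, which is impossible. Hence $a^-$ contains a 3-cycle, and by vertex-transitivity $C_3$ embeds in $x^-$ for every vertex $x$.

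The hard part is the infinite case, since an infinite strict linear order can admit a highly transitive group of order-automorphisms (for example $\mathbb{Q}$), so the finite-order collapse no longer applies. To handle it I would apply 2-homogeneity to arcs internal to $a^-$ — all arcs of $T$ lying in a single $\Aut(T)$-orbit — and combine this with the explicit $T_4^+$-witness in $v^+$, iteratively producing vertices inside $a^-$ via automorphisms of $T$ until a triple forming a 3-cycle must appear, contradicting the assumed linearity of $a^-$. I expect this infinite step to be the main obstacle, whereas the finite case is where the essential content lies.
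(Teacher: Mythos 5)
Your argument has a genuine gap: the infinite case is not proved, only announced as an intention. The lemma is stated for arbitrary ${\leq}2$-homogeneous tournaments and is applied in the paper (in Lemma~\ref{linear}) to countably infinite tournaments, so the infinite case is not an optional refinement but the main point of the statement. Your reduction is sound as far as it goes -- vertex- and arc-transitivity do follow from ${\leq}2$-homogeneity, the stabiliser $\Aut(T)_a$ does act transitively on $a^-$, and the rigidity of finite linear orders does dispose of the finite case -- but when $a^-$ is infinite the induced linear order can perfectly well admit a transitive group of order-automorphisms (as $(\mathbb{Q},<)$ does), and nothing in your sketch forces a $3$-cycle into $a^-$. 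The specific difficulty your plan glosses over is this: arc-transitivity guarantees that every arc $u\to u'$ inside $a^-$ lies on a copy of $C_3$, but the third vertex of that copy need not lie in $a^-$, so ``iteratively producing vertices inside $a^-$'' requires a concrete mechanism for pinning the new vertices into $a^-$, and that mechanism is precisely the missing content.

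The paper's proof supplies exactly this mechanism, uniformly in the cardinality of $T$ and with no case split. Starting from a vertex $1$ dominating a $3$-cycle $2\to 3\to 4\to 2$, it uses the fact (from arc-transitivity) that every arc lies on a copy of $C_3$ -- and, after one further counting observation, on at least two such copies -- to introduce three new vertices $5,6,7$ completing the arcs $1\to 3$, $1\to 2$, $1\to 4$ to $3$-cycles. Each unresolved orientation among $\{2,\ldots,7\}$ is then forced by the standing assumption that no $x^-$ contains a $C_3$, and the forced orientations yield $7\to 6\to 5\to 7$ with $5,6,7\in 1^-$, a contradiction. Your finite-case argument is a legitimately different (and shorter) route for finite $T$, but as a proof of the lemma as stated the proposal is incomplete; your closing remark that the finite case is ``where the essential content lies'' has it backwards.
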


\begin{proof} We assume for a contradiction that $C_3$ does not embed in some (equivalently, any) $x^-$. 
First, $(T,\to)$  has a substructure $\{1,2,3,4\}$  where $2,3,4\in 1^+$ and $2 \to 3 \to 4 \to 2$. By 2-homogeneity the arc $1 \to 3$ lies in a copy of $C_3$ so there is an element $5\in T$ with $3 \to 5 \to 1$. Clearly $5\not\in \{1,2,3,4\}$. We have $4 \to 5$ (otherwise 135 is a $C_3$ in $4^-$) and 
$5 \to 2$ (as otherwise  234 is a $C_3$ in $5^-$).

Similarly, the arc $1 \to 2$ lies in a $C_3$, say $1 \to 2 \to  6 \to 1$. Again, clearly $6\not\in \{1,\ldots,5\}$. We have $3 \to 6$ as otherwise the $C_3$ 126 lies in $3^-$. Similarly, $6 \to 4$ as otherwise 234 is a $C_3$ in $6^-$.

Observe that the arc $2 \to 3$ lies on at least two copies of $C_3$, namely 234 and 235. It follows by 2-homogeneity of $\Aut(T,\to)$ on $T$ that every arc lies on at least two copies of $C_3$.

Next, the arc $1 \to 4$ lies in a $C_3$, say $4\to 7 \to 1 \to 4$. By the last paragraph we may suppose $7\neq 5$, so $7\not\in \{1,2,\ldots,6\}$. We have $2 \to 7$ as otherwise  147 is a $C_3$ in $2^-$, and $7 \to 3$ as otherwise 234 is a $C_3$ in $7^-$.

Next, we determine the orientation of the arcs among 5,6,7. We have $7 \to 6$ as otherwise 426 is a $C_3$ in $7^-$. Also 
$5\to 7$ as otherwise 347 is a $C_3$ in $5^-$. And $6 \to 5$ as otherwise 235 is a $C_3$ in $6^-$.

Thus, $7 \to 6 \to 5 \to 7$ is a $C_3$. This lies in $1^-$, which is the final contradiction.
\end{proof}

\begin{lemma} \label{linear}
Let $(N,E)$  be a ${\leq}4$-set-homogeneous 3-hypergraph such that $K=\Aut(N,E)$ is not 2-transitive. Then $K$ preserves a linear order on $N$.
\end{lemma}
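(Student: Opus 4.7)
The plan is to extract a $K$-invariant tournament from the homogeneity hypotheses and then show this tournament must in fact be a linear order. By 1-set-homogeneity $K$ is transitive on $N$, and by 2-set-homogeneity transitive on unordered pairs; since $K$ is not 2-transitive, its two orbits on ordered pairs are paired by reversal, giving a $K$-invariant tournament $(N,\to)$ on which $K$ acts arc-transitively (so $(N,\to)$ is $\leq 2$-homogeneous in the tournament sense). It suffices to prove that $(N,\to)$ has no $C_3$-subtournament, for then $\to$ is transitive and hence a linear order on $N$.

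Suppose for contradiction that $(N,\to)$ contains a $C_3$. By 3-set-homogeneity $K$ has at most two orbits on 3-subsets (edges and non-edges), and since $K$ preserves $\to$ each carries a constant tournament type in $\{T_3, C_3\}$. ``Both $T_3$'' makes $\to$ transitive, contradicting the assumption; ``both $C_3$'' forces every 4-subset to have four $C_3$-triples, impossible since a 4-tournament has at most two. The remaining two possibilities are swapped by the hypergraph complementation (which preserves $K = \Aut(N,E)$), so we may assume $E$ is exactly the set of $C_3$-triples of $(N,\to)$.

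Now apply $\leq 4$-set-homogeneity. By Lemma~\ref{countedges}, for each $i$ the 4-subsets with $i$ edges form a single $K$-orbit and share a fixed 4-tournament iso-type; hence $i\in\{0,1,2\}$, and for $i=1$ the type is either source-over-$C_3$ or sink-over-$C_3$. By Lemma~\ref{7vertices} applied to both $\to$ and $\to^{op}$ together with $K$-transitivity on $C_3$-triples, the statements ``every $C_3$ has a source'' and ``every $C_3$ has a sink'' are equivalent. Either (A) both hold, in which case $i=1$ 4-subsets of both tournament types exist; since they must lie in a common $K$-orbit, some $\gamma\in K$ sends a source-configuration (in-degree sequence $(0,2,2,2)$) to a sink-configuration $(1,1,1,3)$, forcing $\gamma$ to reverse $\to$ on that 4-subset; but every element of $K$ either preserves or globally reverses $\to$ (as $K$ merely permutes the two orbits on ordered pairs), so $\gamma$ is a self-complementation of $(N,\to)$, merging the two orbits and making $K$ 2-transitive --- contradicting the hypothesis. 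Or (B) neither holds, so no $v^+$ or $v^-$ contains a $C_3$ and $(N,\to)$ is a local order.

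To finish we rule out case (B). Arc-transitivity of $K$ implies $K_x$ acts transitively on the linear order $(x^+,\to)$, so $(x^+,\to)$ is a countable linear order with a transitive automorphism group, hence is a singleton, $\mathbb{Z}$, or $\mathbb{Q}$. The singleton case forces $(N,\to)$ to have out-degree $1$ everywhere; vertex-transitivity together with the existence of $C_3$'s then requires the functional graph to be a disjoint union of $3$-cycles, but two distinct $3$-cycles produce contradictory arcs (each vertex having only one out-neighbour), so $N$ reduces to a single $3$-cycle, contradicting countable infinity. The $\mathbb{Z}$ case is excluded by 3-set-homogeneity: pairs $a<b$ in $x^+$ split into consecutive and non-consecutive classes (a $K$-invariant, $\to$-definable distinction), giving two distinct $K$-orbits on the non-edge triples $\{x,a,b\}$, contradicting the uniqueness of the non-edge orbit. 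Hence $x^+\cong\mathbb{Q}$ and dually $x^-\cong\mathbb{Q}$, so by Cameron's Theorem~6.2 of \cite{cameronII} the tournament $(N,\to)$ is the dense local order $S(2)$. Since $S(2)$ admits a self-complementation and $E$ is reversal-invariant, this self-complementation lies in $\Aut(N,E) = K$; as in case (A), $K$ is then 2-transitive, the desired contradiction. The main obstacle in this plan is case (B), specifically excluding the $\mathbb{Z}$-valued $x^+$ and invoking Cameron's characterisation to reduce to $S(2)$.
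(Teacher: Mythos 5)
Your overall strategy coincides with the paper's: extract the invariant tournament, reduce to the case where the hypergraph edges are exactly the $C_3$-triples, use Lemma~\ref{7vertices} to show that ``some $x^+$ contains a $C_3$'' and ``some $x^-$ contains a $C_3$'' stand or fall together, kill the case where both hold via Lemma~\ref{countedges} (two tournament-inequivalent one-edge $4$-sets that would have to lie in one orbit), and kill the local-order case by identifying $(N,\to)$ with the dense local order and importing its order-reversing symmetry into $K$. Case (A) and its treatment are essentially identical to the paper's argument.

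There is, however, a genuine flaw in case (B), at the sentence asserting that $(x^+,\to)$, being a countable linear order with a transitive automorphism group, ``is a singleton, $\mathbb{Z}$, or $\mathbb{Q}$.'' That trichotomy is false: lexicographic products such as $\mathbb{Q}\times\mathbb{Z}$ or $\mathbb{Z}\times\mathbb{Z}$ are countable, admit transitive automorphism groups, and are none of the three. Consequently, ruling out the singleton and the $\mathbb{Z}$ case does not entitle you to conclude $x^+\cong\mathbb{Q}$, and Cameron's Theorem 6.2 cannot yet be invoked. Fortunately the argument you deploy against $\mathbb{Z}$ already closes the gap: all non-edges form a single $K$-orbit, every triple $\{x,a,b\}$ with $a,b\in x^+$ is a non-edge with unique source $x$, and any automorphism carrying one such triple to another carries source to source and preserves $\to$; hence consecutive and non-consecutive pairs cannot coexist in $x^+$, which forces $x^+$ to be dense as soon as $|x^+|\geq 3$. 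Combined with the rigidity of finite linear orders (which is how the paper dispatches all finite $x^+$ of size at least $2$ at once, not just the singleton case), this yields exactly the statement the paper derives directly from $3$-set-homogeneity: each $x^{\pm}$ is a singleton or a dense order without endpoints. One further small point: the paper states that this lemma holds for finite as well as infinite hypergraphs (and Lemma~\ref{2trans} relies on the finite case), so the appeal to ``contradicting countable infinity'' in the singleton sub-case should be replaced by the observation that $N\cong C_3$ forces $\Aut(N,E)=S_3$, which is $2$-transitive.
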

\begin{proof} By set-homogeneity, $K$ is 2-homogeneous on  $N$. Since $K$ is not 2-transitive it preserves a tournament relation $\to$ on $N$; here the set of pairs $(a,b)$ with $a \to b$ is one of the two $K$-orbits on ordered pairs of distinct elements. We assume $\to$ is not a total order, so $(N,\to)$ embeds a copy of $C_3$. Now $|N|\geq 4$, since otherwise $(N,\to)\cong C_3$ and $\Aut(N,E)=S_3$ which is 2-transitive. Thus, $(N,\to)$ embeds paths of length 3 as well as $C_3$, and we may suppose that the copies of $C_3$ are the edges of $(N,E)$. By transitivity of $K$ on $N$,  if for {\em some} $a\in X$ we have that $a^+$ (respectively $a^-$) embeds a copy of $C_3$, then this holds for all $x\in N$. 

If neither $a^+$ nor $a^-$ embeds a copy of $C_3$, then $(N,\to)$ is a local order (see Section 2.1). Easily, if $|a^+|=1$ for all $a$ then $|a^-|=1$ for all $a$ and we have  $(N,\to)\cong C_3$, a contradiction as above. Thus, using rigidity of finite total orders we may assume that each $a^+$ and $a^-$ is infinite. By 3-set-homogeneity each set $a^+$ and $a^-$ is densely ordered, so by \cite[Theorem 6.2]{cameronII} $(N,\to)$ 
 is isomorphic to the dense local order $(S,\to)$ (one of the three homogeneous tournaments classified by Lachlan in \cite{lachlantourn}). Now as noted in Section 2.1, there is a permutation $\gamma$ of $N$ (of order 2) which induces an isomorphism from $(N,\to)$ to its reverse. Now $\gamma$ preserves $E$, so $\gamma\in K$, contradicting that $\to$ is $K$-invariant. 

Thus, we may suppose that each $x^+$ embeds $C_3$. It follows by Lemma~\ref{7vertices} that each $x^-$ also embeds $C_3$. This however means that $(N,\to)$  contains two non-isomorphic 4-vertex tournaments each giving a hypergraph with exactly one edge (a copy of $C_3$ dominated by a vertex, and a copy of $C_3$ dominating a vertex). This is impossible by Lemma~\ref{countedges}. 

\end{proof}

We aim to show that any countably infinite set-homogeneous but not 2-homogeneous 3-hypergraph $(N,E)$ is isomorphic to $M_3$.
Consider such $(N,E)$, and let $K=\Aut(N,E)$. By Proposition~\ref{linear}, $K$ preserves a linear order $\leq$ on $N$, and by 2-homogeneity, this order is dense without endpoints. Our goal is to reconstruct the $C$-relation on $N$.

Let $U=\{x,y,z,w\}$ be a 4-vertex substructure of the structure $(M,E,\leq)$ from Section 3.1, with $x<y<z<w$. It is easily checked (compare Figure 3) that if $U$ has one edge then this is $xyz$, if $U$ has 2 edges then these are $xzw$ and $yzw$, and if $U$ has 3 edges then these are all except $yzw$. Also, all these cases are realised. We first aim to recover this behaviour on $(N,E)$ (of course, up to reversal of the ordering). Note that in $N$, by Lemma~\ref{countedges} any two 4-vertex substructures with the same number of edges are in the same $K$-orbit, so the edges must be distributed in the same way with respect to the order, that is, the structures are isomorphic as $(E,\leq)$-structures. 

We observe first that $(N,E)$ contains both edges and non-edges, since otherwise $K={\rm Sym}(N)$ and so is 2-transitive.
We shall  amalgamate edges and non-edges in all possible ways  over 2-sets. Since we use similar arguments for 4-hypergraphs in the next section, and the arguments may be applicable for $k$-hypergraphs for larger $k$, we set them up in greater generality in the next two lemmas. 

\begin{lemma} \label{5verts}
Let $k\geq 4$ and let $(H,E)$ be a ${\leq}(k+1)$-set-homogeneous countably infinite $k$-hypergraph whose automorphism group  preserves a  total order $<$ on $H$. 
\begin{enumerate}
\item[(i)] Let $i,j\in \{1,\ldots,k+1\}$ such that $|i-j|\geq 2$. Then there are $u_1<\ldots<u_{k+1}$ in $H$ such that
$\{u_1,\ldots,u_{k+1}\}\setminus \{u_i\}$ is an edge and $\{u_1,\ldots,u_{k+1}\}\setminus \{u_j\}$ is a non-edge.
\item[(ii)] Let $i\in \{1,\ldots,k\}$. Then there are $u_1<\ldots < u_{k+1}$ in $H$ such that exactly one of $\{u_1,\ldots,u_{k+1}\}\setminus \{u_i\}$ and $\{u_1,\ldots,u_{k+1}\}\setminus \{u_{i+1}\}$ is an edge.
\end{enumerate}
\end{lemma}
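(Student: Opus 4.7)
The plan is to prove both parts by contradiction, each time reducing failure to the conclusion that every ordered $k$-tuple of $H$ is an edge, contradicting the fact that both edges and non-edges exist. The latter is forced because $G:=\Aut(H,E)$ preserves $<$, hence $G\neq\Sym(H)$, so $E$ is nontrivial. As preliminary setup I first note that 2-set-homogeneity plus order-preservation makes $G$ transitive on ordered pairs, so $(H,<)\cong(\mathbb{Q},<)$, and that for $k\geq 4$ every $(k-1)$-subset carries no hypergraph structure, so the same reasoning applied to $(k-1)$-set-homogeneity yields transitivity of $G$ on ordered $(k-1)$-tuples.

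For part (ii), suppose (ii) fails at some $i$, so that for every $(k+1)$-tuple $u_1<\ldots<u_{k+1}$ the $k$-subsets $\{u_1,\ldots,u_{k+1}\}\setminus\{u_i\}$ and $\{u_1,\ldots,u_{k+1}\}\setminus\{u_{i+1}\}$ have the same edge status. I would show that the edge status of a $k$-tuple $(v_1,\ldots,v_k)$ depends only on the $(k-1)$-tuple $(v_1,\ldots,v_{i-1},v_{i+1},\ldots,v_k)$: for any $w$ with $v_{i-1}<w<v_{i+1}$ and $w\neq v_i$ (with the convention $v_0=-\infty,v_{k+1}=+\infty$), one of the $(k+1)$-tuples $(v_1,\ldots,v_{i-1},v_i,w,v_{i+1},\ldots,v_k)$ or $(v_1,\ldots,v_{i-1},w,v_i,v_{i+1},\ldots,v_k)$ (according to whether $w>v_i$ or $w<v_i$) witnesses, via the hypothesis, that replacing $v_i$ by $w$ does not change the status. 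Transitivity on ordered $(k-1)$-tuples then forces the status to be constant, giving the desired contradiction.

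For part (i), suppose (i) fails for some $(i,j)$ with $|i-j|\geq 2$; by symmetry I treat $i<j$. The hypothesis gives the key operation (A): for every edge $(a_1,\ldots,a_k)$ and every $w\in(a_{i-1},a_i)$ (with $a_0:=-\infty$), the $k$-tuple $(a_1,\ldots,a_{i-1},w,a_i,\ldots,a_{j-2},a_j,\ldots,a_k)$ obtained by deleting $a_{j-1}$ and inserting $w$ at position $i$ is again an edge; this follows by applying the hypothesis to the $(k+1)$-tuple formed by inserting $w$ at position $i$ of $(a_1,\ldots,a_k)$. I would then show that every ordered $k$-tuple $(x_1,\ldots,x_k)$ is an edge. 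Given any known edge $(\alpha_1,\ldots,\alpha_k)$, transitivity on ordered $(k-1)$-tuples provides $g\in G$ carrying the omit-position-$(j-1)$ subtuple $(\alpha_1,\ldots,\alpha_{j-2},\alpha_j,\ldots,\alpha_k)$ onto $(x_1,\ldots,x_{i-1},x_{i+1},\ldots,x_k)$; since $g$ preserves $<$ and $\alpha_{j-1}\in(\alpha_{j-2},\alpha_j)$, setting $a_{j-1}:=g(\alpha_{j-1})$ gives $a_{j-1}\in(x_{j-1},x_j)$, and the image of the full edge is $e:=(x_1,\ldots,x_{i-1},x_{i+1},\ldots,x_{j-1},a_{j-1},x_j,\ldots,x_k)$. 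Applying (A) to $e$ with $w=x_i\in(x_{i-1},x_{i+1})$ outputs precisely $(x_1,\ldots,x_k)$, so this is an edge, contradicting existence of non-edges. The case $i>j$ is symmetric, with the analogous operation deleting $a_j$ instead of $a_{j-1}$. The main subtlety will be the combinatorial bookkeeping to ensure the constructed $e$ is a valid input to operation (A) and that (A) then reconstructs the target $(x_1,\ldots,x_k)$ exactly.
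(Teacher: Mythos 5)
Your proof is correct, but it takes a genuinely different route from the paper's. The paper proves both parts directly in one step: fix a non-edge $w_1<\cdots<w_k$ and adjoin a single new vertex $v$ in a suitable gap so that a prescribed $k$-subset of $\{w_1,\ldots,w_k,v\}$ is an edge; the resulting $(k+1)$-set then exhibits the required edge and non-edge (or the two $k$-subsets of differing status for (ii)) simultaneously --- for (i) with $i<j$, $v$ is placed in $(w_{j-1},w_j)$ so that the new edge omits $w_i$ while the surviving non-edge $w_1\cdots w_k$ omits position $j$. The existence of such a $v$ rests on exactly the observation you isolate as a preliminary: $(k-1)$-sets carry no induced hypergraph structure, so ${\leq}(k+1)$-set-homogeneity together with order-preservation makes $\Aut(H,E)$ transitive on increasing $(k-1)$-tuples, and one transports a genuine edge so that $k-1$ of its vertices land on the chosen $(k-1)$-subset of the non-edge, the remaining vertex necessarily landing in the prescribed interval. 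Your contrapositive arguments --- failure of (ii) makes edge-status independent of the $i$-th coordinate, and failure of (i) yields the deletion/insertion operation (A) which propagates edge-status from one edge onto every increasing $k$-tuple --- both correctly reduce to the triviality of $E$, which is excluded because $\Sym(H)$ preserves no linear order on an infinite set. I checked the index bookkeeping in your part (i) and it does close up, including the boundary case $j=k+1$ under the convention $x_{k+1}=+\infty$. What the paper's version buys is brevity and an explicit witness; what yours buys is that the reliance on transitivity on ordered $(k-1)$-tuples and on the nontriviality of $E$ is made completely explicit rather than left implicit in an existence assertion.
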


\begin{proof} (i) First choose a non-edge $w_1\ldots w_k$ of $H$ with $w_1<\ldots<w_k$. Suppose first $i<j$. There is $v$ such that $w_{j-1}<v<w_j$ and $\{v,w_1,\ldots,w_k\}\setminus \{w_i\}$ is an edge. 
In the case when $j<i$, choose $v$ with $w_{j-1}<v<w_j$ such that  $\{v,w_1,\ldots,w_k\}\setminus \{w_{i-1}\}$ is an edge.
In both cases put $u_m=w_m$ for $m<j$, $u_j=v$, and $u_m=w_{m-1}$ for $m>j$ to obtain the required set. 

(ii) Again  choose a non-edge  $w_1\ldots w_k$ with $w_1<\ldots<w_k$. Choose $v$ with $w_{i-1}<v<w_{i+1}$ such that
$\{w_1,\ldots,w_k,v\}\setminus \{w_i\}$ is an edge, and list $w_1,\ldots,w_k,v$ in increasing order as $u_1,\ldots,u_{k+1}$. Note that whether or not $v<w_i$ is undetermined. 
\end{proof}

We continue to consider a countably infinite ${\leq}(k+1)$-set homogeneous $k$-hypergraph $(H,E)$ whose automorphism group  is not 2-transitive and so preserves a total order $<$ on $H$. 
 For $i,j\in \{1,\ldots,k+1\}$ with $|i-j|\geq 2$, let $S_{ij}(x_1,\ldots,x_{k+1})$ be a formula (in the language with $<$ and $E$) expressing
$$(x_1<\ldots<x_{k+1}) \wedge  (\{x_1,\ldots,x_{k+1}\}\setminus \{x_i\} \mbox{~ is an edge})$$
$$ \wedge
(\{x_1,\ldots,x_{k+1}\}\setminus \{x_j\} \mbox{~ is a non-edge}).$$
Also for $i=1,\ldots,k$ let
$T_i(x_1,\ldots,x_{k+1})$ express
$$(x_1<\ldots<x_{k+1})$$
$$ \wedge  (\mbox{exactly one of~}\{x_1,\ldots,x_{k+1}\}\setminus \{x_i\} \mbox{~and~}
 \{x_1,\ldots,x_{k+1}\}\setminus \{x_{i+1}\} \mbox{~ is an edge}).$$
The conclusion of Lemma~\ref{5verts} is that the formula $S_{ij}$ is realised in $(H,E)$ for all $i,j\in \{1,\ldots,k+1\}$ with $|i-j|\geq 2$ and $T_i$ is realised for each $i=1,\ldots,k$.

If a $(k+1)$-set has $i$ edges, then these edges intersect in $k+1-i$ elements, and under an assumption of $(k+1)$-set-homogeneity of $(H,E)$ and the invariance of $<$, these $k+1-i$ elements are determined by $i$. 
For each $i=1,\ldots,k$ we say that $H$ satisfies $P^i_J$ where $J$ is a $(k+1-i)$-subset of $\{1,\ldots,k+1\}$, if $H$ has a $(k+1)$-set $\{u_1,\ldots,u_{k+1}\}$ (with $u_1<\ldots <u_{k+1}$) with exactly $i$ hypergraph edges, which intersect in the elements indexed by $J$. (In the case where $i=k$ and so $|J|=1$, we write $P^k_j$ rather than $P^k_{\{j\}}$.) We say $H$ satisfies $P^i_*$ if $H$ has no $(k+1)$-set with $i$ edges. Thus, for each $i$ either $P^i_J$ holds for exactly one $(k+1-i)$-subset $J$ of $\{1,\ldots,k+1\}$, or it holds for no $J$ and $P^i_*$ holds.  Furthermore, by Ramsey's theorem either there is a $(k+1)$-vertex set with $k+1$ edges, or there is a $(k+1)$-vertex set with no edges (possibly both).

\begin{lemma} \label{PkJ}
Assume the conditions of Lemma~\ref{5verts}. Then
\begin{enumerate}
\item[(i)]
$P^k_J$ ensures that all formulas $S_{ij}$ are realised where $i\not\in J$ and $j\in J$, and
\item[(ii)] for $i=1,\ldots,k$, a $(k+1)$-tuple  satisfying $P^k_J$ realises $T_i$ if and only if just one of $i,i+1$ lies in $J$.
\end{enumerate}
\end{lemma}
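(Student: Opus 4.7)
The plan is to unwrap the definition of $P^k_J$ and read off both parts directly. Since $|J| = k+1-k = 1$, write $J = \{j_0\}$. A $(k+1)$-vertex set has $\binom{k+1}{k} = k+1$ many $k$-subsets, so having exactly $k$ edges leaves a unique non-edge; the requirement that the $k$ edges intersect precisely in $\{u_{j_0}\}$ forces that unique non-edge to be $\{u_1,\ldots,u_{k+1}\}\setminus\{u_{j_0}\}$ (the only $k$-subset not containing $u_{j_0}$), and every other $k$-subset of $\{u_1,\ldots,u_{k+1}\}$ is therefore an edge.

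For (i), pick any $i \in \{1,\ldots,k+1\}$ with $i \neq j_0$. Then $\{u_1,\ldots,u_{k+1}\}\setminus\{u_i\}$ is an edge by the opening observation and $\{u_1,\ldots,u_{k+1}\}\setminus\{u_{j_0}\}$ is the unique non-edge, so the ordered tuple $u_1 < \cdots < u_{k+1}$ witnesses $S_{i,j_0}$. Since $j_0$ is the unique element of $J$ and $i$ ranges over all indices not in $J$, this produces every $S_{ij}$ with $i \notin J$ and $j \in J$.

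For (ii), $T_i$ asks that exactly one of $\{u_1,\ldots,u_{k+1}\}\setminus\{u_i\}$ and $\{u_1,\ldots,u_{k+1}\}\setminus\{u_{i+1}\}$ be an edge. By the opening observation, $\{u_1,\ldots,u_{k+1}\}\setminus\{u_\ell\}$ is a non-edge if and only if $\ell = j_0$, so exactly one of the two $k$-subsets is a non-edge if and only if exactly one of $i, i+1$ equals $j_0$, which is precisely the condition that exactly one of $i, i+1$ lies in $J$.

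There is no serious obstacle: both parts are essentially bookkeeping, once one translates the condition $P^k_J$ into the equivalent form ``the unique non-edge omits $u_{j_0}$''. The only mild care required is distinguishing the intersection of the edges (the $J$-indexed vertices, here one vertex) from the vertex omitted by the non-edge, which happens to coincide in this case because the non-edge is the $k$-subset avoiding $u_{j_0}$. I expect the real work will come in the next stage, where this lemma is combined with Lemma~\ref{5verts} and $\leq\!4$-set-homogeneity to narrow down which of the $P^i_J$'s can simultaneously hold in $(N,E)$, and thereby to recover the $C$-relation.
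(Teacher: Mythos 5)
Your approach is the same as the paper's---unwrap the definition of $P^{\bullet}_J$ to determine exactly which $k$-subsets of the $(k+1)$-set are edges, and then read off (i) and (ii)---but as written you have only proved a special case. You compute $|J|=k+1-k=1$, i.e.\ you take the superscript in $P^k_J$ literally to be the uniformity $k$, so that the $(k+1)$-set has exactly $k$ edges and a \emph{unique} non-edge. In the paper that superscript is generic notation: the lemma is invoked for $P^1_J$, $P^2_J$, $P^3_J$, \dots\ (see the caption of Table 1 and the proof of Lemma~\ref{twocases}, where for instance $P^2_{\{1,2\}}$ is asserted, on the strength of this lemma, to realise $S_{31}$, $S_{41}$, $S_{42}$), so the statement must cover $P^m_J$ with $|J|=k+1-m$ for every $m\in\{1,\ldots,k\}$. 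When $m<k$ there are $k+1-m>1$ non-edges and your ``unique non-edge omits $u_{j_0}$'' framing does not apply.

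The repair is immediate and is exactly the paper's one-line argument. Since the $m$ edges intersect precisely in $\{u_j: j\in J\}$, each edge contains every $u_j$ with $j\in J$ and hence omits some $u_i$ with $i\notin J$; as there are exactly $m$ indices outside $J$ and exactly $m$ edges, the edges are precisely the $k$-subsets omitting $u_i$ for $i\notin J$, and the non-edges are precisely those omitting $u_j$ for $j\in J$. With that dictionary in place, (i) and (ii) follow by the same bookkeeping you carried out (for (i), subject as always to the standing restriction $|i-j|\geq 2$ under which $S_{ij}$ is defined; for (ii), ``omitting $u_\ell$ is an edge iff $\ell\notin J$'' gives that exactly one of the two relevant subsets is an edge iff exactly one of $i,i+1$ lies in $J$). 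So the idea is right and matches the paper; you just need to run it at the correct level of generality.
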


\begin{proof} If $P^k_J$ holds, then any $u_1< \ldots <u_{k+1}$ carrying  $k$ edges has the edge omitting $u_i$ for each $i\in \{1,\ldots,k+1\}\setminus J$, and has the non-edge omitting $u_j$ for each $j\in J$. Both parts follow. 
\end{proof}

We now revert to our context where $(N,E)$ is a ${\leq}4$-set-homogeneous 3-hypergraph whose automorphism group $K$ preserves a total order  $\leq$ on $N$. For each $i=1,2,3$ either $P^i_J$ holds for some $J\subset \{1,2,3,4\}$ with $|J|=4-i$, or $P^i_*$ holds.

\begin{lemma} 
\label{twocases}
Under the above assumptions, one of the following holds.
\begin{enumerate}
\item[(i)] $P^2_{\{1,2\}}, P^1_{\{2,3,4\}}, P^3_{\{4\}}$
\item[(ii)]  $P^2_{\{3,4\}}, P^1_{\{1,2,3\}}, P^3_{\{1\}}$.
\end{enumerate}
\end{lemma}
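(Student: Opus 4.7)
The plan is to carry out a finite case analysis using the constraint from Lemma~\ref{5verts} that each formula $S_{ij}$ with $|i-j|\ge 2$ is realised in $(N,E)$, combined with the explicit description (implicit in the discussion preceding Lemma~\ref{PkJ}) of when a $(k+1)$-set of type $P^e_J$ realises $S_{ij}$: namely exactly when $i\in J^c$ and $j\in J$. For $k=3$ the six required formulas are $S_{1,3}, S_{1,4}, S_{2,4}, S_{3,1}, S_{4,1}, S_{4,2}$, and each must be covered by some realised $P^e_{J_e}$ with $e\in\{1,2,3\}$.

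The first step is to tabulate, for each possible value of $J_e$, the subset of the six required $S_{ij}$'s it covers. The key observations from this tabulation are: the only $J_2$ whose coverage contains $\{S_{1,3}, S_{1,4}, S_{2,4}\}$ is $J_2=\{3,4\}$; the only $J_2$ whose coverage contains $\{S_{3,1}, S_{4,1}, S_{4,2}\}$ is $J_2=\{1,2\}$; the only $J_3$ covering both $S_{3,1}$ and $S_{4,1}$ is $\{1\}$; the only $J_3$ covering both $S_{1,4}$ and $S_{2,4}$ is $\{4\}$; the only $J_1$ covering $S_{4,2}$ is $\{1,2,3\}$ (since a 3-subset has a unique omitted index, and we need $4$ to be omitted and $2\in J_1$); and symmetrically for $S_{1,3}$ and $S_{1,4}$ only $J_1=\{2,3,4\}$ works. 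Next I would case-split on $J_3$. If $J_3=\{1\}$, then $S_{3,1}$ and $S_{4,1}$ are covered; the requirement to realise all of $S_{1,3}, S_{1,4}, S_{2,4}$ then forces $J_2=\{3,4\}$; and the remaining $S_{4,2}$ then forces $J_1=\{1,2,3\}$. This gives case~(ii). The choice $J_3=\{4\}$ is symmetric (it is what one obtains by reversing the linear order $\leq$) and yields case~(i).

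In each remaining scenario -- $J_3\in\{\{2\},\{3\}\}$ or $P^3$ not realised -- one checks that no consistent pair $(J_1, J_2)$ covers the remaining required $S_{ij}$'s. For instance, with $J_3=\{2\}$ the only required $S_{ij}$ covered is $S_{4,2}$, and to cover $\{S_{1,3}, S_{1,4}, S_{2,4}\}$ one must take $J_2=\{3,4\}$, but then $\{S_{3,1}, S_{4,1}\}$ demands a single $J_1$ with $J_1^c=\{3\}$ and simultaneously $J_1^c=\{4\}$, a contradiction; the cases $J_3=\{3\}$ and $P^3$ not realised fail analogously, by an incompatibility between which index of a 3-set must be omitted. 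This rules out all cases except (i) and~(ii). Finally, the realisations of $T_1, T_2, T_3$ required by Lemma~\ref{5verts}(ii) hold in both surviving cases, as can be verified directly from Lemma~\ref{PkJ}(ii) (e.g.\ in case (ii), $T_1$ is witnessed by $P^3_{\{1\}}$, $T_2$ by $P^2_{\{3,4\}}$ and $T_3$ by $P^1_{\{1,2,3\}}$). The main obstacle is purely the bookkeeping of the enumeration; the argument is elementary but requires that the coverage table be set up carefully so that each failing configuration can be excluded by a single explicit conflict.
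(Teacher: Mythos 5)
Your proposal is correct and follows essentially the same route as the paper: both use the rule that $P^{e}_{J}$ realises $S_{ij}$ exactly when $i\notin J$ and $j\in J$, tabulate the coverage of the six required formulas $S_{13},S_{14},S_{24},S_{31},S_{41},S_{42}$, and eliminate all configurations except (i) and (ii) by a finite check (the paper organises this by first forcing $J_2\in\bigl\{\{1,2\},\{3,4\}\bigr\}$ via a count of how many of the six formulas each condition can realise, whereas you split first on $J_3$). One small caution: your intermediate claim that realising $S_{13},S_{14},S_{24}$ alone forces $J_2=\{3,4\}$ is not literally true as stated -- for instance $J_1=\{2,3,4\}$ covers $S_{13},S_{14}$ and $J_2=\{1,4\}$ covers $S_{24}$ -- and that configuration is only excluded because $S_{42}$ then remains uncovered, so the elimination at that step genuinely needs all six formulas rather than just those three.
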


\begin{proof} By Lemma~\ref{5verts}, all of the formulas $S_{31}, S_{41}, S_{42}, S_{13}, S_{14}, S_{24}$ must be realised, and it is routine by Lemma~\ref{PkJ} and Table 1  to verify that (i) and (ii)  are the only ways to realise all these $S_{ij}$.
 Conditions $P^2_{\{1,2\}}$ and $P^2_{\{3,4\}}$ each realise three such formulas (namely $S_{31}, S_{41}, S_{42}$, and $S_{13}, S_{14}, S_{24}$ respectively), whilst $P^2_{\{1,3\}}$ and $P^2_{\{2,4\}}$ each realise just one such formula (namely 
$S_{41}$ and $S_{14}$ respectively). Conditions $P^1_{\{2,3,4\}}$ and $P^1_{\{1,2,3\}}$  each realise two formulas $S_{ij}$, and $P^1_{\{1,2,4\}}$ and $P^1_{\{1,3,4\}}$  each realise one. Likewise $P^3_{\{1\}}$ and $P^3_{\{4\}}$ each realise two formulas, and $P^3_{\{2\}}$ and $P^3_{\{3\}}$ each realise one. Thus, to realise all six $S_{ij}$ formulas we must have $P^2_{\{1,2\}}$ or $P^2_{\{3,4\}}$. By examining the possibilities, we easily see that the first case gives (i) and the second case gives (ii).

%%%%%%%%%%%%%%%%%%%%%%%%
%%%%%%%%%%%%%%%%%%%%%%%%%

%\section{table}
\tikzset{ 
    table/.style={
        matrix of nodes,
        row sep=-\pgflinewidth,
        column sep=-\pgflinewidth,
        nodes={
            rectangle,
            draw=black,
            align=center
        },
        minimum height=1.5	em,
        text depth=0.5ex,
        text height=1.5ex,
        nodes in empty cells,
        every even column/.style={
            nodes={fill=gray!30}
        },
        column 1/.style={
            nodes={text width=2em,font=\bfseries}
        },
           % row 6/.style={
           %  nodes={
           %     fill=black,
           %      text=white,
           %     font=\bfseries
           %   }
      %   },
     %  row 12/.style={
     %        nodes={
     %            fill=black,
     %            text=white,
     %            font=\bfseries
     %       }
     %    },
          row 18/.style={
             nodes={
                 fill=black,
                 text=white,
                font=\bfseries
             }
         },
         row 18/.style={
             nodes={
                 fill=black,
                text=white,
               font=\bfseries
            }
        }
    }
}

%\centering

\begin{table}
\centering

\begin{tikzpicture}

\matrix [table, text width=1.6em]
{
 & $S_{13}$ & $S_{31}$ & $S_{14}$ & $S_{41}$  & $S_{24}$ & $S_{42}$ \\
$P^{1}_{123}$   & & & & 0 & & 0  \\
$P^{1}_{124}$   &  & 0 & & & &   \\
$P^{1}_{134}$   & &  & & & 0 & \\
$P^{1}_{234}$  & 0 & & 0 &  & &  \\
%
%   &  &  & & & &   \\
$P^{2}_{12}$  & & 0 &  & 0 & & 0 \\
$P^{2}_{13}$  &  &  & & 0 & &  \\
$P^{2}_{14}$  &  & 0 &  & & 0 &   \\
$P^{2}_{23}$  & 0 &  & & &  & 0   \\
$P^{2}_{24}$  &  &  & 0 & & &    \\
$P^{2}_{34}$  & 0 &  & 0 & & 0 &   \\
%
  %      &  &  & & & &  \\
$P^{3}_{1}$  &  & 0 & & 0 & &   \\
$P^{3}_{2}$  & &  & &  &  & 0  \\
$P^{3}_{3}$  & 0 &  & & & &  \\
$P^{3}_{4}$  &  &  & 0 & & 0 &  \\
   %&  &  & & & &  \\
};

\end{tikzpicture}
\caption{0 in the $(P^k_J,S_{ij})$-entry means that if $H$ satisfies $P^k_J$ then it realises $S_{ij}$.}
\end{table}
%%%%%%%%%%%%%%%%%%
%%%%%%%%%%%%%%%%%%%%%%%%%%%%%%%%%%%%%%%%%%%%%
\end{proof}

We shall assume that Case (ii) holds; this is justified  since (i) is obtained from (ii) by reversing the order (in fact, if (i) holds, we obtain the complement of $M_3$, which as a hypergraph is isomorphic to $M_3$).

\begin{lemma}\label{i edges}
For each $i\in \mathbb{N}$ with $0\leq i \leq 4$ there is a 4-vertex substructure of $(N,E)$ with exactly $i$ edges.
\end{lemma}
\begin{proof} For $i=1,2,3$ this follows from our Case (ii) assumption. For the case $i=0$,
fix a non-edge $uvw$ of $ $ with $u<v<w$.
There is $a<u$ with $auv$ a non-edge, and by inspecting the possibilities in our Case (ii) assumption, $\{a,u,v,w\}$ has 0 edges. 
For the case $i=4$ consider an edge $pqr$ with $p<q<r$, and  some $d<p$ with $dpq$ an edge -- the set $\{d,p,q,r\}$ must have four edges.
\end{proof}

\bigskip
Next, we define a ternary relation $C$ on $N$ as follows. For $x,y,z\in N$, we put $C(x;y,z)$ if and only if one of the following holds.
\begin{enumerate}
\item[(a)] $y=z \wedge x\neq y$;
\item[(b)] $x,y,z$ are distinct, and $x<\Min\{y,z\}\wedge Exyz$;
\item[(c)] $x,y,z$ are distinct, and $\Max\{y,z\}<x \wedge \neg Exyz$.
\end{enumerate}

\begin{lemma} \label{Crel}
The structure $(N,C)$ is a 2-regular  proper $C$-set.
\end{lemma}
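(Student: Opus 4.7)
The plan is to verify axioms (C1)--(C6) (which together give a proper $C$-relation) and then $2$-regularity, working throughout under the Case (ii) assumption $P^1_{\{1,2,3\}}$, $P^2_{\{3,4\}}$, $P^3_{\{1\}}$ established in Lemma~\ref{twocases}. Axioms (C1) and (C4) are immediate: $\Min\{y,z\}$, $\Max\{y,z\}$ and $E$ are symmetric in $y,z$, while clause (a) of the definition gives (C4) directly. Axiom (C2) follows from a short inspection: if $C(x;y,z)$ is witnessed by clause (b) (so $x<\Min\{y,z\}$ and $E(x,y,z)$), then $C(y;x,z)$ is incompatible with all three clauses, as the strict inequality $y>x$ rules out (a) and (b) and the symmetry $E(y,x,z)=E(x,y,z)$ rules out (c); the case in which $C(x;y,z)$ is witnessed by clause (c) is dual. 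For the properness axioms (C5) and (C6), I would use Lemma~\ref{i edges} to produce an ordered $4$-set with $3$ edges (yielding an edge $a<b<c$ and hence $C(a;b,c)$ via clause (b)) and an ordered $4$-set with $0$ edges (yielding a non-edge $u<v<w$ and hence $C(w;u,v)$ via clause (c)); $2$-set-homogeneity of $K$, combined with $K$-invariance of $<$, then transports these configurations to give witnesses for arbitrary prescribed pairs $y<z$ or arbitrary $x\neq y$.

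The main obstacle is (C3): $C(x;y,z)\to C(x;w,z)\vee C(w;y,z)$. I would split first according to which clause witnesses the hypothesis, then according to the position of $w$ among $\{x,y,z\}$ in the order $<$ (handling $w\in\{x,y,z\}$ as easy degenerate cases using clause (a)). For the main subcases, the crucial observation is that Case (ii) of Lemma~\ref{twocases} pins down, for every ordered $4$-set $u_1<u_2<u_3<u_4$ and every edge-count $0,1,2,3,4$ that it might carry, exactly which $3$-subsets are edges. The edge-or-non-edge condition encoded in the hypothesis $C(x;y,z)$ eliminates several of the possible edge-counts on $\{x,y,z,w\}$, and in each surviving case one reads off directly from Case (ii) that either $E(x,w,z)$ or $E(w,y,z)$ holds (when one needs clause (b) of the definition for the conclusion), or that the corresponding negation holds (when one needs clause (c)). This is an entirely mechanical but tedious bookkeeping, the heart of the proof.

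Finally, $2$-regularity follows from the standard characterisation that a proper dense $C$-set is $2$-regular precisely when, for every triple of distinct points $a,b,c$, exactly one of $C(a;b,c)$, $C(b;a,c)$, $C(c;a,b)$ holds. Given $a<b<c$ in $N$, the definition of $C$ makes $C(b;a,c)$ impossible (since $b$ is neither $\Min$ nor $\Max$ of $\{a,c\}$), reduces $C(a;b,c)$ to $E(a,b,c)$, and reduces $C(c;a,b)$ to $\neg E(a,b,c)$, so exactly one of the three configurations holds. Combined with the already verified (C1)--(C6), this shows that $(N,C)$ is a proper $2$-regular $C$-set.
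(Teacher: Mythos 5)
Your proposal is correct and follows essentially the same route as the paper: (C1), (C2), (C4) by direct inspection of the three clauses, (C5)--(C6) by transporting edge/non-edge configurations via $2$-homogeneity and the invariance of $<$, (C3) by a case split on the edge-count of $\{x,y,z,w\}$ resolved through the conditions $P^1_{\{1,2,3\}}$, $P^2_{\{3,4\}}$, $P^3_{\{1\}}$ of Lemma~\ref{twocases}(ii), and $2$-regularity via the trichotomy for $a<b<c$. The only difference is that you leave the (C3) bookkeeping as a sketch, whereas the paper writes out the four edge-count subcases explicitly; the steps you describe are exactly the ones it carries out.
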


\begin{proof} Axioms (C1) and (C4) of Section 2.1 follow immediately from the above definition. (C2) is immediate if $y=z\wedge x\neq y$, so suppose that $C(x;y,z)$ holds and $x,y,z$ are distinct. Either $x<\Min\{y,z\}$ and $Exyz$, or $\Max\{y,z\}<x \wedge \neg Exyz$.
In the first case, we do not have $y<\Min\{x,z\}$ so $C(y;x,z)$ cannot hold through (b), and as $Exyz$ holds,  $C(y;x,z)$ cannot hold through (c). The argument is similar in the second case. Thus (C2) holds. 

To prove (C3), suppose $C(x;y,z)$ holds, and $w\in N$. In case (a), where $y=z\neq x$, either $w=y=z$ and $C(x;w,z)$ holds, or $w\neq y$ and $C(w;y,z)$ holds, as required. 

Suppose case (b) holds. If $w=x$ then $C(w;y,z)$ holds, if $w=y$ then $C(x;w,z)$ holds, and if $w=z$ then $C(x;w,z)$ via (a). 
Thus, we may suppose $x,y,z,w$ are distinct. Now as $Exyz$ holds, $\{x,y,z,w\}$ has 1,2,3 or 4 edges. If there is just one edge,
then $P^1_{\{1,2,3\}}$ (from Lemma~\ref{twocases}(ii)) yields that $\Max\{x,y,z\}<w$ and $yzw$ is a non-edge, so $C(w;y,z)$ by (c). If there are 2 edges then as $P^2_{\{3,4\}}$ holds  and as $C(x;y,z)$ arises from (b), we must have that $w<\Min\{y,z\}$ and $Ewyz$, so 
$C(w;y,z)$ holds by (b). If there are 3 edges then by $P^3_{\{1\}}$ as $Exyz$ we must have $x<w$. Since all the triples of  $\{x,y,z,w\}$ with element $x$ are edges, we must have $Exwz$, so $C(x;w,z)$ holds by (b). Finally, if there are 4 edges, then if $w<\Min\{y,z\}$ then $C(w;y,z)$ holds, and otherwise $C(x;w,z)$ holds, in each case by (b).

The proof when $C(x;y,z)$ holds through case (c) is similar. Thus, we have established (C3). 

For axiom (C5), if $y=z$ then any $x\neq y$ satisfies $C(x;y,z)$. And if $y\neq z$ then using 2-homogeneity of $K$ and the existence of edges we find $x<\Min\{y,z\}$ with $Exyz$, and then $C(x;y,z)$ holds. 

Finally, for (C6), suppose $x\neq y$. Suppose first $x<y$. Again using 2-homogeneity and existence of edges, there is $z>y$ with $Exyz$, and then $y\neq z \wedge C(x;y,z)$ holds. Similarly, if $y<x$, there is $z<y$ with $\neg Exyz$, and again 
$y\neq z \wedge C(x;y,z)$, the latter by (c).

For 2-regularity, suppose for a contradiction that there are distinct $x,y,z$ such that 
$\neg C(x;y,z)\wedge \neg C(y;x,z)\wedge \neg C(z;x,y)$. We may suppose $x<y<z$. Then $xyz$ is a non-edge by (b) as $\neg C(x;y,z)$, but $xyz$ is an edge by  (c)  as $\neg C(z;x,y)$, a contradiction.
\end{proof}

\begin{lemma} \label{compat}
The order $\leq$ on $N$ is compatible with $C$, and $(N,C,\leq)$  is a strongly dense $(C,\leq)$-set.
\end{lemma}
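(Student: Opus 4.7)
The plan is to verify the three conditions making up the statement: compatibility of $\le$ with $C$, that $(N,\le)$ is a dense linear order without endpoints, and axiom (C8). Properness of the $C$-set has already been established in Lemma~\ref{Crel}. The first two are essentially immediate. For compatibility, since $(N,C)$ is 2-regular, I need only show $C(x;y,z)\vee C(z;x,y)$ whenever $x<y<z$; but clauses (b) and (c) of the definition of $C$ give exactly this dichotomy from $E(x,y,z)$ versus $\neg E(x,y,z)$. For denseness without endpoints, since $K$ preserves $\le$ and is 2-set-homogeneous, it is 2-transitive on ordered pairs, so $(N,\le)\cong(\mathbb Q,<)$ by a Cantor back-and-forth.

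The substantial content is axiom (C8). The strategy, given $C(x;y,z)$, is to find the required $w_1<\min\{y,z\}$ and $w_2>\max\{y,z\}$ by extending $\{x,y,z\}$ to a carefully chosen 4-set. The tools are Lemma~\ref{i edges} (4-sets with each of $0,1,2,3,4$ edges exist) and the fact that 3-set-homogeneity plus order-invariance makes $K$ transitive on ordered edges $u<v<w$ and separately on ordered non-edges. Assuming without loss of generality $y<z$, the hypothesis splits into Case~(b) ($x<y<z$, $E(x,y,z)$) and Case~(c) ($y<z<x$, $\neg E(x,y,z)$). In Case (b), for $w_1\in(x,y)$ I embed $(x,y,z)$ as the ``outer'' edge $(a_1,a_3,a_4)$ of a 4-edged 4-set $\{a_1<a_2<a_3<a_4\}$ via 3-set-homogeneity and let $w_1$ be the image of $a_2$; for $w_2>z$ I embed $(x,y,z)$ as the bottom edge $(a_1,a_2,a_3)$ of a 3-edged 4-set, so that $P^3_{\{1\}}$ (from Lemma~\ref{twocases}) forces the non-edge to land at $\{y,z,w_2\}$, giving $C(w_2;y,z)$ via (c) and $C(x;y,w_2)$ via (b). In Case (c), for $w_1<y$ I embed $(y,z,x)$ as the top non-edge $(a_2,a_3,a_4)$ of a 1-edged 4-set, so that $P^1_{\{1,2,3\}}$ forces the sole edge to be $\{w_1,y,z\}$; for $w_2$ I embed $(y,z,x)$ as the ``outer'' non-edge $(b_1,b_2,b_4)$ of a 0-edged 4-set, placing $w_2$ as the image of $b_3$, so that $z<w_2<x$ and all triples of $\{y,z,w_2,x\}$ are non-edges.

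The main obstacle is the $w_2$ subcase of Case (c). The nonobvious point is that $w_2$ cannot be taken beyond $x$: for then $\min\{y,w_2\}=y<x$ and $\max\{y,w_2\}=w_2>x$, so neither clause (b) nor (c) witnesses $C(x;y,w_2)$, and we are forced into the interval $(z,x)$. Once this is noticed, the forced pattern of $\{y,z,w_2,x\}$ having all triples non-edges matches exactly what Lemma~\ref{i edges} supplies. The broader delicacy throughout (C8) is that we only have 3-set-homogeneity, so the $w_i$ cannot be freely positioned in the order --- their positions are pinned down by the ordered edge-pattern of the ambient 4-set, dictated by Lemma~\ref{twocases}(ii), and one has to verify in each of the four subcases that the required pattern is one of the patterns actually realized in $(N,E)$.
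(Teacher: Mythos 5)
Your proof is correct and follows essentially the same route as the paper's: compatibility reduces immediately to the edge/non-edge dichotomy in clauses (b) and (c) of the definition of $C$, and (C8) is verified by transplanting the ordered $4$-point configurations supplied by Lemma~\ref{i edges} onto $\{x,y,z\}$ via $3$-set-homogeneity together with $K$-invariance of $<$ (your Case (c), which the paper dismisses as ``handled similarly'', is worked out correctly, including the observation that $w_2$ must land in the interval $(z,x)$). The one omission is the degenerate case $y=z$ of (C8), where $C(x;y,z)$ holds via clause (a): your ``without loss of generality $y<z$'' does not cover it, and the paper treats it separately by choosing $w_1$ between $x$ and $y$ with $xw_1y$ an edge and $w_2$ beyond $y$ with $xyw_2$ an edge.
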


\begin{proof}
In this 2-regular context, for compatibility we must show that if $x<y<z$ then $C(x;y,z)\vee C(z;x,y)$. This is immediate -- if $xyz$ is an edge then $C(x;y,z)$ holds, and if $xyz$ is a non-edge then $C(z;x,y)$ holds. 

It remains to prove strong density, {\em i.e.} that (C8) holds. So suppose that $C(x;y,z)$ holds. Again, we may suppose $x<\Min\{y,z\}$, the case when $\Max\{y,z\}<x$ being handled similarly.

We suppose first $y=z$.  Then there is $w_1$ with $x<w_1<y$ and $xw_1y$ an edge, and for such $w_1$ we have $C(w_1;y,y)\wedge C(x;y,w_1)$, and likewise there is $w_2>y$ with $xyw_2$ an edge, again yielding $C(w_2;y,y)\wedge C(x;y,w_2)$.

Suppose now $y\neq z$. We may suppose $y<z$.  By Lemma~\ref{i edges} there are $p<q<r<s$ in $N$ such that $\{p,q,r,s\}$ has 4 edges. Since $xyz$ is an edge, by 3-set-homogeneity of $(N,E)$ we may choose $g\in K$ with $\{p,r,s\}^g=\{x,y,z\}$ and put $w_1=q^g$. By $K$-invariance of $<$ we have $x<w_1<y$ and $xyw_1$ and $w_1yz$ are edges, so $C(w_1;y,z)\wedge C(x;y,w_1)$ holds.
  Likewise there is $w_2>z$ such that $\{x,y,z,w_2\}$ has three edges, yielding again $C(w_2;y,z)\wedge C(x;y,w_2)$. 
\end{proof}

{\em Proof of Theorem~\ref{3hyper}(i).}  The existence assertion  follows from Proposition~\ref{exist}. 
For the uniqueness characterisation, suppose that $(N,E)$ is a ${\leq}4$-set-homogeneous countably infinite 3-hypergraph whose automorphism group $K$ is not 2-transitive. By Lemma~\ref{linear} and Lemmas~\ref{Crel} and \ref{compat}, there are a $K$-invariant total order $\leq$ on $N$ and compatible $K$-invariant 2-regular $C$-relation $C$ on $N$ so that $(N,C,\leq)$ is strongly dense. It follows from Theorem~\ref{2-reg} that $(N,C,\leq) \cong (M,C,\leq)$. By our assumption that Case (ii) of Lemma~\ref{twocases} holds,  it follows that $E$ is defined from $C$ and $\leq$ in $N$ in the same way as in $M$, and hence that $(N,E)\cong M_3$. If instead we had assumed that any  4-vertex set in $N$ with two edges has the edges intersecting in the first two elements, then $(N,E)$ would be isomorphic to the hypergraph complement $M_3^c$ of $M_3$. However, since $M_3\cong M_3^c$  (as $(M,C,\leq)\cong (M,C,\geq)$), it again follows that $(N,E)\cong M_3$. \hfill  $\Box$

\subsection{A further set-homogeneous 3-hypergraph.}
We consider a further set-homogeneous 3-hypergraph associated with the countably homogeneous local order $T=(Z,\to)$ described in Section 2.1. Let $H$ be the group described there of automorphisms and anti-automorphisms of $T$, which has $\Aut(T)$ as a subgroup of index 2. Our arguments below heavily appeal to the description of $(Z,\to)$ as consisting of points on the unit circle (see Section 2.1). We frequently use that in this representation of $T$, $\Aut(T)$ preserves the natural circular ordering on $Z$, and $H$ preserves the induced separation relation $S$; this is easily verified. There is a natural notion of a subset $U$ of $Z$ being {\em convex} with respect to the circular order; this means that for any distinct $x,y\in U$ and distinct $z,w\in Z\setminus U$, $\neg S(x,y;z,w)$ holds. 

We define a ternary relation $R$ on $Z$, putting $R(x;y,z)$ if and only if 
$$\big( y \to x \wedge y \to z \wedge x \to z\big) \vee \big( z \to x \wedge z \to y \wedge x \to y\big).$$
Clearly $H \leq \Aut(Z,R)$. When discussing $(Z,R)$ (and $(Z,E)$ below) we often refer to the underlying tournament from which they are defined -- even though these structures do not determine the tournament relation $\to$. 

The lemma below is useful to us and may have independent interest: the group $H$ is significant, since among primitive oligomorphic groups which are not $k$-homogeneous for all $k$,  $H$ has the slowest known growth for the function $f(k)$ mentioned in Section 2.1. 

\begin{lemma} \label{localorder}
\begin{enumerate}
\item[(i)] $H=\Aut(Z,R)$.
\item[(ii)] The structure $(Z,R)$ is homogeneous. 
\end{enumerate}
\end{lemma}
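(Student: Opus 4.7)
The plan is to reduce both parts to a single reconstruction lemma: for every $U \subseteq Z$ with $|U| \geq 2$, exactly two tournaments on $U$ induce the given $R$-structure, namely $\to|_U$ and its reverse.  To prove this, I would fix any $a, b \in U$ and an orientation of $\{a, b\}$; for each $c \in U \setminus \{a, b\}$ the four possible $R$-types of the triangle $\{a, b, c\}$ (cyclic, or transitive with middle $a$, $b$, or $c$) combined with the chosen orientation of $\{a, b\}$ picks out exactly one of the two tournaments on $\{a, b, c\}$ compatible with that $R$-type, as a short case check shows.  Hence every edge incident to $a$ or $b$ is pinned down; edges $\{c, d\}$ with $c, d \in U \setminus \{a, b\}$ are then forced by running the same analysis on the triangle $\{a, c, d\}$ using the already determined direction of $\{a, c\}$.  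Since both $\to|_U$ and its reverse clearly realise the given $R$-type, these are the only two tournaments compatible with it.

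With the lemma in hand, part (i) is essentially formal.  The inclusion $H \leq \Aut(Z, R)$ is immediate: elements of $\Aut(T)$ preserve $R$ by the definition of $R$, and $\gamma$ preserves $R$ because the defining condition ``$\{x, y, z\}$ is transitive with middle $x$'' is invariant under reversing all arrows.  For the reverse inclusion, take $g \in \Aut(Z, R)$ and define a tournament $\to_g$ on $Z$ by $a \to_g b \Leftrightarrow g(a) \to g(b)$; since $g$ preserves $R$, the $R$-structure of $\to_g$ coincides with that of $\to$, and the reconstruction lemma applied to $U = Z$ forces either $\to_g \, = \, \to$ (whence $g \in \Aut(T) \leq H$) or $\to_g$ equal to the reverse of $\to$ (whence $g$ is a tournament anti-automorphism and $\gamma^{-1} g \in \Aut(T)$, so $g \in H$).

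For part (ii), I would take a partial isomorphism $f : U \to V$ of $(Z, R)$ between finite sets, form the pulled-back tournament $\to_f$ on $U$ via $u \to_f u' \Leftrightarrow f(u) \to f(u')$, and apply the lemma to $U$: $f$ is either a tournament isomorphism or a tournament anti-isomorphism from $U$ to $V$.  In the first case, homogeneity of the dense local order $T$ (cf.\ Section~2.1) extends $f$ to some $\sigma \in \Aut(T) \leq \Aut(Z, R)$.  In the second case, the map $u \mapsto \gamma^{-1}(f(u))$ is a tournament isomorphism from $U$ to $\gamma^{-1}(V)$ (composition of two arrow-reversing maps), which extends by homogeneity of $T$ to some $h \in \Aut(T)$; then $\gamma \circ h \in H$ restricts to $f$ on $U$.

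The main obstacle is the reconstruction lemma; once it is verified, everything else falls out cleanly from the homogeneity of $T$ and the definition of $H$.  The lemma itself amounts to a finite case analysis on one triangle, together with the use of a ``star'' of triangles based at $a$ to propagate the determination across $U$; the worry that the propagation could force inconsistencies is dispelled automatically by the fact that $\to|_U$ and its reverse are both compatible witnesses for the two choices of starting orientation.
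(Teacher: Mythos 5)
Your argument is correct, and for part (ii) it takes a genuinely different route from the paper. The reconstruction lemma is sound: on a triangle, each of the four $R$-types together with the orientation of one edge determines the other two edges uniquely (a check of the four cases confirms this), and your resolution of the consistency worry in the propagation step is exactly right --- the forcing procedure yields at most one tournament per choice of orientation of $\{a,b\}$, and since $\to|_U$ and its reverse are both compatible and differ on $\{a,b\}$, they must be the two outputs. For part (i) your argument is essentially the paper's in different clothing: the paper also recovers $\to$ from $R$ plus one oriented edge, via explicit definitions such as $b\to u \Leftrightarrow R(a;b,u)\vee R(u;b,a)$ relative to a triple fixed after composing with a suitable element of $H$; your ``exactly two compatible tournaments'' statement packages the same case analysis more uniformly. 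The real divergence is in part (ii): the paper proves homogeneity of $(Z,R)$ from scratch by a one-point-extension back-and-forth, analysing where a new vertex sits relative to a finite set via ``linear'' subsets and the circular representation, which occupies substantial case analysis. You instead deduce from the reconstruction lemma that every finite partial $R$-isomorphism is a tournament isomorphism or anti-isomorphism, and then extend it using the known homogeneity of the dense local order $T$ (composing with $\gamma$ in the anti-isomorphism case). This is shorter, avoids the geometric casework entirely, and makes transparent \emph{why} $(Z,R)$ is homogeneous --- its finite partial isomorphisms are exactly the restrictions of elements of $H$-type maps; the only external input is homogeneity of $T$, which the paper already asserts in Section 2.1, so nothing circular is involved. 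The paper's direct back-and-forth, on the other hand, is self-contained and incidentally develops the ``linear set'' machinery reused in the proof of Proposition 3.3.4 ($N_3$), which is presumably why the authors chose it.
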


\begin{proof}
(i) Since $H\leq \Aut(Z,R)$, it suffices to show $\Aut(Z,R)\leq H$.  Suppose that $g\in \Aut(Z,R)$. Using that $H$ is transitive on $Z$ and on $\{(x,y,z):R(x;y,z) \mbox{~holds}\}$, there is $h\in H$ and $a,b,c\in Z$ with $R(b;a,c)$ such that $gh$ fixes $a,b,c$. We may suppose (adjusting $h$ if necessary) that $b\to a$. Now for any $u\in Z\setminus\{a,b,c\}$ we have
$\big(b \to u\big)\Leftrightarrow \big( R(a;b,u) \vee R(u;b,a)\big)$, and for distinct such $u,u'$ we have $u \to u' \Leftrightarrow R(u;b,u')$. Also if $b \to u$ and $u'\to b$, then $u\to u'\Leftrightarrow \neg R(b;u,u')$. Orientations of pairs $u,u'$ with $u \to b$ and $u' \to b$ and pairs involving $a,c$ are likewise easily recoverable. Thus, as $gh\in \Aut(Z,R)$, also $gh \in \Aut(Z,\to)<H$, so $g\in H$.

(ii) Let $\alpha:(U,R)\to (V,R)$ be an isomorphism  between finite substructures of $(Z,R)$, and let $a \in Z\setminus U$. We must extend $\alpha$ to $U \cup \{a\}$ (for then a back-and-forth argument suffices). Using the action of $H$, $(Z,R)$ is $\leq 3$-homogeneous, so we may assume $|U|\geq 4$.

We shall say finite $X\subset Z$ is {\em linear} if there is an enumeration  $X=\{x_1,\ldots,x_t\}$ such that $R(x_j;x_i,x_k)$ holds whenever $i<j<k$ or $k<j<i$. We call $(x_1,\ldots,x_t)$ a {\em linear enumeration} of $X$, and note that a linear set has two linear enumerations. (In the presentation of $T$ as consisting of points on the unit circle, a linear set is one contained in a segment making an angle less than $\pi$ at the centre.)

First suppose that $U$ is linear, with linear enumeration $(u_1,\ldots,u_t)$. Let $v_i=\alpha(u_i)$ for each $i$. Then since the linearity is determined by $R$ which is $\alpha$-invariant, $(v_1,\ldots,v_t)$ is a linear enumeration of $V$. By considering the representation in the unit circle, there are two possibilities:

(a) There is $p$ such that $(u_1,\ldots,u_p,a,u_{p+1},\ldots,u_t)$ is a linear enumeration of $U\cup \{a\}$ (we allow here $(a,u_1,\ldots,u_t)$ and $(u_1,\ldots,u_t,a)$).

(b) There is $p$ with $1\leq p<t$ such that $R(u_i;a,u_j)$ whenever $1\leq i<j\leq p$, $R(u_j;u_i,a)$ whenever $ p+1\leq i<j\leq t$, and $R$ does not hold among $a,u_i,u_j$ for $1\leq i\leq p<j\leq t$.

In case (a) it is clear that there is $b\in Z\setminus V$ such that $(v_1,\ldots,v_p,b,v_{p+1},\ldots,v_t)$ is linear, and we extend $\alpha$ by putting $\alpha(a)=b$. If (b) holds, choose $b\in Z\setminus V$ such that $R$ does not hold on $\{b,v_p,v_{p+1}\}$. Again, we may put $\alpha(a)=b$. 

Thus, we may suppose that $U$ is not linear.
 Let $C$ be a  subset of $U$ of maximal size such that $C=\{c_1,\ldots,c_r\}$ and $(a,c_1,\ldots,c_r)$ is a linear enumeration of $C\cup \{a\}$. Put $D=U\setminus C$. Then $D\cup \{a\}$ has a linear enumeration 
$(a,d_1,\ldots,d_s)$. By maximality of $C$ we have $s\leq r$, and as $|U|\geq 4$ we have $r\geq 2$. 
Note that if $1\leq i<j\leq r$ and $1\leq k\leq s$ then $R(a;c_j,d_k)\to R(a;c_i,d_k)$, and if $1\leq i<j\leq s$ and $1 \leq k\leq r$ then $R(a;c_k,d_j)\to R(a;c_k,d_i)$. Also, no ordering of $\{a,c_r,d_s\}$ satisfies $R$, since otherwise $U\cup \{a\}$ is linear with linear enumeration $(d_s,\ldots,d_1,a,c_1,\ldots,c_r)$, contradicting non-linearity of $U$.  Non-linearity of $U$ likewise ensures $R(a;c_1,d_1)$. See Figure 5.

%%%%%%%%%%%%
\begin{figure}[H]
\centering
\begin{tikzpicture}
\draw (0,0) circle [radius=1];
\draw (-.4, .86)--(-0.6, .89);
\draw (-0.5, 1)node[left]{$c_{2}$};
\draw (-.7, .62)--(-0.9, .66);
\draw (-.75, .75)node[left]{$c_{1}$};
\draw (-1, .355)--(-0.85, .345);
\draw (-1, .345)node[left]{$a$};
\draw (-1.1, -.22)--(-.9, -.19);
\draw (-1, -.19)node[left]{$d_{1}$};
\draw (-.95, -.55)--(-.8, -.48);
\draw (-.8, -.62)node[left]{$d_{2}$};
\draw (.95, -.55)--(.8, -.48);
\draw (1.5, -.55)node[left]{$d_{s}$};
\draw (.8, .50)--(.95, .55);
\draw (.9, .59)node[right]{$c_{r}$};

\draw (4,0) circle [radius=1];
\draw (3.6, .86)--(3.4, .89);
\draw (3.5, 1)node[left]{$c'_{2}$};
\draw (3.3, .62)--(3.1, .66);
\draw (3.25, .75)node[left]{$c'_{1}$};
\draw (2.9, -.22)--(3.1, -.19);
\draw (3, -.19)node[left]{$d'_{1}$};
\draw (3.05, -.55)--(3.2, -.48);
\draw (3.2, -.62)node[left]{$d'_{2}$};
\draw (4.95, -.55)--(4.8, -.48);
\draw (5.5, -.55)node[left]{$d'_{s}$};
\draw (4.8, .50)--(4.95, .55);
\draw (4.9, .59)node[right]{$c'_{r}$};

\end{tikzpicture}
\caption{}
\end{figure}

%%%%%%%%%%%%%%%

 Let $c_i'=\alpha(c_i)$ and $d_j'=\alpha(d_j)$ for each $1\leq i \leq r$ and $1\leq j \leq s$. 
The natural linear order induced from the listing $(c_1,\ldots,c_r,d_s,\ldots,d_1)$ induces a separation relation (i.e. a circular order up to reversal -- see Section 2.1) which agrees the separation relation induced from the one on $Z$ determined by $\to$. This separation relation on $U$ is determined by $R$ which is $\alpha$-invariant,  and hence $(c_1',\ldots,c_r',d_s'\ldots,d_1')$ has the same separation relation.  For convenience we suppose that both $(c_1,\ldots,c_r,d_s,\ldots,d_1)$ and $(c_1',\ldots,c_r',d_s',\ldots,d_1')$ are cyclically ordered clockwise around the unit circle as in Figure 5, and therefore may refer to the underlying tournament $(Z,\to)$ as a convenient way of indicating angles at the centre.

Now for any $1\leq i \leq r$ and $1\leq j\leq s$, suppose $(i,j) \neq (r,s)$, say $i\neq r$. Then
$R(a;c_i,d_j)\leftrightarrow \neg R(c_r;c_i,d_j)$. Since $R(c_r;c_i,d_j)\leftrightarrow R(c_r';c_i',d_j')$, it suffices to show there is $b\in Z\setminus V$ such that
$R(b;c_1',d_1')$ and both $(b,c_1',\ldots, c_r')$ and $(b,d_1',\ldots,d_s')$ are linear enumerations. Now since $\neg R(c_r;c_1,d_1)$ (as $U$ is not linear), we have $\neg R(c_r';c_1',d_1')$, so the clockwise angle from $d_1'$ to $c_1'$ at the centre is less than $\pi$, that is, $d_1'\to c_1'$. Also, by considering angles at the centre, we have ${c'_r}^{+}\cup {d'_s}^{-}\neq T$, and as $c'_1\to c'_r$ we have $c'_1\not\in {c'_r}^{+}$. Likewise $d'_1\not\in {d'_s}^{-}$ (we allow $1=s$). It follows that we may choose $b\in Z\setminus V$ with $R(b;d_1',c_1')$ and $b\in {d'_s}^+ \cap {c'_r}^{-}$, and for such $b$ the extension of $\alpha$ with $\alpha(a)=b$ has the required properties.

\end{proof}

Let $L_R$ be the language with just the relation symbol $R$. Define a 3-hypergraph $N_3=(Z,E)$ whose edges are the 3-sets of $(Z,R)$ which satisfy $R$ under some ordering, that is, lie in a segment making an angle less than $\pi$ at the centre. 

\begin{proposition}\label{N3}
The hypergraph  $N_3$ is set-homogeneous, but not 3-homogeneous. In particular its automorphism group is not 2-primitive, and does not induce the full symmetric group on triples satisfying $E$.
\end{proposition}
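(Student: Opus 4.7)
The plan is to first identify $\Aut(N_3)$ with $H=\Aut(Z,R)$ of Lemma~\ref{localorder}(i), and then read off each conclusion of the proposition from this identification. Since for distinct $x,y,z$ we have
\[
E(x,y,z)\iff R(x;y,z)\vee R(y;x,z)\vee R(z;x,y),
\]
the relation $E$ is $\emptyset$-definable from $R$, giving $H\le\Aut(N_3)$. The core step is the reverse inclusion, for which I claim that the \emph{middle} of any edge---the unique $m\in\{x,y,z\}$ with $R(m;\cdot,\cdot)$---is $\emptyset$-definable from $E$, so that every $g\in\Aut(N_3)$ preserves middles and therefore preserves $R$.

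To prove this middle-recovery, I would first observe that $N_3$ is a two-graph (Remark~\ref{two-graph}), so every 4-subset carries an even number of edges; an arc-length computation on the unit circle (summing the four constraints $a_i+a_{i+1}<\pi$ forces $4\pi$, a contradiction) rules out 4-subsets with $0$ edges, so every 4-subset has exactly $2$ or $4$ edges, and in the 2-edge case the two edges necessarily share a pair. A short case analysis on the cyclic position of a fourth vertex $w$ relative to $\{x,y,z\}$ then shows that whenever $\{x,y,z,w\}$ has $2$ edges with $\{x,y,z\}$ one of them, the shared pair contains the middle $m$ of $\{x,y,z\}$; moreover, by density of the local order both $\{x,m\}$ and $\{m,z\}$ arise as such shared pairs as $w$ varies. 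Hence $m$ is the unique element of $\{x,y,z\}$ lying in every shared pair of every 2-edge 4-subset containing $\{x,y,z\}$ as one edge, an $E$-definable condition, and $\Aut(N_3)=H$ follows.

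Set-homogeneity is now a brief induction on $|U|$. Given finite $U,V\subset Z$ with $(U,E)\cong(V,E)$, it suffices to produce an $R$-isomorphism $(U,R)\to(V,R)$, since homogeneity of $(Z,R)$ (Lemma~\ref{localorder}(ii)) then yields $g\in H\le\Aut(N_3)$ carrying $U$ onto $V$. The case $|U|\le 3$ is immediate (non-edges carry no $R$-relation; an edge has a canonical middle). For $|U|\ge 4$, the $E$-isomorphism type of $U$ determines the iso type of each 4-subhypergraph, and within each 4-vertex $E$-type the middle pattern is rigid (in the 2-edge case the two middles are exactly the elements of the shared pair; in the 4-edge case the four points form a linear chain with two canonical middle positions), so the middle patterns on 4-subsets glue consistently into a fixed $R$-iso type on $U$.

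The identification $\Aut(N_3)=H$ now delivers the remaining assertions. For non-3-homogeneity and the failure to induce $S_3$ on an edge $\{x,y,z\}$ with middle $y$: the setwise stabiliser in $H$ fixes $y$, so acts on $\{x,y,z\}$ as a subgroup of $\{\mathrm{id},(xz)\}$, properly contained in $S_3$. For failure of 2-primitivity: the partition $\{a^+,a^-\}$ of $Z\setminus\{a\}$ is a non-trivial block system of $H_a$, because $\Aut(T)_a$ preserves each block, while any element $\gamma_a\in H_a\setminus\Aut(T)_a$ (existing since $\Aut(T)$ is transitive on $Z$ and has index $2$ in $H$) is an anti-automorphism of $T$ fixing $a$ and so interchanges $a^+$ and $a^-$. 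The main obstacle will be the case analysis at the heart of the middle-recovery claim, requiring one to verify on the unit circle that for every cyclic position of $w$ the shared pair in any 2-edge 4-subset containing $\{x,y,z\}$ contains the middle of $\{x,y,z\}$, and that density forces both candidate shared pairs through the middle to be realised.
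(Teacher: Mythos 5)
Your identification of $\Aut(N_3)$ with $H$ is sound: recovering the ``middle'' of an edge as the unique vertex lying in the shared pair of every $2$-edge $4$-set extending that edge is a correct $\emptyset$-definition of $R$ from $E$ (the paper uses a slightly different but equally local formula, $R(y;x,z)\leftrightarrow\exists u\exists v(uxy,vyz\mbox{ edges},\ uxz,vxz\mbox{ non-edges})$), and your arguments for non-$3$-homogeneity and non-$2$-primitivity via the block system $\{a^+,a^-\}$ are both correct and essentially the paper's.

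The gap is in the set-homogeneity argument, which is the heart of the proposition. Your ``gluing'' step asserts that because the middle pattern of each $4$-vertex $E$-type is rigid \emph{up to isomorphism}, the middle patterns on the $4$-subsets of $U$ determine the $R$-isomorphism type of $U$. This does not follow, and it begs the question. The middle is definable from $E$ only using existential quantifiers over all of $Z$; it is \emph{not} recoverable inside the finite induced hypergraph $(U,E)$, which is precisely why $N_3$ fails to be homogeneous. Consequently an isomorphism $\sigma:(U,E)\to(V,E)$ need not carry middles to middles, and knowing each $4$-subset's $R$-type up to isomorphism does not pin down a global $R$-type: the witnessing isomorphisms on overlapping $4$-subsets need not be compatible. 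For instance a complete $4$-subset has $E$-automorphism group $S_4$ but $R$-automorphism group of order $2$, so it admits twelve distinct $R$-structures consistent with its $E$-structure, and reconciling such choices across overlaps is a genuine global constraint. This is exactly where the paper does its real work: it introduces the equivalence relation $a\sim_U b$ ($abx$ an edge for all $x\in U\setminus\{a,b\}$), reduces to a ``balanced'' transversal $A$, proves (Claim 1) that a balanced set has odd size with $\Aut(A,E)=\Aut(A,R)=D_{|A|}$ so that $\sigma|_A$ \emph{is} automatically an $R$-isomorphism, splits into positive/negative orientation cases, and then builds a possibly different map $\alpha:U\to V$ (reordering within $\sim$-classes according to orientation) which it verifies by cases to preserve $R$ (Claim 2). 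Note also that the induction on $|U|$ you announce is never actually invoked in your $|U|\geq 4$ step. To repair your proof you would need to replace the gluing assertion with an argument of this kind.
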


\begin{proof} We first show that $G=\Aut(N_3)$ preserves $R$, and hence equals $H$. Indeed, if $xyz$ is an edge, then we have
$$R(y;x,z)\leftrightarrow[\exists u\exists v( uxy, vyz \mbox{~are edges and~} uxz, vxz \mbox{~are non-edges})].$$
In particular, as $R(x;y,z) \to \neg R(y;x,z)$, $(Z,E)$ is not 3-homogeneous. Also, $G$ is not 2-primitive, since $G_x$ preserves an equivalence relation on $Z\setminus \{x\}$ with classes $\{y: x \to y\}$ and $\{y: y \to x\}$; indeed, distinct $y,z$ are in the same equivalence class if and only if $R(y;x,z)\vee R(z;x,y)$ holds.

Next, we show
that $N_3$ is set-homogeneous. For any finite $U\subset Z$, define $\sim_U$ on $U$, putting 
$$a\sim_U b \Leftrightarrow \big((a=b) \vee (\forall x\in U\setminus \{a,b\}) abx \mbox{~is an edge}\big).$$
 It is easily seen that $\sim_U$ is an equivalence relation on $U$, and its classes are convex in the (clockwise) circular order induced from $N_3$  and are complete subhypergraphs. We say that $U$ is {\em balanced} if all $\sim_U$-classes have size 1.

{\em Claim 1.} Suppose that finite $U\subset Z$ is balanced. Then $|U|$ is odd and $\Aut(U,E)=\Aut(U,R)$ and equals the dihedral group $D_{|U|}$.

{\em Proof of Claim.} For any distinct $x,y\in U$ with $x\sim_U y$ and $x\to y$, there is $z\in U$ such that $x$ and $y$ lie in opposite segments with respect to $z$, that is, we have $y\to z \to x$. Furthermore, assuming further  there is no $w\in U$ with $x\to w\to y$, such $z$ is unique; for if $z,z'\in y^+\cap x^-$ then $z \sim_U z'$. Also, if $z\in U$ and $z^+ \cap U=\{x_1,\ldots,x_r\}$ with $z=x_0\to x_1 \to \ldots\to x_r$, then for each $i=0,\ldots,r-1$ there is unique $y_i\in z^-\cap U$ with $x_{i+1} \to y_i\to x_i$, and $U=\{x_0,x_1,\ldots,x_r,y_1,\ldots,y_r\}$, so $|U|=2r+1$.

It is now easily seen that with $D_{2r+1}$ acting in the natural
way on $U$ preserving the induced separation relation $S$, we have $D_{2r+1} \leq \Aut(U,R)\leq \Aut(U,E)$. To see that $\Aut(U,E)\leq D_{2r+1}$, suppose $g\in \Aut(U,E)$ fixes $x_0, x_i$ with $i>0$ (in the notation above). We claim that $g$ fixes $x_1$, so suppose $i>1$. Indeed, $g$ fixes setwise 
$\{x_j:0< j<i\}=\{w\in U: (\forall y\in U)(Eyx_0x_i\to Eyx_0w)\}$. Thus $g$ fixes $y_1$, which is the unique element of $U\setminus \{x_0,\ldots,x_i\}$ such that there is exactly one $w\in \{x_j: 0<j\leq i\}$ (namely $x_1$) with $\neg Ey_1x_0w$. Hence $g$ fixes $x_1$. 

Now $y_1\in U$ is unique in $U$ such that $x_0x_1y_1$ is a non-edge, so $g$ fixes $y_1$. Now $x_2$ is unique in $U\setminus\{x_0,x_1\}$ such that $|\{y\in U: x_1x_2y \mbox{~is a non-edge}\}|=1$, so $g$ fixes $x_2$. Continuing this way, we find $g=1$, yielding $\Aut(U,E)=D_{2r+1}$ and hence the claim.

\medskip

Now suppose that
 $\sigma:U\to V$ is an isomorphism between finite subhypergraphs of $N_3$. Since $\sim_U$ is defined from $E$, $\sigma$ maps $\sim_U$-classes to $\sim_V$-classes. Let $U_1,\ldots,U_n$ be the $\sim_U$-classes of $U$ listed in the clockwise cyclic ordering, and for each $i$ let $a_i\in U_i$, let $b_i=\sigma(a_i)$, let $V_i$ be the $\sim_V$-class of $b_i$, and put $A=\{a_1,\ldots,a_n\}$, and $B=\{b_1,\ldots,b_n\}$.
Then $A$ and $B$ are balanced, and by the claim $\sigma$ induces an isomorphism $(A,R)\to (B,R)$. We shall say that $\sigma$ is {\em positive} if it preserves the positive circular orientation on $A$, and {\em negative} otherwise (one of these holds, by the claim). 

Since $\sigma$ maps $\sim_U$-classes to $\sim_V$-classes, it follows that for each $i=1,\ldots,n$ the equivalence classes $U_i$ and $V_i$  have the same size $t_i$. For each $i=1,\ldots,n$ write $U_i=\{a_{i1},\ldots,a_{it_i}\}$ with $a_{ij}\to a_{ik}$ whenever $j<k$.  For each $i=1,\ldots,n$ we also put $V_i=\{b_{i1},\ldots,b_{it_i}\}$, where if $\alpha$ is positive we have $b_{ij}\to b_{ik}$ whenever $j<k$, and if $\sigma$ is negative we have $b_{ij}\to b_{ik}$ whenever $j>k$. 

Finally, define $\alpha:U\to V$ by putting $\sigma(a_{ij})=b_{ij}$ for each $i=1,\ldots,n$ and $j=1,\ldots,t_i$. It suffices to prove the following claim.

{\em Claim 2.} The map $\alpha:(U,R)\to (V,R)$ is an isomorphism.

{\em Proof of Claim.} For triples within a $\sim_U$-class, $\sigma$ preserves $R$ since it preserves or reverses the ordering given by $\to$. For triples meeting three distinct $\sim_U$-classes, $\alpha$ preserves $R$ as $\sigma|_A$ does, and as elements outside a $\sim$-class are $\to$-related to all elements of the $\sim_U$-class in the same way. For triples containing two elements $a_{ik}, a_{il}$ ($k<l$) from one class $U_i$, and one element $a_{jm}$ from another class $U_j$, suppose first that $\sigma$ is positive, and that $a_i \to a_j$. Then $a_{ik}, a_{il}\in a_{jm}^-$, and $a_{ik} \to a_{il}$, so $R(a_{il};a_{ik}, a_{jm})$. We have $b_{ik}\to b_{il}$ and $b_{ik}, b_{il}\in b_{jm}^-$, so $R(b_{il}; b_{ik}, b_{jm})$ as required. The other cases (where $a_j \to a_i$, and where $\sigma$ is negative) are similar. 

Given Claim 2, it follows by homogeneity of $(Z,R)$ (see Lemma~\ref{localorder})  that $\alpha$ is induced by some $g\in \Aut(Z,R)=\Aut(Z,E)$, and we have $U^g=V$ as required.

\end{proof}

{\em Proof of Theorem~\ref{3hyper}(ii).} See Proposition~\ref{N3}. \hfill $\Box$

\begin{remark} \label{two-graph} \rm 
The example $N_3$ is a {\em two-graph}, namely a 3-hypergraph with the property that any four vertices carry an even number of hypergraph edges (in this case, 2 or 4). This notion was introduced by D.G. Higman -- see \cite{evans} or \cite{cameron-book} for background (including the infinite case).
\end{remark}

\section{The case $k\geq 4$.}
In this section we first apply methods from Section 3, in particular Lemma~\ref{5verts} and \ref{PkJ}, to prove Theorem~\ref{4hyper}(i). Then in Section 4.2 we prove Theorem~\ref{4hyper}(ii) and (iii), basing our construction on the (unordered) 2-regular countable dense proper $C$-set. Theorem~\ref{6hyper} is proved in Section 4.3, exploiting a 3-branching $D$-set. 
\subsection{Proof of Theorem~\ref{4hyper}(i)}

First, observe that  if $(M,E)$ is an infinite set-homogeneous $k$-hypergraph with $k\geq 4$ whose automorphism group $G$ is not 2-transitive, then $G$ is 3-homogeneous, so  by Theorem~\ref{4hom}(i), $G$
preserves a linear order $<$ on $M$.

We now consider the case where $k=4$. We have not tried hard to apply the methods for larger $k$. 

{\em Proof of Theorem~\ref{4hyper}(i).}
Let $(M,E)$ be a ${\leq}5$-set-homogeneous countably infinite 4-hypergraph whose automorphism group $G$ is not 2-transitive. By 
 Theorem~\ref{4hom}(i)  there is a $G$-invariant dense total order $<$ on $M$, and we adopt the notation $S_{ij}$ and $P^k_J$ from Section 2.2.  By Lemmas ~\ref{5verts} and \ref{PkJ}, each formula $S_{ij}$ with $|i-j|\geq 2$ is realised by a 5-element substructure of $M$, and for each $i=1,2,3,4$, $M$ contains a 5-tuple realising $T_i$. There are 12 such $S_{ij}$, namely, $S_{13}, S_{14}, S_{15},S_{24}, S_{25}, S_{35}$ (where $i<j$) and the corresponding formulas with $i>j$ namely $S_{31}, S_{41}, S_{51}, S_{42}, S_{52}, S_{53}$. By Lemma~\ref{PkJ}, the ways in which  conditions $P^k_J$ ensure that the formulas $S_{ij}$ and $T_i$ are realised are determined by Table 2. 

%\section{table}
\tikzset{ 
    table/.style={
        matrix of nodes,
        row sep=-\pgflinewidth,
        column sep=-\pgflinewidth,
        nodes={
            rectangle,
            draw=black,
            align=center
        },
        minimum height=1.5	em,
        text depth=0.5ex,
        text height=1.5ex,
        nodes in empty cells,
        every even column/.style={
            nodes={fill=gray!30}
        },
        column 1/.style={
            nodes={text width=2em,font=\bfseries}
        },
     %   row 7/.style={
     %       nodes={
     %           fill=black,
     %           text=white,
     %           font=\bfseries
     %       }
     %   },
     % row 18/.style={
     %       nodes={
     %           fill=black,
       %         text=white,
       %         font=\bfseries
     %       }
     %   },
     %    row 29/.style={
        %    nodes={
       %         fill=black,
       %         text=white,
       %         font=\bfseries
       %     }
       % },
       % row 35/.style={
       %     nodes={
       %         fill=black,
       %         text=white,
       %         font=\bfseries
       %     }
        }
    }

\begin{table}
\centering

%\centering
\begin{tikzpicture}

\matrix [table, text width=1.6em]
{
 & $T_{1}$ & $T_{2}$ & $T_{3}$ & $T_{4}$ & $S_{13}$ & $S_{31}$ & $S_{14}$ & $S_{41}$ & $S_{15}$ & $S_{51}$ & $S_{24}$ & $S_{42}$ &
 $S_{25}$ & $S_{52}$ & $S_{35}$ & $S_{53}$ \\
$P^{1}_{1234}$  &  &  &  & 0 & & & & & & 0 & & & & 0 & & 0 \\
$P^{1}_{1245}$   &  & 0 & 0 &  &  & 0 & & & & & & & & & 0 &  \\
$P^{1}_{1235}$   &  &  & 0 & 0 &  & & & 0 & & & & 0 & & & &  \\
$P^{1}_{2345}$  & 0 &  &  &  & 0  & & 0 & & 0 & & & & & & &  \\
$P^{1}_{1345}$  & 0 & 0 &  &  &  & & & &  & & 0 & & 0 & & &  \\
  % &  &  &  &  &  & & & & & & & & & & &  \\
$P^{2}_{123}$  &  &  & 0 & & & &  & 0 & & 0 & & 0 & & 0 & & 0 \\
$P^{2}_{124}$  &  & 0 & 0 & 0 &  & 0 & & & & 0 & & & & 0 & &  \\
$P^{2}_{125}$  &  & 0 &  & 0 &  & 0 & & 0 &  & &  & 0 & & & 0 &  \\
$P^{2}_{134}$  & 0 & 0 &  & 0 &  & & &  & & 0 & 0 & & & & & 0  \\
$P^{2}_{135}$  & 0 & 0 & 0 & 0 &  &  & & 0 & & &  & & 0 & & &  \\
$P^{2}_{145}$  & 0 &  & 0 &  &  & 0 & & & & & 0 & & 0 & & 0 &  \\
$P^{2}_{234}$  & 0 &  &  & 0 & 0 & & 0 & & & & & & & 0 & & 0 \\
$P^{2}_{235}$  & 0 &  & 0 & 0 & 0 & & & & 0 & & & 0 & & & &  \\
$P^{2}_{345}$  &  & 0 &  &  & 0 & & 0 & & 0 & & 0 & & 0 & & &  \\
$P^{2}_{245}$  & 0 & 0 & 0 &  &  &  & 0 &  & 0 & & & & & & 0 &  \\
  % &  &  &  &  &  & & & & & & & & & & &  \\
$P^{3}_{12}$  & & 0 &  &  &  & 0 & & 0 & & 0 &  & 0 &  & 0 &  &  \\
$P^{3}_{13}$  & 0 & 0 & 0 & &  & &  & 0 & & 0 & & & &  & & 0 \\
$P^{3}_{14}$  & 0 &  & 0 & 0 &  & 0 & & & & 0 & 0 &  & & & &  \\
$P^{3}_{15}$  & 0  &  &  & 0 &  & 0 & & 0 &  & &  & & 0 & & 0 &  \\
$P^{3}_{23}$  & 0 &  & 0 &  & 0 &  &  &  & & & & 0 &  & 0 & & 0 \\
$P^{3}_{24}$  & 0 & 0 & 0 & 0 &  &  & 0 & & & &  & &  & 0 &  &  \\
$P^{3}_{25}$  & 0 & 0 &  & 0 &  & &  & & 0 & & & 0 & &  & 0 &  \\
$P^{3}_{34}$  &  & 0 &  & 0 & 0 & & 0 & &  & & 0 &  & & & & 0 \\
$P^{3}_{35}$  &  & 0 & 0 & 0 & 0 & &  & & 0 & &  & & 0 & & &  \\
$P^{3}_{45}$  &  &  & 0 &  &  &  & 0 &  & 0 & & 0 & & 0 & & 0 &  \\
%   &  &  &  &  &  & & & & & & & & & & &  \\
$P^{4}_{1}$  & 0 &  &  &  &  & 0 & & 0 & & 0 &  & &  & &  &  \\
$P^{4}_{2}$  & 0 & 0 &  &  &  & &  & & & & & 0 & & 0 & &  \\
$P^{4}_{3}$  &  & 0 & 0 &  & 0 & & & &  & & & & & & & 0 \\
$P^{4}_{4}$  &  &  & 0 & 0 &  & & 0 & &  & & 0 & &  & & &  \\
$P^{4}_{5}$  &  &  &  & 0 &  &  &  &  & 0 & & & & 0 & & 0 &  \\
%   &  &  &  &  &  & & & & & & & & & & &  \\   
};

\end{tikzpicture}
\caption{}
\end{table}

By Lemma~\ref{PkJ}, we must show that there do not exist sets $J_1, J_2,J_3,J_4$ so that  if $P^i_{J_i}$ hold for each $i=1,\ldots,4$ then all formulas $S_{ij}$ and $T_1,T_2,T_3,T_4$ are realised. Since $|J_1|=4$ and $|J_2|=3$ and they are both subsets of $\{1,2,3,4,5\}$, $|J_1\cap J_2|\geq 2$. We consider all possible 2-sets which could lie in the intersection, using symmetry (essentially, reversing the order) to reduce the number of cases. As a small abuse, we shall write $ijK$ for the set $J=\{i,j\}\cup K$.

\begin{enumerate}
\item $M$ realises $P^1_{12J}$ and $P^2_{12K}$. These do not ensure realisation of $T_1, S_{13}, S_{14},S_{15},S_{24}$ and $S_{25}$ (see Table 2). So the latter must arise from  $P^3_J$ and $P^4_J$. The only cases which realise $S_{13}$  are the  following cases.
 \begin{enumerate}
\item $P^4_3$ which needs $T_1, S_{14}, S_{15}$ from some $P^3_J$, which cannot occur.
 
\item $P^3_{23}$ which needs $S_{14}, S_{15}$ from $P^4_J$,  impossible.
 
\item $P^3_{34}$ which needs $T_{1}, S_{25}$   from $P^4_J$, impossible.
 
\item  $P^3_{35}$ which needs $S_{14}, T_1$ from $P^4_J$, again impossible.

\end{enumerate}

\item $M$ realises $P^1_{13J}$ and $P^2_{13K}$.  These do not realize $S_{13}, S_{31}, S_{14}, S_{15}, S_{35}$ which need to be realized by $P^3$ and $P^4$. The only possibilities of $S_{13}$ are:
 \begin{enumerate}
\item $ P^4_3$,  which still needs to realize $S_{31}, S_{14}$ from some $P^3_J$, which cannot occur.
 
\item $P^3_{23}$ which needs $S_{31}, S_{14}$ from $P^4_J$, impossible.
 
\item $P^3_{34}$ which needs $S_{31}, S_{35}$   from $P^4_J$, impossible.
 
\item $ P^3_{35}$ which needs $S_{31}, S_{35}$ from $P^4_J$, which cannot happen.
\end{enumerate}

\item $M$ realises $P^1_{14J}$ and $P^2_{14K}$. This is almost the same as the last case. These conditions  do not realize $S_{13}, S_{14}, S_{41}, S_{15}, S_{42}$ which need to be realized by $P^3 $ and $P^4$. As before, the only possibilities of $S_{13}$ are: 
\begin{enumerate} 
\item $P^4_3$ which needs $ S_{41}, S_{14}$ from some $P^3_J$, impossible.
 
\item $P^3_{23}$, needs $S_{41}, S_{14}$ from $P^4_J$, impossible.
 
\item $P^3_{34}$, needs $S_{41}, S_{15}$   from $P^4_J$, impossible.
 
\item  $P^3_{35}$,  needs $S_{41}, S_{14}$ from $P^4_J$, again impossible.
\end{enumerate}

\item $M$ realises $P^1_{15J}$ and $P^2_{15K}$. These do not realise $ S_{13}, S_{14}, S_{15}, S_{51}, S_{52},S_{53}$, which need to be realized by $P^3$ and $P^4$. Again, the only possibilities for $S_{13}$ are:
$P^4_3, P^3_{23}, P^3_{34}, P^3_{35}$. The first three need both $S_{15}$ and $S_{51}$ from the remaining condition, which is clearly impossible.  $P^3_{35}$ needs $S_{14}, S_{51}$ from $P^4_J $, again impossible.

\item $M$ realises $P^1_{23J}$ and $P^2_{23K}$. These do not realize $S_{31}, S_{24}, S_{25}, S_{35},T_2$  which need to be realized by $ P^3$ and $P^4$. The only possibilities of  $S_{31}$ are:
 \begin{enumerate}
\item $P^4_1$, which  needs $S_{24}, S_{25},T_2$ from some $ P^3_J$, impossible.
 
\item  $P^3_{12}$, needs $ S_{24}, S_{25}$ from $P^4_J$, impossible.
 
\item $P^3_{14}$,  needs $S_{25}, S_{35},T_2$   from $P^4_J$, impossible.
 
\item $P^3_{15}$,  needs  $S_{24},T_2$ from $P^4_J$.
\end{enumerate}

\item $M$ realises $P^1_{24J}$ and $P^2_{24K}$. These do not realize $S_{41}, S_{24}, S_{42}, S_{25}$,  which need to be realized by $P^3$ and $P^4$. The only possibilities of $S_{41}$ are:
 \begin{enumerate}
\item  $P^4_1$, needs $S_{42}, S_{24}$ from some $P^3_J$, impossible.
 
\item $P^3_{12}$, needs $S_{24}, S_{25}$ from $P^4_J$, impossible.
 
\item $P^3_{13}$,  needs $S_{24}, S_{25}$   from $P^4_J$.
 
\item $P^3_{15}$,  needs $S_{42}, S_{24}$ from $P^4_J$, impossible.
\end{enumerate}

\end{enumerate}

The remaining cases follow by symmetry from the above, arguing with the order reversed. For example, the argument in Case (I) also eliminates that where $M$ realises $P^1_{45J}$ and $P^2_{45K}$. \hfill $\Box$

\subsection{Set-homogeneous 4-hypergraphs with 2-transitive not 2-primitive  automorphism group}
We here prove Theorem~\ref{4hyper}(ii), a consequence of the following result. In this subsection $(M,C)$ denotes the 3-branching dense proper  $C$-set as defined in Section 2.1 (so the reduct of the structure  $(M,C,\leq)$  from Theorem~\ref{2-reg}).

\begin{proposition}\label{ex4hyp}
 Define a 4-hypergraph structure on $M$ whose edge set $E$ consists of 4-sets of form $\{x_1,x_2,y_1,y_2\}$ such that $C(x_i;y_1,y_2)$ and $C(y_i;x_1,x_2)$ hold for $i=1,2$. 
Then $M_4=(M,E)$ is set-homogeneous and has 2-transitive but not 2-primitive automorphism group, so is not a homogeneous 4-hypergraph.
\end{proposition}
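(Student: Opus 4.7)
The plan follows the template of Section~3: show that $\Aut(M_4) = \Aut(M,C)$, use this to read off the transitivity, primitivity, and homogeneity properties, and then exploit the homogeneity of $(M,C)$ to upgrade abstract $E$-isomorphisms of finite substructures to genuine automorphisms.

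First I would establish $\Aut(M_4) = \Aut(M,C)$. The inclusion $\Aut(M,C) \leq \Aut(M_4)$ is immediate because $E$ is $\emptyset$-definable in $(M,C)$. For the reverse I would show that $C$ is $\emptyset$-definable in $(M,E)$. In tree terms, edges of $M_4$ are exactly the 4-sets of ``crossing'' shape (pair $\{x_1,x_2\}$ in one cone and $\{y_1,y_2\}$ in another at the bottom meet, with distinct internal meets), whereas the other possible 4-set shapes in a 3-branching $C$-set -- type $(3,1)$ or type $(2,1,1)$ at the bottom meet -- produce non-edges. So for distinct $x,y,z$ the relation $C(x;y,z)$ should be recoverable by examining which extensions $\{w,x,y,z\}$ are edges and what pair-partitions those edges force; the density axioms (C5)--(C7) supply the necessary witnesses. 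A fallback if direct first-order definability is fiddly is an orbit argument: $\Aut(M,C)$ has exactly four orbits on ordered triples of distinct points (the three $C$-choices and the tripod), and a short case analysis over 4- and 5-subsets shows that $\Aut(M_4)$ cannot fuse them.

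Once the group equality is in place the remaining properties are short. Two-transitivity of $\Aut(M_4)$ comes from that of $\Aut(M,C)$; non-2-primitivity from the $\Aut(M,C)_x$-invariant equivalence relation $b\sim c \Leftrightarrow b=c \vee C(x;b,c)$ on $M\setminus\{x\}$, which is proper and non-trivial by the 3-branching hypothesis; and non-homogeneity from the observation that on an edge $U=\{x_1,x_2,y_1,y_2\}$ the setwise stabiliser in $\Aut(M,C)$ preserves the pair partition $\{x_1,x_2\}\mid\{y_1,y_2\}$, so induces only a subgroup of $S_2\wr S_2<\Sym(U)$, not the full $\Sym(U)$ that a homogeneous edge would require.

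Set-homogeneity is the main task. Given finite isomorphic $(U,E)\cong(V,E)$ inside $M_4$, it suffices, by homogeneity of $(M,C)$, to produce a $C$-isomorphism $U\to V$ (possibly different from the given $E$-isomorphism). I would argue by induction on $|U|$: the cases $|U|\leq 3$ are covered by 2-transitivity and the short list of 3-point $C$-types, and for $|U|\geq 4$ I would imitate the cone-decomposition used in the proof of Proposition~\ref{exist}, defining from $(U,E)$ the partition of $U$ given by the cones at the bottom meet of $U$ in the ambient tree, applying the induction hypothesis inside each cone, and gluing the resulting $C$-isomorphisms using the cross-cone edge data. The main obstacle is the $E$-definability of this cone decomposition -- in particular, distinguishing the $(3,1)$, $(2,2)$, and $(2,1,1)$ top-level shapes and correctly identifying cones within each -- but once achieved the recursion closes cleanly and homogeneity of $(M,C)$ finishes the argument.
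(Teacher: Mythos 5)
Your first two steps --- the identification $\Aut(M_4)=\Aut(M,C)$ via $\emptyset$-definability of $C$ from $E$ (with the reduct classification as a fallback), and the deduction of 2-transitivity, non-2-primitivity and non-homogeneity --- match the paper's proof in substance. One caution: the $C$-set underlying $M_4$ is the \emph{2-regular} one (branching number 3 in the paper's convention), so every internal node has exactly two cones. There are no ``tripod'' triples and no $(2,1,1)$ four-point shapes, and $\Aut(M,C)$ has three, not four, orbits on ordered triples of distinct points. A 4-set is an edge precisely when its bottom meet splits it $(2,2)$ between the two cones there, and a non-edge precisely when the split is $(3,1)$ (the caterpillar shape). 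This does not derail your outline, but it changes the case analysis you would actually have to carry out.

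The genuine gap is in the set-homogeneity argument. You propose to transport the cone decomposition from Proposition~\ref{exist}, and you yourself flag the $E$-definability of that decomposition as ``the main obstacle'' --- but that obstacle is the entire content of the proof, and it does not resolve the way it did for $M_3$. There the two cones at the bottom meet were separated using the invariant linear order together with the asymmetry of $E$ across the partition; here there is no order, the cross-cone edge data is completely symmetric (all $(2,2)$ splits across the partition are edges, all $(3,1)$ splits are non-edges), and the proposed characterisation fails outright when one cone is a singleton: such a vertex is simply an isolated vertex of $(U,E)$, and an induced subhypergraph can have many isolated vertices (a null subhypergraph consists entirely of them), so conditions of the kind you describe do not pin down the partition. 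The paper's proof of Proposition~\ref{ex4hyp} avoids the bottom-meet decomposition altogether: it inducts on $|U|$ by choosing a \emph{maximal null subhypergraph} $A\subseteq U$ (which is exactly a maximal caterpillar chain $a_1,\ldots,a_n$ with $C(a_i;a_j,a_k)$ for $i<j<k$), notes that $\sigma(A)$ is again one, and then introduces a chain of $E$-definable notions relative to $A$ --- the ``high'' vertex, the substitution sets $S(a)$, ``isolated'', ``low'' and ``special'' elements --- which allow it either to split off a low part $S(a_1)$ or to strip the special initial segment $a_1,\ldots,a_r$ of the chain, and recurse. If you insist on the cone-at-the-bottom-meet plan you would need at least a separate treatment of the null case and of the initial chain of isolated vertices, plus a uniqueness proof for the partition when both cones are large enough; that is not impossible, but it amounts to rebuilding most of the paper's machinery, and as written your proposal leaves the decisive step unproved.
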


\begin{proof} As noted in Section 2.1,  $(M,C)$ is a homogeneous structure. The 4-sets prescribed to form edges of $M_4$ are those as in Figure 6.

%fig 3
\begin{figure}[H]
	\centering
	\begin{tikzpicture}
	
	\draw (7,0)--(5,2)node[above]{$x_{1}$}; \draw (5.6,1.4)--(6.7,2)node[above]{$x_{2}$};
	\draw (7,0)--(9,2)node[above]{$y_{1}$}; \draw (8.5,1.5)--(7.5,2)node[above]{$y_{2}$};
	
	\end{tikzpicture}
	\caption{}
	
\end{figure}

Let $G=\Aut(M_4)$. We observe first that $G$ preserves the relation $C$. Indeed, $C$ is $\emptyset$-definable in $(M,E)$: 
for  $x,y,z\in M$  we have that $C(x;y,z)$ holds if and only if 
$$\big( y=z\wedge x \neq y\big) \vee \big( \exists u\exists v( x,y,z,u,v\mbox{~are distinct and has only the   edge~} yuvz)\big).$$ See Figure 7 for a configuration indicating the left-to-right direction here.
Thus, $\Aut(M_4)=\Aut(M,C)$. (This can also be proved via \cite[Corollary 2.3]{bodirsky}, as done in the proof of Claim 1 in Proposition~\ref{6hyp} below.) The  group $\Aut(M,C)$ is 2-transitive (by homogeneity of $(M,C)$) but not 2-primitive -- for $a\in M$ the stabiliser $G_a$ preserves a proper non-trivial equivalence relation $\sim$ on $M\setminus \{a\}$, where $x \sim y\Leftrightarrow C(a;x,y)$. Thus, $\Aut(M_4)$ is 2-transitive but not 2-primitive. 
\begin{figure}[H]
	\centering
	\begin{tikzpicture}
	
	\draw (0,0)--(4,3)node[above]{$z$};
	\draw (0,0)--(-0.9,2)node[above]{$x$};
	\draw (3.3,2.45)--(2.7,3.5)node[above]{$v$};
	
	\draw (1.6,1.2)--(0.5,3.6)node[above]{$y$};
	\draw (0.9,2.7)--(1.6,3.6)node[right]{$u$};
	
	\end{tikzpicture}
	\caption{}
	
\end{figure}

We show next that $M_4$ is set-homogeneous. So suppose $U,V$ are finite subsets of $M$ which carry isomorphic induced 4-hypergraphs, with $\sigma:(U,E) \to (V,E)$ an isomorphism.  We show by induction on $|U|$  that $(U,C)\cong (V,C)$, from which it follows by homogeneity of $(M,C)$  that there is $g\in \Aut(M,C)=\Aut(M_4)$ with $U^g=V$, as required. We may assume that $|U|\geq 5$, essentially as $\Aut(M,C)$ has just two orbits on 4-sets. 

By a {\em null} hypergraph we mean one none of whose 4-subsets are edges. It is easily seen that if $W$ is a null (induced) subhypergraph of $(M,E)$ of size $n$, then  we may write $W=\{w_1,\ldots,w_n\}$ so that $C(w_i;w_j,w_k)$ whenever $i<j<k$.  We may assume that $U$ and $V$ are not null, since otherwise they carry isomorphic $C$-structures. 

Choose a null subhypergraph $A$ of $U$ of maximal size $n$ say, and let $B=\sigma(A)$ (so $B$ is a null subhypergraph of $V$ of maximal size). As noted above, we may write $A=\{a_1,\ldots,a_n\}$ and $B=\{b_1,\ldots,b_n\}$ so that $C(a_i;a_j,a_k)$ and $C(b_i;b_j,b_k)$ whenever $i<j<k$ (but we are {\em not} assuming that $\sigma(a_i)=b_i$ for each $i$).
 We may assume $n\geq 4$, since otherwise $|U|\leq 4$, which is easily handled.

% It follows by the maximality of $n$ that for each $x\in U\setminus \{a_{n-1},a_n\}$ we have $C(x;a_{n-1},a_n)$, and for each $x\in V\setminus\{b_{n-1},b_n\}$, $C(x;b_{n-1},b_n)$ holds. %Since $a_{n-1}, a_n$ are the only pair in $A$ with this property, it follows that $\alpha(A^*)=B^*$, where  $A^*:=\{a_{n-1},a_n\}$ and $B^*=\{b_{n-1},b_n\}$.

We say that $u\in  U\setminus A$ is {\em high} if $A \cup \{u\}$ has a single edge. By the maximality of $|A|$, if there is high $u\in U\setminus A$ then $u$ is uniquely determined, and $C(a_n;u,a_{n-2})$ holds (the edge is $ua_{n-2}a_{n-1}a_n$). Furthermore in this case, $v:=\sigma(u)$ is high (in the corresponding sense) in $V$, is uniquely determined by this property, and $C(b_n;v,b_{n-2})$, and also $\sigma(\{a_{n-2},a_{n-1},a_n\})=\{b_{n-2},b_{n-1},b_n\}$. If such $u,v$ exist, put $A^*:=\{u,a_{n-2},a_{n-1},a_n\}$ and $B^*:=\sigma(A^*)=\{v,b_{n-2},b_{n-1},b_n\}$, and otherwise put $A^*=B^*=\emptyset$.
%Now if $v=u^\alpha$, then $B \cup \{v\}$ has a single edge. Observe that $C(a_n;u,a_{n-2})$ and $C(b_n;v,b_{n-2})$ hold. Furthermore, by the maximality of $|A|$, $u$ and $v$ are uniquely determined. We have $\{a_{n-2},a_{n-1},a_n\}^\alpha=\{b_{n-2},b_{n-1},b_n\}$.

%{\em Case 1.} %Suppose there is no $u\in U\setminus A$ such that $A \cup \{u\}$ has a single edge. Since $\alpha$ preserves this property, there is no $v\in V\setminus B$ such that $B \cup \{v\}$ has a single edge. This means there is no $u\in U\setminus A$ such that $C(a_n;u,a_{n-2})$, and similarly there is no $v\in V\setminus B$ such that $C(b_n;v,b_{n-2})$.
For each $a\in A\setminus A^*$ put $S(a)=\{x\in U: (A\setminus \{a\})\cup \{x\} \mbox{~is null of size~}n\}$ and for $b\in B\setminus B^*$ define $S(b)$ similarly.  It can be checked that the sets $S(a)$ partition $U\setminus A^*$ and the sets $S(b)$  partition $V\setminus B^*$, and that $\alpha$ respects these partitions. We shall say that $a\in A\setminus A^*$ is {\em isolated} if $|S(a)|=1$, and similarly for $b\in B\setminus B^*$. Let
$I(A)=\{a\in A: a\mbox{~is isolated}\}$, and define $I(B)$ similarly. Thus, $\sigma(I(A))=I(B)$.
%Thus $\alpha$ maps isolated elements  of $A$ to isolated elements of $B$. %By the last paragraph, $a_{n-1}, a_n, b_{n-1}, b_n$ are isolated, and by our case assumption $a_{n-2}$ and $b_{n-2}$ are also isolated.  

If $a\in A\setminus A^*$ is non-isolated, we shall say that $S(a)$ is {\em low} if for any distinct elements $x,y\in S(a)$ and distinct $z,w\in A\setminus \{a\}$,  $xyzw$ is an edge (with a similar definition for $S(b)$ with $b\in B\setminus B^*$ non-isolated). By considering the $C$-relation, it can be checked that $S(a)$ is low if and only if $a=a_1$ and $a_1$ is non-isolated. Since $\sigma$ respects lowness, it follows that  $S(a_1)$ is low if and only if $S(b_1)$ is low, and that if this holds then $\sigma(S(a_1))=S(b_1)$ so they carry isomorphic hypergraph structures, and likewise $\sigma$ induces an induced hypergraph  isomorphism $U\setminus S(a_1) \to V\setminus S(b_1)$. By induction it follows that the   $C$-sets induced on $S(a_1)$ and $S(b_1)$ are isomorphic, as are the $C$-sets  induced on $U\setminus S(a_1)$ and $V\setminus S(b_1)$, and hence that the  $C$-sets induced on $U$ and $V$ are isomorphic, as required.

Thus, we may suppose that $a_1$ and $b_1$ are both isolated. Let $r$ be maximal so that $a_1,\ldots,a_r$ are all isolated. %By our assumption that $U$ is not null, $r<n$. 
We shall say that an isolated element $a\in A$ is {\em special}, if for any  $a'\in I(A)\cup A^*$, non-isolated $c\in A\setminus A^*$, and distinct $c_1,c_2\in S(c)$, the set $aa'c_1c_2$ is a non-edge. It can be checked that the special elements of $A$ are exactly $a_1,\ldots,a_r$. See Figure 8 for the intended configuration in terms of the $C$-relation. Since $\sigma$ respects specialness, the special elements of $B$ are $b_1,\ldots,b_r$, and $\sigma(\{a_1,\ldots,a_r\})=\{b_1,\ldots,b_r\}$. Thus, $\sigma$ induces a hypergraph isomorphism $U\setminus\{a_1,\ldots,a_r\} \to V\setminus \{b_1,\ldots,b_r\}$. Since these sets are smaller than $|U|$, it follows by induction that they carry isomorphic  $C$-set structures, and hence that $U$ and $V$ carry isomorphic  $C$-set structures, as required.

\begin{figure}[H]
	\centering
	\begin{tikzpicture}
	\tikzstyle{every node}=[font=\tiny]
	\draw (0,0)--(4,3)node[above]{$a_{n}$};
	\draw (0,0)--(-.2,1.7)node[above]{$a_{1}$};
	%\draw (2,1.5)--(1.8,3);
	\draw [decorate,decoration={brace,amplitude=7pt,raise=-10pt},yshift=0pt]
	(1.1,3.5) -- (2,3.9) node [black,midway,xshift=-0.3cm, yshift=0.4cm]  {\footnotesize $S({a_{r+1}})$};
	
	\draw (.5,.4)--(.3,2.1)node[above]{$a_{2}$};
	\draw (1.4,1.05)--(1.2,2.6)node[above]{$a_{r}$};
	%\draw [decorate,decoration={brace,amplitude=10pt,raise=-4pt},yshift=0pt]
	%(-.5,1.8) -- (2,3.2) node [black,midway,xshift=-0.3cm, yshift=0.4cm]  {\footnotesize $S$};
	\draw[dashed] (.5,1.4)--(1.3,2);
	\draw (3,2.25)--(2.8,3.3)node[above]{$a_{n-2}$};
	%\draw (2.6,2.5)--(2.8,3);
	%\draw (2.2,3.1)--(2.5,3);
	\draw (2,1.5)--(1.8,3)node[above]{$a_{r+1}$};
	\draw (1.95, 2)--(2.1,2.5);
	\draw (1.9,2.2)--(1.6,2.6);
	\draw[dashed] (2,2)--(2.95,2.6);
	\draw (3.5,2.6)--(3.3,3.5)node[above]{$a_{n-1}$};
	%\draw (3.4,2.95)--(3.7,3.3);
	
	\end{tikzpicture}
	\caption{}
\end{figure}

%\medskip

%{\em Case 2.} There is $u\in U\setminus A$ such that $A \cup \{u\}$ has a single edge. Now if $v=u^\alpha$, then $B \cup \{v\}$ has a single edge. Observe that $C(a_n;u,a_{n-2})$ and $C(b_n;v,b_{n-2})$ hold. Furthermore, by the maximality of $|A|$, $u$ and $v$ are uniquely determined. We have $\{a_{n-2},a_{n-1},a_n\}^\alpha=\{b_{n-2},b_{n-1},b_n\}$. Now for $a\in A\setminus \{a_{n-2},a_{n-1},a_n\}$ define $S(a)$ as in Case 1. We find as above that the sets $S(a)$ partition $U\setminus\{u,a_{n-2},a_{n-1},a_n\}$ and the $S(b)$ partition $V\setminus \{v,b_{n-2},b_{n-1},b_n\}$. We complete the proof essentially as above, building up a $C$-isomorphism $U\setminus \{u,a_{n-2},a_{n-1},a_n\} \to V\setminus \{v,b_{n-2},b_{n-1},b_n\}$, and completing it to an isomorphism $U \to V$ taking $(u,a_{n-2},a_{n-1},a_n)$ to $(v,b_{n-2},b_{n-1},b_n)$. 

\end{proof}

The following result now gives us Theorem~\ref{4hyper}(iii).

\begin{corollary} \label{linbet} If $(N,E)$ is a ${\leq}5$-set-homogeneous countably infinite 4-hypergraph whose automorphism group $K$  is not 2-primitive, then either $(N,E)$ is isomorphic to the structure $M_4$ of Proposition~\ref{ex4hyp}  or its complement, or $K$ preserves a linear betweenness relation on $N$. 
\end{corollary}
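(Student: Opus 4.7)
The plan proceeds as follows. By Theorem~\ref{4hyper}(i), $K$ is 2-transitive on $N$, and since any three vertices of a 4-hypergraph span the (edgeless) substructure of size three, ${\leq}3$-set-homogeneity immediately upgrades to 3-homogeneity of $K$ as a permutation group. We now apply Theorem~\ref{4hom}(ii): since $K$ is 3-homogeneous, 2-transitive, but not 2-primitive, $K$ preserves on $N$ either a dense linear betweenness relation (yielding the desired conclusion) or a ternary relation $C$ for which $(N,C)$ is elementarily equivalent to the universal homogeneous 2-regular $C$-set. In the latter case, since that theory is $\omega$-categorical and $N$ is countable, we obtain $(N,C) \cong (M,C)$, with $K \leq \Aut(N,C)$.

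The core of the argument is then to compute the $\Aut(M,C)$-orbits on unordered 4-subsets of $M$. Viewing $(M,C)$ via its underlying 2-regular semilinear order $(X,\preceq)$, every 4-subset $\{a_1,\ldots,a_4\}$ determines a Steiner subtree whose root distributes the four points among its two cones: either as $(2,2)$ (the defining configuration for edges of $M_4$ in Proposition~\ref{ex4hyp}) or as $(3,1)$, which by iteration yields a ``caterpillar'' shape. These two shapes are non-isomorphic as induced $C$-substructures, and by homogeneity of $(M,C)$ each constitutes a single orbit, so $\Aut(M,C)$ has exactly two orbits on 4-subsets.

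To finish, observe that since $K$ is not 2-primitive, $K \neq \Sym(N)$, so $E$ is neither empty nor the full set of 4-subsets, and $K$ acts with exactly two orbits on 4-subsets (edges and non-edges). The inclusion $K \leq \Aut(N,C)$ forces each $K$-orbit to lie inside an $\Aut(N,C)$-orbit; since both groups have exactly two orbits on 4-subsets, their orbit partitions must coincide. Hence $\Aut(N,C)$ preserves $E$, so $K = \Aut(N,C)$, and $E$ equals either the set of $(2,2)$-split 4-subsets (giving $(N,E) \cong M_4$ by Proposition~\ref{ex4hyp}) or its complement (giving $(N,E) \cong M_4^c$).

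The main obstacle is the orbit computation of the second paragraph: one must verify, using the $C$-set axioms together with 2-regularity, that every 4-subset falls into exactly one of the two tree shapes, and that edges of $M_4$ correspond precisely to the $(2,2)$-split so that the $(3,1)$-splits form the complementary orbit. Everything else is a routine orbit-counting consequence of Cameron's Theorem~\ref{4hom}(ii) combined with the $\omega$-categoricity of the universal 2-regular $C$-set.
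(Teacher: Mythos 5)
Your proposal is correct and follows essentially the same route as the paper's proof: deduce 2-transitivity from Theorem~\ref{4hyper}(i), invoke Theorem~\ref{4hom}(ii) to reduce to the linear betweenness case or the 2-regular $C$-set case, and then match the two $K$-orbits on 4-subsets (edges and non-edges, via ${\leq}4$-set-homogeneity) with the two $\Aut(N,C)$-orbits to conclude $(N,E)\cong M_4$ or $M_4^c$. The only difference is that you spell out details the paper leaves implicit -- the $(2,2)$ versus $(3,1)$ tree-shape computation showing $\Aut(M,C)$ has exactly two orbits on 4-sets, and the observation that non-2-primitivity rules out $E$ being empty or complete -- and both of these are correct.
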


\begin{proof} The set-homogeneity assumption ensures that $K$ is 3-homogeneous on $N$. Hence, by Theorem~\ref{4hyper}(i), $K$ is 2-transitive on $N$. Hence, by Theorem~\ref{4hom}(ii), $K$ preserves on $N$ a linear betweenness relation or a relation $C$ on $N$ so that $(N,C)$ is isomorphic to the structure $(M,C)$ from Theorem~\ref{2-reg}. 

So suppose $K\leq \Aut(N,C)$. Since $\Aut(N,C)$ has two orbits on 4-sets and $(N,E)$ is ${\leq}5$-set-homogeneous, $K$ also has the same two orbits on 4-sets of $N$. One such orbit gives a hypergraph isomorphic to $M_4$, and the other gives its complement. 

%Thus, it suffices to rule out the last case. So we suppose for a contradiction  that $C$ is a $G$-invariant dense $C$-relation on $X$, as in Theorem~\ref{2-reg}.

%Let $H=\Aut(X,C)$. The group $H$ has two orbits $\Omega_1$ and $\Omega_2$ on subsets of $X$ of size 4. Here $\Omega_1$ consists of 4-sets orderable as $(x_1,x_2,x_3,x_4)$ with $C(x_i;x_j,x_k)$ if and only if $i<j\wedge i<k$, and $\Omega_2$ consists of quadruples orderable as $(x_1,x_2,y_1,y_2)$ with exactly the relations $C(x_i,y_j,y_k)$ and $C(y_i;x_j,x_k)$ for $i,j,k\in \{1,2\}$. Without loss we may suppose that the edges of $(M,E)$ are the 4-sets in $\Omega_1$. Now consider the following two orbits $\Delta_1$ and $\Delta_2$ on 5-sets. These are drawn in Figures ***. Here, $\Delta_1$ consists of 5-sets orderable as $(x_1,x_2,y_1,y_2,y_3)$ such that the triples of distinct elements satisfying $C$ are $(x_i,y_j,y_k)$ and $(y_i,x_j,x_k)$ with $j\neq k$, and also 
%$(y_1,y_2,y_3)$ and $(y_1,y_3,y_2)$.  The 5-sets of $\Delta_2$ can be ordered as $(x,y_1,y_2,z_1,z_2)$  so that the triples of distinct elements satisfying $C$ are $(x,y_i,y_j), (x, z_i,z_j), (z_k,y_i,y_j), (y_k,z_i,z_j)$ where $\{i,j\}=\{1,2\}$ and $k\in \{1,2\}$. It can be checked that the 5-sets of $\Delta_1$ and $\Delta_2$ each carry two hypergraph edges, so give isomorphic hypergraphs, but are not in the same $\Aut(M,E)$-orbit, contrary to 5-set-homogeneity. 
\end{proof}

We consider a further 4-hypergraph. Let  $(N,D)$ be the countably infinite dense proper homogeneous 4-branching $D$-set (see Section 2.1). Define a 4-hypergraph $N_4=(N,E)$ from $(N,D)$, whose edges are 4-sets which satisfy $D$ under some ordering.

\begin{proposition}
The 4-hypergraph $N_4$ is ${\leq}5$-set-homogeneous but not 6-set-homogeneous.
\end{proposition}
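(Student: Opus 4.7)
The plan is to follow the template of Proposition~\ref{ex4hyp}: first show $\Aut(N_4) = \Aut(N, D)$ by recovering $D$ from $E$ via a first-order formula, then classify the $\Aut(N,D)$-orbits on $k$-subsets for $k \leq 5$ by Steiner tree topology and verify that each orbit is determined by its induced hypergraph, and finally exhibit two $\Aut(N,D)$-orbits on 6-subsets that induce isomorphic hypergraphs.

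To recover $D$ from $E$, I claim that for distinct $x, y, z, w \in N$, $D(x,y;z,w)$ holds if and only if $xyzw$ is an edge of $N_4$ and there exists $u \in N$ such that both $xyzu$ and $xywu$ are non-edges. For the forward direction, if $D(x,y;z,w)$ holds then (viewing elements as ends of the underlying tree) the Steiner tree of $\{x,y,z,w\}$ is an H-shape with branching nodes $p$ joining $\{x,y\}$ and $q$ joining $\{z,w\}$; by 4-branching and density I pick $u$ in the fourth cone at $p$, and then $\{x,y,z,u\}$ and $\{x,y,w,u\}$ each collapse to a star centred at $p$ and so are non-edges. For the reverse direction, the other two potential H-shape pairings for the edge $xyzw$ are ruled out by a symmetric analysis: under each such pairing, $xyzu \notin E$ forces $u$ to lie in a particular fourth cone, and that choice then forces $xywu$ to be an H-shape, contradicting $xywu \notin E$. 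This gives $\Aut(N_4) = \Aut(N, D)$.

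For $\leq 5$-set-homogeneity, I classify the Steiner tree topologies of small subsets in the 4-branching $D$-set. For $k \leq 3$ there is a unique orbit by homogeneity of $(N,D)$. For $k = 4$ there are exactly two orbits, the H-shape (edge) and star (non-edge), distinguished by their edge counts $1$ and $0$. For $k = 5$, since internal degrees lie in $\{3, 4\}$ and there are at most $k - 2 = 3$ internal nodes, the only topologies are (i) the \emph{caterpillar}, with three degree-3 internal nodes, in which every 4-subset is an H-shape and so the hypergraph has five edges, and (ii) the \emph{pair-plus-three-singletons} configuration, with one degree-4 and one degree-3 internal node, in which only the three 4-subsets containing the distinguished pair are H-shapes, so the hypergraph has three edges. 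The distinct edge counts separate the two orbits, so 4- and 5-set-homogeneity of $N_4$ follow from homogeneity of $(N, D)$ together with $\Aut(N_4) = \Aut(N, D)$.

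For the failure of 6-set-homogeneity, I exhibit two 6-subsets $A_1, A_2$ of $N$ whose Steiner trees are non-isomorphic yet which induce isomorphic hypergraphs. Let $A_1$ be a 6-subset whose internal subtree is a path $u_1$-$u_2$-$u_3$-$u_4$ of four degree-3 nodes, with two leaves at each of $u_1, u_4$ and one leaf at each of $u_2, u_3$, and let $A_2$ be a 6-subset whose internal subtree is the star $K_{1,3}$ with centre $u_0$ of degree 3 and three outer nodes $u_1, u_2, u_3$, each of degree 3 and carrying two leaves. Both configurations occur by density and 4-branching. In each case every internal node of the ambient Steiner tree has degree 3, so the Steiner tree of any 4-subset has internal degrees at most 3 and is therefore an H-shape; thus all fifteen 4-subsets of $A_1$ and of $A_2$ are edges, and $A_1, A_2$ carry isomorphic complete 4-hypergraphs on 6 vertices. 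On the other hand the 6-leaf caterpillar and spider are non-isomorphic as abstract trees (their automorphism groups have orders $8$ and $48$ respectively), so $(A_1, D) \not\cong (A_2, D)$ and $A_1, A_2$ lie in distinct $\Aut(N,D) = \Aut(N_4)$-orbits. This witnesses the failure of 6-set-homogeneity. The principal technical hurdle will be the careful case analysis for the reverse direction of the $D$-definability equivalence, on which the identification $\Aut(N_4) = \Aut(N, D)$ relies.
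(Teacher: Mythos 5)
Your proposal is correct and follows essentially the same route as the paper: recover $D$ from $E$ by a first-order formula using a fifth point $u$ witnessing exactly the non-edges $xyzu$ and $xywu$ (the paper's formula is the same up to requiring the remaining 4-subsets of $\{x,y,z,w,u\}$ to be edges, which is automatic), deduce $\Aut(N_4)=\Aut(N,D)$, get ${\leq}5$-set-homogeneity by inspecting the possible Steiner-tree types of 5-subsets, and refute 6-set-homogeneity with the same two complete 6-vertex subhypergraphs (the caterpillar and the spider, which are precisely the two non-isomorphic $D$-sets in the paper's Figure 9). You have merely written out in full the case analyses that the paper leaves as "it can be checked".
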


\begin{proof} First, we observe that the $D$-relation can be defined from the hypergraph structure. Given distinct $x,y,z,w\in N$, put $D(x,y;z,w)$ if and only if
 there is $ u\in N\setminus \{x,y,z,w\}$ such that $\{x,y,z,w,u\}$ has only the non-edges $xyuz, xyuw$.
It can be checked that this correctly recovers the $D$-relation from $E$, so $\Aut(N_4)=\Aut(N,D)$.

Homogeneity of $(N,D)$ and  inspection of possible 5-element substructures easily yields that $N_4$ is ${\leq}5$-set-homogeneous. It is not 6-set-homogeneous, as $N_4$ has 6-vertex complete subhypergraphs corresponding to the non-isomorphic  $D$-sets in Figure 9.
\begin{figure}[H]
	\centering
	\begin{tikzpicture}
	\draw (4,0)--(6,0);
	\draw (4,0)--(3,1);
	\draw (4,0)--(3,-1);
	\draw (6,0)--(7,1);
	\draw (6,0)--(7,-1);
	
	\draw (4.5,0)--(4.5,1);
	\draw (5.5,0)--(5.5,1);
	
	\draw (9,0)--(11,0);
	\draw (9,0)--(8,1);
	\draw (9,0)--(8,-1);
	\draw (11,0)--(12,1);
	\draw (11,0)--(12,-1);
	
	\draw (10,0)--(10,1);
	\draw (9.5,1.5)--(10,1)--(10.5,1.5);

	\end{tikzpicture}
	\caption{}
	
\end{figure}
\end{proof}

\subsection{Set-homogeneous 6-hypergraphs}

In this section we prove Theorem~\ref{6hyper}. 
The main point is the existence assertion (i), restated below. The example is defined from a $D$-set.

\begin{proposition}\label{6hyp}
There is a countably infinite set-homogeneous 6-hypergraph $M_6=(M,E)$ whose automorphism group $G$ is 3-transitive but not 3-primitive.
\end{proposition}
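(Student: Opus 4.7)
The plan is to mimic the construction of $M_4$ in Proposition~\ref{ex4hyp}, now starting from the countable dense proper $3$-branching $D$-set $(M,D)$ of Section~2.1 (homogeneous by the results cited there), and promoting the $M_4$ "two pairs at a binary node" edge to a symmetric "three pairs at a trivalent node" tripod of arity $6$. Specifically, I would declare a $6$-subset $\{x_1,x_2,y_1,y_2,z_1,z_2\}$ of $M$ to be an edge of $E$ when it admits a partition into three pairs such that the three pairs sit in distinct branches at a common trivalent node of the underlying $D$-tree, while the two elements within each pair lie in distinct sub-branches at a further node. This condition translates into the conjunction of $D$-relations $D(x_1,x_2;y_i,z_j)$, $D(y_1,y_2;x_i,z_j)$, and $D(z_1,z_2;x_i,y_j)$ for all $i,j\in\{1,2\}$, together with additional conditions ensuring the depth-$2$ refinement within each branch.

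Exactly as for $M_4$, the inclusion $\Aut(M,D)\le \Aut(M_6)$ is immediate. For the reverse inclusion I would exhibit a $\emptyset$-definition of $D$ in $(M,E)$: given a $4$-tuple $x,y,z,w$ with say $D(x,y;z,w)$, use the existence of $u,v$ completing $\{x,y,z,w,u,v\}$ to a canonical tripod edge whose pairing contains $\{x,y\}$ and $\{z,w\}$ as two of its three branches; alternatively, as in Proposition~\ref{ex4hyp}, one can appeal to the Cameron-style reconstruction results (e.g.\ \cite[Corollary 2.3]{bodirsky}). Thus $\Aut(M_6)=\Aut(M,D)$. The $3$-transitivity then follows because any $3$ distinct points of $M$ carry the trivial induced $D$-structure, so homogeneity of $(M,D)$ yields transitivity on ordered triples. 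For non-$3$-primitivity, any three distinct $a,b,c \in M$ determine a canonical trivalent node of the $D$-tree at which the directions to $a,b,c$ meet, and the partition of $M\setminus\{a,b,c\}$ according to "which of the three directions from this node" is a proper nontrivial equivalence relation invariant under $G_{abc}$.

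The core of the proof is set-homogeneity. Given an isomorphism $\sigma:(U,E)\to(V,E)$ between finite induced subhypergraphs of $M_6$, I would show by induction on $|U|$ that $(U,D)\cong (V,D)$; homogeneity of $(M,D)$ then produces $g\in \Aut(M,D)=\Aut(M_6)$ with $U^g=V$. Small cases are settled by inspection, using that $\Aut(M,D)$ has only finitely many orbits on small sets with each given edge-profile. For the inductive step I would adapt the machinery of Proposition~\ref{ex4hyp}: identify maximal $E$-null subhypergraphs of $U$ (those containing no edge) and observe that they correspond to canonical chains in the $D$-tree, then introduce $E$-definable analogues of the "high", "isolated", "low", and "special" predicates used in the $M_4$ argument, adjusted to trivalent rather than binary branching. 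Each such predicate is preserved by $\sigma$, and together they should yield a $\sigma$-respecting decomposition of $U$ into strictly smaller pieces, to which the induction hypothesis applies; splicing the resulting $D$-isomorphisms recovers $(U,D)\cong(V,D)$.

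The main obstacle I anticipate is the combinatorial bookkeeping in the last step. Passing from $2$ branches to $3$ branches at each node, and from edge-arity $4$ to $6$, produces more cases in the analysis of how the $D$-skeleton of $U$ is reflected in its induced hypergraph, and one must check that the auxiliary predicates really capture the intended pieces of the $D$-structure, and that every non-null $U$ admits a $\sigma$-invariant splitting of the desired form. Verifying these combinatorial details carefully, in the spirit of Proposition~\ref{ex4hyp}, is where the bulk of the work lies.
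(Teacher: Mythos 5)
Your overall skeleton matches the paper's: define the edge relation from the homogeneous 3-branching $D$-set so that one of the two $\Aut(M,D)$-orbits on 6-sets becomes the edge set, get $\Aut(M_6)=\Aut(M,D)$ from the reduct classification in \cite[Corollary 2.3]{bodirsky}, and then prove set-homogeneity by showing that a hypergraph isomorphism between finite sets forces a $D$-isomorphism. Your choice of edge (the ``tripod'' type) is the complement of the paper's (the ``caterpillar'' of Figure 10), which is harmless for the existence statement. However, there are two genuine problems. First, your non-3-primitivity argument is off by one: by the paper's definition, $3$-primitive means the stabiliser of \emph{two} points acts primitively on the complement, whereas you exhibit an imprimitivity system for $G_{abc}$, a three-point stabiliser. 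The correct witness is the $G_{ab}$-invariant equivalence relation $x\sim y \Leftrightarrow D(a,b;x,y)$ on $M\setminus\{a,b\}$ (``$x$ and $y$ branch off the path from $a$ to $b$ at the same place and in the same direction''), which is proper and nontrivial.

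Second, and more seriously, the heart of the proposition --- Claim 2 of the paper, that a hypergraph isomorphism $\sigma:(U,E)\to(V,E)$ forces $(U,D)\cong(V,D)$ --- is exactly the part you defer. Announcing that one will ``introduce analogues of the high/isolated/low/special predicates, adjusted to trivalent branching'' and that ``the bulk of the work lies'' in checking them is a plan, not a proof, and it is not the mechanism the paper uses. The paper takes a maximal complete (caterpillar) subhypergraph $A$ of $U$, uses the sets $T_i$ and the peripheral/central distinction to pin down a small subset $R_3$ canonically attached to one end of $A$, and then observes that $U\setminus R_3$ carries a \emph{canonical induced 4-hypergraph} definable from $E$ (via the $C$-relation $C_U(x;y,z)\Leftrightarrow D(a_1,x;y,z)$), to which the already-proved Proposition~\ref{ex4hyp} applies verbatim. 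That reduction to the arity-4 case is the key idea that makes the combinatorics tractable; without it, or without actually carrying out your proposed direct arity-6 case analysis, the set-homogeneity claim is unproved. As it stands the proposal establishes the easy inclusions and transitivity facts but leaves the central claim open.
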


\begin{proof} Our starting point is the group $J$ described in Theorem 5.1 of \cite{cameron4}. The language in that paper is different, but  $J$ is the automorphism group of the unique countable homogeneous $D$-set $(M,D)$ with branching number 3, as described in Section 2.1 above and in Section 32 of \cite{AN1}.  It is easily checked and noted in \cite[Theorem 5.1]{cameron4}) that $J$ is 5-homogeneous but has two orbits on the collection of subsets of size 6. We define a 6-hypergraph $M_6$ on $M$ in which the edges are sets whose induced $D$-structure has the isomorphism type in Figure 10. Let $E$ denote the resulting 6-ary edge-relation on $M$. Clearly $J=\Aut(M,D)\leq \Aut(M_6)$.

\begin{figure}[H]
	\centering
	\begin{tikzpicture}
	\draw (4,0)--(6,0);
	\draw (4,0)--(3,1)node[above]{$x_{2}$};
	\draw (4,0)--(3,-1)node[below]{$x_{1}$};
	\draw (6,0)--(7,1)node[above]{$x_{5}$};
	\draw (6,0)--(7,-1)node[below]{$x_{6}$};
	
	\draw (4.5,0)--(4.5,1)node[above]{$x_{3}$};
	\draw (5.5,0)--(5.5,1)node[above]{$x_{4}$};
	
	\end{tikzpicture}
	\caption{}
	
\end{figure}

{\em Claim 1.}  $\Aut(M_6)=J$.

{\em Proof of Claim.} We must define $D$ from $E$.
It follows from \cite[Corollary 2.3]{bodirsky} that the structure $(M,D)$ has no first-order reducts (up to interdefinability over $\emptyset$) other than itself and $(M,=)$; that is, any structure on $M$ which is $\emptyset$-definable in $(M,D)$ is interdefinable over $\emptyset$ with $(M,D)$ or $(M,=)$. Since $(M,E)$ is $\emptyset$-definable in $(M,D)$ and is not a complete or null hypergraph, it is interderdefinable with $(M,D)$, giving the claim. (Formally, \cite[Corollary 2.3]{bodirsky} describes the reducts of the structure
$(M,C)$ from Section 4.2 above, and shows that the only proper non-trivial reduct is $(M,D)$, which is in \cite{bodirsky} denoted $(\mathbb{L},Q)$).

\medskip

To show that $M_6$ is set-homogeneous, using the homogeneity of $(M,D)$ it suffices to prove the following claim.

{\em Claim 2.} If $U,V\subset M$ are finite and $\sigma:(U,E) \to (V,E)$ is an isomorphism of the induced subhypergraphs, then the structures $(U,D)$ and $(V,D)$ are isomorphic. 

{\em Proof of Claim.} Let $A$ be a complete subhypergraph of $U$ of maximal size, and put $B=\sigma(A)$. It is easily seen that the $D$-structure on $A=\{a_1,\ldots,a_{n+2}\}$ has the form depicted in Figure 11. Likewise $B=\{b_1,\ldots,b_{n+2}\}$ carries a $D$-structure as depicted. We do {\em not} claim that $\sigma(a_i)=b_i$ for each $i$. 

For each $i=3,\ldots,n$, let $T_i=\{e: a_1a_2ea_ia_{n+1}a_{n+2} \mbox{~is a non-edge}\}$. Clearly $T_i$ is as depicted in Fig. 11. By maximality of $A$, we have $U\setminus A=\bigcup(T_i:3\leq i\leq n)$, and maximality of $|A|$ yields that $|T_3|\leq 1$ and $|T_n|\leq 1$.

\begin{figure}[H]
	\centering
	\begin{tikzpicture}
	\draw (4,0)--(8,0);
	\draw (4,0)--(3,1)node[above]{$a_{1}$};
	\draw (4,0)--(3,-1)node[below]{$a_{2}$};
	\draw (8,0)--(9,1)node[above]{$a_{n+1}$};
	\draw (8,0)--(9,-1)node[below]{$a_{n+2}$};
	
	\draw (4.5,0)--(4.5,1)node[above]{$a_{3}$};
	\draw (5.3,0)--(5.3,1);
	
	\draw (6,0)--(6,2.7)node[above]{$a_{i}$};
	\draw (7.5,0)--(7.5,1)node[above]{$a_{n}$};
	\draw (6.7,0)--(6.7,1);
	
	\draw (6,1.2)--(5.5,1.6);
	%\draw (6,2)--(5.6,2.5);
	%\draw (5.3,2.3)node[above]{$a_{i}$};
	\draw (6,2)--(6.3,2.6);
	\draw (6.15,2.3)--(6.5,2.5);
	
	\draw [decorate,decoration={brace,amplitude=5pt,mirror,raise=2pt},yshift=2pt]
	(6.5,3) -- (5.6,3) node [black,midway,xshift=0cm, yshift=.8cm] {\footnotesize $T_{i}\cup\{a_{i}\}$};
	
	\end{tikzpicture}
	\caption{}
	
\end{figure}

For each $e\in U\setminus A$, write $e\sim a_i$ if $(A\setminus \{a_i\})\cup \{e\}$ carries a complete hypergraph. It can be checked that  if $e\in T_3$ then $e\sim a_i$ for each $i\in \{1,2,3\}$ and if $e\in T_n$ then $e\sim a_i$ for $i\in \{n,n+1,n+2\}$ but if $e\in T_i$ where $4\leq i \leq n-1$ then $\{j: e\sim a_j\}=\{i\}$. We say that $e\in U\setminus A$ is {\em peripheral} if $|\{i: e \sim a_i\}|=3$, and that $e$ is {\em central} otherwise. We define $T_i'$ correspondingly in $V$ (with each $b_i$ replacing $a_i$) and {\em central} and {\em peripheral} in the same way. Let $P, P'$ be the sets of peripheral vertices of $U,V$ respectively. Clearly $|P|\leq 2$ and $\sigma(P)=P'$, and $\sigma$ induces some permutation $\pi$ of $\{4,\ldots,n-1\}$ such that if $e$ is central in $U$ then $e\in T_i$ if and only if $\sigma(e)\in T_{\pi(i)}'$. 

%Define $\equiv$ on $P$, putting $e\equiv e'$ if and only if $\{a_i:e\sim a_i\}=\{a_i:e'\sim a_i\}$, and define $\equiv'$ similarly on $P'$. Then $\equiv$ and $\equiv'$ are equivalence relations with at most two classes, and $\sigma$ maps $\equiv$-classes of $P$ onto $\equiv'$-classes of $P'$. The $\equiv$-classes are $T_3$ and $T_n$, and those of $\equiv'$ are $T_3'$ and $T_n'$. 
Relabelling the $b_i$ if necessary (via a map $i\mapsto n+2-i$) we may suppose that $\sigma(T_3)=T_3'$. Let $R_3=T_3\cup\{a_1,a_2,a_3\}$ and $R_3'=T_3'\cup\{b_1,b_2,b_3\}$. 
Then $|R_3|=|R_3'|\leq 4$.

%Observe that since $A$ is complete of maximal size, the sets $T_3, T_n, T_3', T_n'$ all have size at most 4, and that the natural induced weak $C$-set structure on each of these sets is determined up to isomorphism by its cardinality (here, for example, we define $C$ on $T_3$  by putting $C(x;y,z)$ if and only if  $D(w,x;y,z)$) holds  for every $w\in U\setminus T_3$). There is a natural induced weak $C$-set structure on $R_3$ and $R_3'$ (with the `downwards' direction being that towards $a_{n+2}$ and $b_{n+2}$), and these weak $C$-sets are isomorphic. 

Consider now the  $C$-relation $C_U$ induced on $U\setminus R_3$ with downwards direction towards $a_1$; for $x,y,z\in U\setminus R_3$ we have $C_U(x;y,z)\Leftrightarrow D(a_1,x;y,z)$. Similarly let $C_V$ be the  $C$-relation  induced on $V\setminus R_3'$ with downwards direction towards $b_1$. Observe that for any distinct $u_1,u_2,u_3,u_4\in U\setminus R_3$, and any distinct $x,y\in R_3$ and distinct $x',y'\in R_3$, $xyu_1u_2u_3u_4$   is a hypergraph edge if and only if $x'y'u_1u_2u_3u_4$ is a hypergraph edge. Thus, a canonical 4-hypergraph structure is induced on $U\setminus R_3$ and its complement is derived from the relation $C_U$ on this set as in Proposition~\ref{ex4hyp}. Likewise, a canonical 4-hypergraph is induced on $V\setminus R_3'$, and is derived from $C_V$. Since $\sigma$ induces an isomorphism of the 4-hypergraph on $U\setminus R_3$ onto that on $V\setminus R_3'$, it follows by the proof of Proposition~\ref{ex4hyp} that the corresponding structures $(U\setminus R_3, C_U)$ and $(V\setminus R_3', C_V)$ are isomorphic, and hence that $U$ and $V$ carry isomorphic $D$-substructures, yielding Claim 2 and hence the result. 
\end{proof}

\begin{corollary}\label{6hypsep}  Let $(N,E)$ be a set-homogeneous 6-hypergraph whose automorphism group is not 3-primitive. Then either $(N,E)$ or its complement is isomorphic to the structure $M_6$ from Proposition~\ref{6hyp}, or $\Aut(N,E)$ preserves a separation relation on $N$.
\end{corollary}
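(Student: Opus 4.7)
The plan is to adapt the approach of Corollary~\ref{linbet}, splitting on whether $K=\Aut(N,E)$ is 3-transitive on $N$ and invoking the Cameron-style results of Theorem~\ref{4hom}. Since $(N,E)$ is set-homogeneous and every induced 6-hypergraph on fewer than six vertices has no edges, $K$ is $t$-homogeneous on $N$ in the permutation-group sense for each $t\leq 5$; in particular $K$ is 5-homogeneous. One may assume that $(N,E)$ is non-trivial (has both edges and non-edges), since otherwise $K=\Sym(N)$ would be $k$-primitive for every $k$.

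First suppose $K$ is 3-transitive. As $K$ is not 3-primitive, Theorem~\ref{4hom}(iv) applies and gives that $K$ preserves on $N$ either a separation relation (which yields the conclusion) or a structure elementarily equivalent to the universal homogeneous 3-branching $D$-set. In the latter case one obtains a $K$-invariant $D$-relation $D$ and $K\leq J:=\Aut(N,D)$. By Proposition~\ref{6hyp}, $J$ has exactly two orbits on 6-subsets of $N$. Since $K$ also has exactly two orbits on 6-subsets (edges and non-edges, both non-empty), and each $K$-orbit is contained in a $J$-orbit, the two orbit sets must coincide. Therefore $J$ preserves $E$, so $J\leq K$ and $K=J$; the edge set $E$ is then one of the two $J$-orbits, giving $(N,E)\cong M_6$ or $M_6^c$.

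Now suppose $K$ is not 3-transitive. Since $K$ is 4-homogeneous, Theorem~\ref{4hom}(iii) shows that $K$ preserves on $N$ a circular order or a linear betweenness relation. In the circular order case $K$ preserves the induced separation relation, and the corollary is established. The hard part will be to eliminate the residual possibility that $K$ preserves a linear betweenness relation $B$ but no circular order. The idea is that a closed oligomorphic 5-homogeneous subgroup of $\Aut(N,B)$ must coincide with $\Aut(N,B)$ itself or with its index-2 order-preserving subgroup, since a dense linear order without endpoints is unique on any finite set up to reversal and both of these groups act transitively on $t$-subsets of $N$ for every $t$. This would force $K$ to have a single orbit on 6-subsets, contradicting the non-triviality of $(N,E)$. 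A complementary, more combinatorial route is to adapt the $P^i_J$/Lemma~\ref{5verts} analysis of Section~3.2 to 7-subsets of $(N,E)$, using the $K$-invariant linear order underlying $B$ to enumerate each 7-subset canonically and tabulating which omitted vertex can produce an edge, so as to derive an incompatibility with set-homogeneity analogous to Lemma~\ref{twocases} and Table~2. Either route rules out the linear betweenness case and completes the proof.
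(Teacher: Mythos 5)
Your treatment of the 3-transitive case is correct and is essentially the paper's argument: Theorem~\ref{4hom}(iv) gives either a separation relation or a $K$-invariant 3-branching $D$-relation, and in the latter case the orbit count (two $K$-orbits on 6-subsets inside two $\Aut(N,D)$-orbits) identifies $E$ with one of the two orbits, yielding $M_6$ or its complement. Your extra step showing $K=\Aut(N,D)$ is more than the paper records but is sound.

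The gap is in the non-3-transitive case. You treat ``$K$ preserves a linear betweenness relation but no circular order'' as a residual possibility that must be \emph{eliminated}, and you only sketch two routes, neither of which is carried out. The first rests on the unjustified assertion that a closed oligomorphic 5-homogeneous subgroup of $\Aut(N,B)$ must contain the order-preserving subgroup; 5-homogeneity gives transitivity on $t$-subsets only for $t\leq 5$, and nothing you say forces transitivity on 6-subsets. The second route is an unexecuted research programme. But no elimination is needed, and the case may genuinely occur (the paper explicitly leaves its analysis as an open problem after this corollary). The point you are missing is that a linear betweenness relation determines the underlying linear order only up to reversal, and the separation relation derived (via the circular order) from a linear order coincides with the one derived from its reverse; hence the separation relation is invariant under \emph{every} automorphism of the betweenness relation. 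So $K\leq\Aut(N,B)$ already implies that $K$ preserves a separation relation on $N$, which is one of the permitted conclusions of the corollary. This is exactly how the paper closes both branches of Theorem~\ref{4hom}(iii): ``\ldots and hence preserves a separation relation on $N$, as the latter is definable without parameters in a circular order or linear betweenness relation.'' With that observation inserted, your proof is complete; without it, it is not.
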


\begin{proof}
Let $K=\Aut(N,E)$. Then $K$ is a 5-homogeneous permutation group which is not 3-primitive. If $K$ is not 3-transitive on $N$, then by Theorem~\ref{4hom}(iii) $K$ preserves  a circular order or linear betweenness relation, and hence preserves a separation relation on $N$, as the latter is definable without parameters in a circular order or linear betweenness relation. So we may suppose that $K$ is 3-transitive. It follows by Theorem~\ref{4hom}(iv) that, assuming $K$ does not preserve a separation relation on $N$, then there is a $K$-invariant $D$-relation $D$ on $N$ such that $(N,D)$ is isomorphic to the 3-branching $D$-set $(M,D)$ from the proof of Proposition~\ref{6hyp}. Since $\Aut(N,D)$ has two orbits on 6-sets,  the result follows (one orbit gives a hypergraph isomorphic to $M_6$, and the other gives the complement).
\end{proof}

{\em Proof of Theorem~\ref{6hyper}.} See  Proposition~\ref{6hyp} and Corollary~\ref{6hypsep}. \hfill $\Box$

For completeness we also record the following immediate corollary of Theorem~\ref{4hom}(iii), which may make applicable methods similar to those of Lemma~\ref{PkJ}. Part (v) of Theorem~\ref{4hom}  also has consequences for set-homogeneous $k$-hypergraphs for $k\geq 9$. 
\begin{corollary}
 Let $k\geq 5$ and let $N$ be an infinite set homogeneous $k$-hypergraph whose automorphism group is not 3-transitive. Then there is an $\emptyset$-definable (so $\Aut(N,E)$-invariant)  linear betweenness relation, or circular order on $N$.

\end{corollary}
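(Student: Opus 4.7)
The plan is to reduce the corollary directly to Theorem~\ref{4hom}(iii) together with $\omega$-categoricity. The key observation is that for $k\geq 5$, every 4-element subset $U$ of a $k$-hypergraph $N$ carries the trivial empty substructure (no $k$-edge can be supported on a set of size $<k$). Hence any two 4-subsets of $N$ carry isomorphic induced substructures, and 4-set-homogeneity of $N$ therefore translates to $G:=\Aut(N,E)$ being transitive on unordered 4-subsets of $N$, i.e. $G$ is 4-homogeneous as a permutation group.

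Given this, I would simply invoke Theorem~\ref{4hom}(iii): since $G$ is 4-homogeneous but by hypothesis not 3-transitive, $G$ preserves on $N$ either a circular order or a linear betweenness relation. This gives the $G$-invariance part of the conclusion.

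For the $\emptyset$-definability assertion, I would appeal to $\omega$-categoricity. As noted in Section 1.3, a countably infinite set-homogeneous structure over a finite relational language has oligomorphic automorphism group and so is $\omega$-categorical. By the Ryll-Nardzewski theorem, every relation on $N$ invariant under $\Aut(N,E)$ is therefore $\emptyset$-definable in $(N,E)$. Applying this to the $G$-invariant circular order or linear betweenness relation produced in the previous step yields the required $\emptyset$-definability.

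The only step requiring any thought is the initial reduction — making explicit that for $k\geq 5$ the hypergraph language is ``empty on 4-sets'' so that set-homogeneity at level 4 collapses to plain 4-homogeneity of the group. Once that is pointed out, the rest is a direct citation of Theorem~\ref{4hom}(iii) and Ryll-Nardzewski. There is no genuine obstacle; the corollary is essentially a bookkeeping consequence.
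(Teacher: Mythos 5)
Your proposal is correct and is essentially the paper's own argument: the authors state the result as an ``immediate corollary of Theorem~\ref{4hom}(iii)'', and the content you supply --- that for $k\geq 5$ all $4$-subsets carry isomorphic (empty) induced substructures, so $4$-set-homogeneity collapses to $4$-homogeneity of the permutation group, after which Cameron's theorem and Ryll-Nardzewski (via the $\omega$-categoricity noted in Section 1.3) finish the job --- is exactly the intended filling-in of that remark. No gaps.
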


\section{Finite set-homogeneous structures, further questions}

The literature on set-homogeneity is not well-developed. In this section explore briefly what can be said about finite set-homogeneous hypergraphs, and then pose some tentative conjectures and questions, concerning how far the notion extends beyond homogeneity. 

\subsection{The finite case}

The finite homogeneous 3-hypergraphs were classified by Lachlan and Tripp in  \cite{lachlan}. There are just  four examples, with automorphism groups PGL${}_3(2)$, PGL${}_3(3)$, PSL${}_2(5)$, and the extension of PSL${}_2(9)$ by an involutory field automorphism, each in the natural action on the projective plane or line. By homogeneity, the automorphism group of any homogeneous 3-hypergraph is 2-transitive, and via the classification of finite simple groups all finite 2-transitive groups are known, making the Lachlan-Tripp result feasible. We have not attempted to carry out the corresponding classification under set-homogeneity,  but first note that such hypergraphs again have 2-transitive automorphism group by the following result. Finite $k$-homogeneous groups which are not $k$-transitive (for some $k\geq 2$) were classified by Kantor in \cite{kantor}, but this is not needed for the next result.

\begin{lemma}\label{2trans}
Let $k\geq 3$, let $\Gamma=(X,E)$ be a finite set-homogeneous $k$-hypergraph, and put $G=\Aut(\Gamma)$. Then $G$ acts 2-transitively on $X$.
\end{lemma}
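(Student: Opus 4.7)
The plan is to argue by contradiction. Suppose $G=\Aut(\Gamma)$ is not 2-transitive on $X$. Because $k\ge 3$, every pair of vertices carries only the trivial induced substructure, so 2-set-homogeneity of $\Gamma$ upgrades to 2-homogeneity of $G$ on $X$. A 2-homogeneous but not 2-transitive group has exactly two orbits on ordered pairs and therefore preserves a tournament $\to$ on $X$; $G$-transitivity on $X$ and on arcs then forces $|X|$ to be odd, with $|v^+|=|v^-|=(|X|-1)/2$ for every $v$.

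For $k\ge 4$ no 3-subset is a hypergraph edge, so all 3-subsets of $X$ are isomorphic as $k$-hypergraphs and 3-set-homogeneity promotes to 3-homogeneity of $G$. Invariance of $\to$ then forces every 3-subset to realise the same tournament type, which I would rule out in the usual way: the ``all cyclic'' option fails once $|X|\ge 4$ because a vertex of out-degree at least two produces a transitive triangle, while the ``all transitive'' option makes $\to$ a linear order, which is rigid and forces $|X|=1$ by vertex-transitivity. The small cases $|X|\le 3$ give $\Aut(\Gamma)=S_{|X|}$, which is 2-transitive, contradicting the assumption.

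For $k=3$ the two $G$-orbits on 3-subsets (edges and non-edges) each lie in a single tournament isomorphism class by $\to$-invariance; after passing to the hypergraph complement if needed I may assume the edges are exactly the cyclic triangles and the non-edges the transitive triangles (if one class is empty then $\Aut(\Gamma)=S_{|X|}$, contradiction). A four-vertex tournament has $0$, $1$, $1$ or $2$ cyclic triangles, and 4-set-homogeneity combined with $\to$-invariance forces all 1-edge 4-subsets to carry the same tournament type. Reversing $\to$ if necessary I assume this type is Type 2 (a vertex dominating a $C_3$), so no $v$ has a $C_3$ in $v^-$ and hence each $v^-$ is linearly ordered by $\to$.

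The argument then splits according to whether any 1-edge 4-subset exists. If one does, then some $x^+$ contains a $C_3$; arc-transitivity promotes this to every $x^+$ containing a $C_3$, and Lemma~\ref{7vertices} produces a $C_3$ in every $x^-$, contradicting the linearity of $v^-$. If no 1-edge 4-subset exists, then no $v$ dominates or is dominated by any $C_3$, so each $v^+$ and each $v^-$ is $C_3$-free and thus linearly ordered by $\to$, making $(X,\to)$ a local order; then arc-transitivity gives $G_v$ transitive on $v^+$ while $G_v$ must also preserve the induced linear order on $v^+$, forcing $|v^+|=1$ and $|X|=3$, a case already seen to be 2-transitive. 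The main obstacle, and the part requiring the most care, is the $k=3$ case — specifically, the final squeeze that pairs Lemma~\ref{7vertices} with the rigidity of an order-preserving transitive action in order to eliminate the local-order subcase.
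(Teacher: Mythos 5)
Your proof is correct and follows essentially the same route as the paper: for $k=3$ you reproduce the argument of Lemma~\ref{linear} (the reduction to a $G$-invariant tournament, the single-isomorphism-type constraint on one-edge $4$-sets coming from Lemma~\ref{countedges}, the appeal to Lemma~\ref{7vertices}, and the rigidity squeeze in the local-order case), while for $k\geq 4$ you re-derive by hand the finite analogue of Theorem~\ref{4hom}(i), which the paper simply cites. The paper's own proof consists of exactly these two citations plus the observation that a finite linear order is rigid, so your version is the same mathematics unpacked; the only point worth tightening is the passage to ``edges are exactly the cyclic triangles'', where the degenerate possibility that edges and non-edges realise the \emph{same} tournament type should be dismissed explicitly (it falls to the same rigidity and $|X|\leq 3$ observations you already use).
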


 \begin{proof} This follows immediately from Lemma~\ref{linear} for $k=3$, and from Theorem~\ref{4hom}(i) (and the rigidity of finite linear orders) for larger $k$. (As stated Theorem~\ref{4hom}(i) has an assumption that $X$ is infinite, but the proof in \cite{cameron2} only uses this to ensure that $G$ is 2-homogeneous, which holds here anyway.)
\end{proof}

Enomoto \cite{enomoto} gave a very short proof that any finite set-homogeneous graph is homogenous. As noted in \cite[Lemma 3.1]{gray+}, Enomoto's argument works also for finite tournaments, but not for finite digraphs (a directed 5-cycle is set-homogeneous but not homogeneous). The following example shows that his argument (at least in the original form) is not applicable to 3-hypergraphs. 

\begin{example} \label{finiteex}\rm
We give an example of a finite set-homogeneous 3-hypergraph which is not homogeneous. Let $G$ be the group ${\rm AGL}_1(7)=(\mathbb{F}_7,+)\rtimes (\mathbb{F}^*,\cdot)$ acting on $\mathbb{F}_7=\{0,1,2,3,4,5,6\}$ with operations modulo 7. This group is 2-transitive and it can be checked with bare hands that it has 2 orbits on sets of size 3, namely an orbit $\Omega_1$ of size 14 containing the 3-sets
$$013,026,023,045,015,046,124,235,346,156,134,245,356,126$$
 (listing 3-sets as triples), and an orbit $\Omega_2$ of size 21 containing the remaining 3-sets. Let $M$ by the 3-hypergraph on vertex set $\mathbb{F}_7$ whose edges are the 3-sets in $\Omega_1$. The group $G$ also has two orbits on sets of size 4, namely $\Theta_1$ consisting of complements of sets in $\Omega_1$, and 
$\Theta_2$ containing the complements of elements of $\Omega_2$.
Observe that $\{2,4,5,6\}\in \Theta_1$, and $\{3,4,5,6\}\in \Theta_2$. The induced hypergraph on $\{3,4,5,6\}$ contains two edges ($346$ and $356$), and that on $\{2,4,5,6\}$ has 1 edge ($245$), so they are non-isomorphic.  Since $G$ is transitive on 5-sets and 6-sets from $\{0,1,\ldots,6\}$, it  follows that $G$ acts set-homogeneously on $M$.

The hypergraph $M$ is not homogeneous, since it does not occur among the examples in \cite{lachlan}. Indeed, the only 7-vertex example in \cite{lachlan} is the Fano plane, which has ${\rm PSL}_3(2)$ as automorphism group. Since the latter is simple of order 168, it cannot have $G$ as a subgroup -- indeed, $|G|=42$, and a group of order 168 with a subgroup of order 42 must have a proper normal subgroup of index at most 4!, so cannot be simple.
\end{example}

{\em Proof of Theorem D.} See Lemma~\ref{2trans}  and Example~\ref{finiteex}. \hfill $\Box$

\medskip

Lachlan (see \cite{lachlan-old} and also \cite{cherlach} for  the existence of a bound on rank) developed a very general structure theory for finite homogeneous relational structures. First, recall that a countably infinite structure $M$ is {\em smoothly approximable} by a sequence $M_0\leq M_1 \leq\ldots$ of finite substructures if $M$ is $\omega$-categorical, and (with $G=\Aut(M)$), for any $i\in \mathbb{N}$ and tuples $\bar{u},\bar{v}$ from $M_i$, $\bar{u}$ and $\bar{v}$ lie in the same $\Aut(M)$-orbit if and only if they lie in the same orbit of the setwise stabiliser $\Aut(M)_{\{M_i\}}$ of $M_i$. A rich theory around smooth approximation is developed in \cite{ch}. 

Roughly, the Lachlan theory says that if $L$ is a finite relational language, then the finite homogeneous $L$-structures consist of finitely many `sporadic' examples, and finitely many infinite families of examples, so that within each family the isomorphism type is determined by  finitely many `dimensions' taking values in $\mathbb{N}$, the dimensions varying independently and freely above a certain minimum. The infinite `limits' of these families are exactly the  homogeneous countably-infinite $L$-structures which are {\em stable} (see Section 5.2); they are `smoothly approximated' by the finite families. It can be shown that something very similar holds under set-homogeneity, and we make this precise below. 

\begin{theorem}\label{lachlanfinite}
Let $L$ be a finite relational language and let $\mathcal{C}$ be the collection of all finite set-homogeneous $L$-structures. Then we may write $\mathcal{C}= \mathcal{F}_0 \cup \mathcal{F}_1 \cup \ldots \cup\mathcal{F}_t$ where $\mathcal{F}_0$ is finite, and the structures in each $\mathcal{F}_i$ (for $i=1,\ldots,t$) smoothly approximate a set-homogeneous countably infinite $L$-structure $M_i$.
\end{theorem}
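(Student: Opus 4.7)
The plan is to reduce Theorem~\ref{lachlanfinite} to the classical Lachlan--Cherlin structure theorem for finite homogeneous structures via a uniform Morleyization, using Pouzet's uniform prehomogeneity result from Section 1.3 as the essential input. The four main steps are: upgrade Pouzet's bound to one depending only on $L$; encode the finitely many automorphism orbits on short tuples into a uniformly chosen expansion; apply the classical theorem in the expanded language; and reduce back to $\mathcal{C}$.

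First I would establish a language-uniform Pouzet bound: a function $f_L : \mathbb{N} \to \mathbb{N}$ such that every $M \in \mathcal{C}$ is uniformly prehomogeneous with witness $f_L$, meaning that for every $A \le M$ there exists $B$ with $A \le B \le M$, $|B| \le f_L(|A|)$, such that any partial isomorphism from $A$ which extends to $B$ extends to an automorphism of $M$. Pouzet's theorem provides such a bound for each $M$ individually; uniformity follows by a standard compactness argument. If no uniform $f_L$ existed, a sequence of counterexamples would yield, by ultraproduct along a non-principal ultrafilter, a countably infinite $L$-structure $M_*$ which is still set-homogeneous by \L{o}\'s's theorem and $\omega$-categorical (since a finite relational $L$ admits only boundedly many isomorphism types of $k$-element substructures), and yet violates Pouzet's theorem applied to $M_*$ itself, a contradiction.

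Second, using $f_L$, I would perform a uniform Morleyization. Fix a threshold $N_0$; for $M \in \mathcal{C}$ of size at least $N_0$, the number of $\Aut(M)$-orbits on tuples of length $\le f_L(N_0)$ is bounded by a constant $m^* = m^*(L)$ depending only on $L$, by combining $f_L$ with the finiteness of $L$-isomorphism types of substructures of that size. I would then expand $M$ to $M^+$ in a language $L^+$ obtained from $L$ by adjoining $m^*$ fresh relation symbols, one per $\Aut(M)$-orbit on short tuples. The expanded $M^+$ is a finite homogeneous $L^+$-structure with $\Aut(M) = \Aut(M^+)$, and up to relabeling the new symbols, $L^+$ depends only on $L$.

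Third, I would apply the Lachlan--Cherlin structure theorem (see \cite{lachlan-old,cherlach}) to the class $\mathcal{C}^+ = \{M^+ : M \in \mathcal{C}, |M| \ge N_0\}$ of finite homogeneous $L^+$-structures. That theorem decomposes $\mathcal{C}^+$ as a finite sporadic collection together with finitely many families, each family smoothly approximating a countably infinite stable homogeneous $L^+$-structure. Taking $L$-reducts of the limits yields countably infinite set-homogeneous $L$-structures $M_1, \ldots, M_t$ (set-homogeneity of the limit follows from the twisted amalgamation property of Section 1.3 together with the fact that the families are cofinal in the age), and the corresponding partitions of $\mathcal{C}^+$ descend to the desired partition $\mathcal{C} = \mathcal{F}_0 \cup \mathcal{F}_1 \cup \ldots \cup \mathcal{F}_t$ with $\mathcal{F}_0$ absorbing the structures of size less than $N_0$ plus the sporadic residue. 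The hard part will be the uniform Morleyization in step two: one must verify that $m^*(L)$ really is finite and that a single $L^+$ works for all $M \in \mathcal{C}$ simultaneously, which hinges delicately on the uniform Pouzet bound $f_L$ and on checking that the new relation symbols in $M^+$ remain definable in a uniform way across the class, so that the $L$-reducts of the CHL limits are genuinely set-homogeneous $L$-structures rather than merely reducts of homogeneous $L^+$-structures.
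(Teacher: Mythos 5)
Your route is genuinely different from the paper's, and it has a gap at exactly the point you flag as ``the hard part''. The paper's proof is a two-line reduction: set-homogeneity together with the finiteness of $L$ immediately bounds the number of $\Aut(M)$-orbits on $M^4$ by a constant $d=d(L)$ uniformly over $\mathcal{C}$ (orbits on ${\leq}4$-subsets are in bijection with isomorphism types of ${\leq}4$-element $L$-structures, and orbits on ordered $4$-tuples are bounded in terms of these), and then the conclusion is quoted from Theorem 4.4.1 of \cite{wolf}, which rests on the Cherlin--Hrushovski theory of finite structures with few types \cite{ch}. Note that no Pouzet-style prehomogeneity bound is needed for the orbit count: the bound on orbits of tuples of any fixed length is immediate from set-homogeneity alone.

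The gap in your argument is Step 2. Adjoining relation symbols for the $\Aut(M)$-orbits on tuples of length at most $r:=f_L(N_0)$ yields a structure $M^+$ in which isomorphisms between substructures of size at most $r$ extend to automorphisms, but for substructures of size greater than $r$ an $L^+$-isomorphism only certifies that all length-$r$ subtuples lie in corresponding orbits; nothing forces the full tuples to be conjugate. So $M^+$ need not be homogeneous, and you cannot yet invoke the Lachlan--Cherlin classification of finite homogeneous structures over a finite relational language. The uniform Pouzet bound does not repair this: $f_L(|A|)$ grows with $|A|$, so a genuine Morleyization along those lines would require relations of unbounded arity, i.e.\ an infinite language, which is outside the scope of \cite{lachlan-old,cherlach}. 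The assertion that bounded-arity orbit data controls orbits on tuples of all lengths, uniformly across such a class, is precisely the deep content of the Cherlin--Hrushovski machinery (Lie coordinatization and the ``bounded $4$-types'' hypothesis) that the paper imports wholesale via \cite{wolf}; it is not a formal consequence of prehomogeneity. Separately, your compactness argument in Step 1 is not sound as stated: set-homogeneity and uniform prehomogeneity are assertions about automorphisms, not first-order properties, so \L{o}\'{s}'s theorem does not directly preserve them, and the ultraproduct is uncountable; this can be patched (automorphisms of the factors induce automorphisms of the ultraproduct, $\aleph_1$-saturation plus finitely many realized types gives $\omega$-categoricity, and one passes to the countable model), but the patch needs to be written out and in any case the resulting bound is not what your Step 2 requires.
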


\begin{proof} As $L$ is fixed, set-homogeneity ensures that there is $d\in \mathbb{N}$ such that for each $M \in \mathcal{C}$, $\Aut(M)$ has at most $d$ orbits on $M^4$. The result now follows from Theorem 4.4.1 of \cite{wolf}, which is based on results  from \cite{ch}. It is almost immediate from the definition of smooth approximation that if a family $\mathcal{F}_i$ smoothly approximates an infinite structure $M_i$ then $M_i$ will itself be set-homogeneous.
\end{proof}

\subsection{Infinite set-homogeneous structures}

It seems feasible to classify set-homogeneous hypergraphs which are not $t$-homogeneous for some small $t$. 

\begin{problem} For $k\geq 3$, classify set-homogeneous countably infinite $(k+1)$-hypergraphs whose automorphism group is not $k$-transitive. In particular, classify set-homogeneous 4-hypergraphs whose automorphism group is not 2-primitive (see Corollary~\ref{linbet}), and set-homogeneous 6-hypergraphs whose automorphism group is not 3-primitive  (see Corollary~\ref{6hypsep}).

\end{problem}
In  the  case of Corollary~\ref{linbet} we must consider cases where there is an underlying invariant linear betweenness relation, and for Corollary~\ref{6hypsep} one considers an invariant separation relation. It seems likely that the methods of Lemmas~\ref{5verts} and \ref{PkJ} are applicable. 
As a special case of the above problem, again with these lemmas as an approach, we ask for the following extensions of Theorems ~\ref{3hyper}(i) (the uniqueness assertion) and \ref{4hyper}(i).
\begin{problem}
Show that for $k\geq 3$ the only set-homogeneous $k$-hypergraph whose automorphism group is not 2-transitive is $M_3$. Show also that $M_3, N_3$ and $N_3^c$ are the only set-homogeneous 3-hypergraphs which are not 3-homogeneous (that is, do not have $S_3$ induced on both edges and non-edges). 
\end{problem}

\begin{remark}\label{reduct} \rm
If $M,N$ are $\omega$-categorical structures with the same domain, then $N$ is said to be a (first-order) {\em reduct} of $M$ if $\Aut(N)\geq\Aut(M)$ (as subgroups of $\Sym(M)$), and to be a {\em proper} reduct if $\Aut(N)>\Aut(M)$. Thus, by the Ryll-Nardzewski Theorem,  $N$ is a  reduct of $M$ if and only if every $\emptyset$-definable relation of $N$ is $\emptyset$-definable in $M$. Observe that $M_4$ is a proper reduct of $M_3$ and $M_6$ is a proper reduct of $M_4$. Indeed, if $C$ is the dense 3-branching $C$-relation on $M_3$ and $D$ is the corresponding 3-branching $D$-relation as described in Section 2.1, we have
$\Aut(M_3)=\Aut(M,C,\leq)<\Aut(M,C)=\Aut(M_4)<\Aut(M,D)=\Aut(M_6)$ -- see the proofs of Propositions~\ref{exist}, \ref{ex4hyp}, and \ref{6hyp}. By the main theorem of \cite{bodirsky}, $M_6$ has no proper reducts other than the `trivial' one with automorphism group $\Sym(M_6)$. %So far as we know the following is open. It seems interesting for the reasons discussed before Lemma~\ref{localorder}.
\end{remark}

%\begin{problem} 
%Show that the 3-hypergraph $N_3$ has no proper non-trivial   reducts other than the separation relation from Section 2.1, and classify the reducts of the universal homogeneous local order. 
%\end{problem}

It seems quite possible that for $k\geq 3$ we have found all infinite set-homogeneous but not homogeneous $k$-hypergraphs. We have checked that no other examples have  the same automorphism group as the universal homogeneous $t$-branching $C$ or $D$-relation for any $t$, and closely related structures (the most generic semilinear orders, and general betweenness relations, as developed in \cite{AN1}) also seem not to give examples. Likewise, there are no examples (other than $M_3$) obtained by expanding the above $C$-structures by a compatible total order. There is an analogous notion of compatibility of a $D$-relation with a circular order, but  this gives no examples. There are other related structures considered in \cite{cameron-tree} which we have not checked. There also appear to be obstructions to crude amalgamation arguments designed to build a homogeneous expansion of a set-homogeneous hypergraph in which we aim to force that (for the hypergraph) the group induced on some finite subset is smaller than the full automorphism group of the induced subhypergraph. 

\begin{problem} Classify {\em finite} set-homogeneous $k$-hypergraphs for $k\geq 3$. 
\end{problem}

%\begin{problem} As noted in \cite{cameron4}, the automorphism group of the 3-branching $D$-relation used in \ref{6hyp} has two orbits on 7-sets. These give invariant 7-hypergraphs. Find the set-homogeneity properties of these. **We should perhaps answer this in the paper.**
%\end{problem}
The next question was also asked in Remark 2 of \cite[p.91]{droste+}.

\begin{problem}\label{homogenizable} Give an example of a set-homogeneous countably infinite structure over a finite relational language which is not homogenizable, that is, cannot be made homogeneous by adding symbols for finitely many $\emptyset$-definable relations.
\end{problem}

Following on from \cite{mac}, which proves the corresponding result assuming homogeneity, we pose the following. 

\begin{problem} Show that if $M$ is set-homogeneous over a finite relational language then no infinite group is interpretable in $M$. 
\end{problem}

A complete theory $T$ is {\em stable} if there do not exist a formula $ \phi(\bar{x},\bar{y})$, $M\models T$, and $\bar{a}_i \in M^{|\bar{x}|}$ and $\bar{b}_i\in M^{|\bar{y}|}$ for $i \in \omega$, such that for $i,j\in \omega$ we have $M\models \phi(\bar{a}_i,\bar{b}_j) \Leftrightarrow i<j$. For more on this major theme in model theory see e.g. \cite{tent}. 

\begin{conjecture} For $k\geq 3$, any infinite set-homogeneous $k$-hypergraph with stable theory is complete or has complete complement.
\end{conjecture}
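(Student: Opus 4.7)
The plan is to combine $\omega$-categorical stability theory with the rigidity imposed by set-homogeneity on $k$-ary relations when $k\geq 3$. Suppose for contradiction that $(M,E)$ is an infinite set-homogeneous $k$-hypergraph, $k\geq 3$, with stable theory, and with both an edge and a non-edge. Since $M$ is set-homogeneous over a finite relational language, $M$ is $\omega$-categorical; together with stability this gives $\omega$-stability, hence finite Morley rank. By the Cherlin–Harrington–Lachlan theorem \cite{ch}, $M$ is smoothly approximable, and every $\emptyset$-definable relation on $M$ is controlled by a Lie coordinatisation built from binary geometries.

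The second step exploits the hypothesis $k\geq 3$ to extract strong transitivity. For any $t$ with $2\leq t\leq k-1$, no $t$-subset of $M$ carries an edge, so any two $t$-subsets are isomorphic as induced subhypergraphs; set-homogeneity at level $t$ therefore gives that $\Aut(M)$ is $t$-homogeneous. In particular, $\Aut(M)$ is $(k-1)$-homogeneous. Stability rules out $\Aut(M)$-invariant linear orders, circular orders, linear betweenness relations, and separation relations, since each of these has the order property. Cascading Cameron/MacDermott (Theorem~\ref{4hom}) down from $t=k-1$, the only $t$-homogeneous non-$t$-transitive possibilities all preserve one of these excluded relations; thus $\Aut(M)$ is in fact $(k-1)$-transitive.

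The third step classifies the underlying geometry. A stable $\omega$-categorical structure whose automorphism group is $(k-1)$-transitive with $k\geq 3$ is, by CHL and the description of smoothly approximable primitive structures, either the pure infinite set (with $\Aut(M)=\Sym(M)$) or based on a Lie geometry of projective/affine type over a finite field. In the first case, any $\Aut(M)$-invariant $k$-ary symmetric relation is a union of $\Sym(M)$-orbits on $k$-subsets, of which there is exactly one, forcing $E$ to be either complete or empty, contrary to assumption. In the Lie-geometric case, one must argue that no non-trivial set-homogeneous $k$-ary relation survives: one uses that the $k$-ary edge relation factors through the finitely many $\Aut(M)$-orbits on $k$-subsets (described in terms of affine or projective dependence data) and that a genuinely non-trivial such $E$ compatible with set-homogeneity would produce, via the Lie coordinatisation, an indiscernible sequence $\bar a_i$ and a parameter tuple $\bar b$ witnessing the order property for the formula expressing membership in $E$.

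The main obstacle is this last step: ruling out non-trivial set-homogeneous symmetric $k$-ary relations on Lie geometries for all $k\geq 3$. Several things make this step delicate. First, on most Lie geometries the automorphism group is only 2-transitive (e.g.\ on a projective space of dimension $\geq 2$), so $(k-1)$-transitivity for $k\geq 4$ already excludes those cases, but the combinatorics for $k=3$ on projective/affine spaces over $\mathbb{F}_2$ must be handled by hand. Second, the reduction to the order property has to be made precise: one would like to fix a generic hyperplane and choose an indiscernible sequence of points whose pattern of edge/non-edge with a witness tuple increases in the ambient dimension, exhibiting instability. A cleaner alternative might bypass the full CHL machinery by using the methods of Lemmas~\ref{5verts} and \ref{PkJ} directly, together with the observation from Step~2 that set-homogeneity plus $k\geq 3$ forces $(k-1)$-transitivity under stability, to show combinatorially that the edge density on $(k+1)$-sets must be $0$ or $\binom{k+1}{k}$.
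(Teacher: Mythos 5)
First, a point of context: the statement you are proving appears in the paper as a \emph{conjecture}, not a theorem. The paper offers no proof of it, only a sketch showing that it would follow from a positive answer to Problem~\ref{homogenizable} (that every set-homogeneous structure over a finite relational language is homogenizable). So there is no proof in the paper to match your attempt against, and your proposal has to stand on its own; it does not.

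The central gap is in your first step. From $\omega$-categoricity plus stability you infer $\omega$-stability and finite Morley rank and then invoke Cherlin--Harrington--Lachlan. But whether every stable $\omega$-categorical theory is $\omega$-stable (indeed, even superstable) is itself a long-standing open problem, and \cite{ch}-style coordinatisation requires $\omega$-stability or smooth approximability as a hypothesis --- neither of which you have established for a merely set-homogeneous $M$. This is exactly why the paper's conditional argument routes through homogenizability: for a structure that is \emph{homogeneous} over a finite relational language, Lachlan's theory (\cite{lachlan-old}, \cite{cherlach}) derives smooth approximability directly from stability, with no detour through $\omega$-stability. Without first expanding $M$ to a homogeneous structure with the same automorphism group --- which is Problem~\ref{homogenizable}, open --- none of this machinery is available to you.

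Two further problems. Your transitivity cascade via Theorem~\ref{4hom} is fine in spirit (for $k=3$ the base case needs Lemma~\ref{linear} rather than $3$-homogeneity, since set-homogeneity only yields $t$-homogeneity for $t\leq k-1$), but Theorem~\ref{4hom} has no clause carrying you from $3$-transitive to $4$-transitive --- part (v) only applies from $k\geq 5$ --- so the asserted $(k-1)$-transitivity is unjustified once $k\geq 5$. And you concede yourself that the last step, excluding nontrivial invariant symmetric $k$-ary relations on the Lie geometries, is not carried out. For comparison, the paper's conditional argument finesses that step entirely: the finite structures smoothly approximating $M$ are themselves set-homogeneous $k$-hypergraphs, hence have $2$-transitive automorphism groups by Lemma~\ref{2trans}; the classification of finite $2$-transitive groups then forces an infinite group to be interpretable in the homogeneous expansion, contradicting the main theorem of \cite{mac}. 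That endgame is cleaner than a case analysis of geometries, but it too is only available under the homogenizability hypothesis. As written, your proposal is not a proof, and completing it along these lines would require resolving at least one currently open problem.
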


The corresponding result for stable {\em homogeneous} 3-hypergraphs holds, by \cite{lachlan}.
This conjecture would hold if there is no example as in Problem~\ref{homogenizable} for $k$-hypergraphs with $k\geq 3$. Indeed, suppose this is the case, and that $(M,E)$ is a set-homogeneous stable $k$-hypergraph with $k\geq 3$. Then $(M,E)$  can be expanded to a stable homogeneous structure $M'$ over a finite relational language $L'$, with the same automorphism group. By Lachlan's theory for such structures, $M$ will be smoothly approximated by a sequence of finite such structures $(M_i:i\in \mathbb{N})$ which by Theorem~\ref{2trans} will have 2-transitive automorphism group. By the classification of finite 2-transitive permutation groups, it follows  that an infinite group will be interpretable in $M'$, contrary to the main theorem of \cite{mac}.

We also repeat a problem from the introduction of \cite{droste+}, where it is asked whether the two examples $R(3)$ and its complement are the only countably infinite set-homogeneous but not homogeneous graphs.
\begin{problem} Classify countably-infinite set-homogeneous graphs.
\end{problem}

\end{document}